\DeclareMathOperator{\Span}{\textrm{span}}
\begin{document}

\title{$n$-supercyclic and strongly $n$-supercyclic operators in finite dimensions}
\author{ERNST Romuald}
\date{}

\begin{abstract}
We prove that on $\mathbb{R}^N$, there is no $n$-supercyclic operator with $1\leq n< \lfloor \frac{N+1}{2}\rfloor$ i.e. if $\mathbb{R}^N$ has an $n$-dimensional subspace whose orbit under $T\in\mathcal{L}(\mathbb{R}^N)$ is dense in $\mathbb{R}^N$, then $n$ is greater than $\lfloor\frac{N+1}{2}\rfloor$. Moreover, this value is optimal.
We then consider the case of strongly $n$-supercyclic operators. An operator $T\in\mathcal{L}(\mathbb{R}^N)$ is strongly $n$-supercyclic if $\mathbb{R}^N$ has an $n$-dimensional subspace whose orbit under $T$ is dense in $\mathbb{P}_n(\mathbb{R}^N)$, the $n$-th Grassmannian. We prove that strong $n$-supercyclicity does not occur non-trivially in finite dimension.
\end{abstract}

\maketitle

\theoremstyle{plain}
\newtheorem{theo}{Theorem }[section]
\newtheorem{lem}[theo]{Lemma}
\newtheorem{cor}[theo]{Corollary}
\newtheorem{prop}[theo]{Proposition}

\theoremstyle{definition}
\newtheorem{defi}[theo]{Definition}
\newtheorem{rem}[theo]{Remark}

\newtheorem{exe}[theo]{Example}
\newtheorem{cexe}[theo]{Counterexample}
\newtheorem*{nota}{Notation}
\newtheorem*{quest}{Question}

% \newenvironment{preuve}
% {\par\noindent\textbf{Proof:}\par}
% {\begin{flushright} $\Box$\end{flushright}}
% \newenvironment{Theo}[1] 
% {~\\\par\noindent\textbf{Theorem:(#1)}\par\itshape}% 
% {\\\par}
% 
% \newenvironment{Cor}[1]% 
% {~\\\par\noindent\textbf{Corollary:(#1)}\par\itshape}% 
% {\\\par} 
% \newenvironment{Prop}[1]% 
% {~\\\par\noindent\textbf{Proposition:(#1)}\par\itshape}% 
% {\\\par}
% \newenvironment{Preuve}[1]% 
% {\par\noindent\textbf{Proof of #1}\par}% 
% {\begin{flushright} $\Box$\end{flushright}}

\newtheorem*{thmbfs}{Theorem Bourdon, Feldman, Shapiro}
\newtheorem*{thmf}{Theorem Feldman}
\newtheorem*{thmme}{Theorem}
\newtheorem*{thmme1}{Theorem 1}
\newtheorem*{thmme2}{Theorem 2}
\newtheorem*{preuvetheo}{\textit{Proof of Proposition \ref{propfortedualite}}}
\newtheorem*{propfrom}{Proposition (Proposition 1.13 \cite{Ernststrongnsupop})}

\newcommand{\R}{\mathbb R}
\newcommand{\Z}{\mathbb Z} 
\newcommand{\N}{\mathbb N}
\newcommand{\Q}{\mathbb Q}
\newcommand{\C}{\mathbb C}
\newcommand{\K}{\mathbb K}
\newcommand{\Pn}{\mathbb P}
\newcommand{\HC}{\mathcal{H}\mathcal{C}}
\newcommand{\SC}{\mathcal{S}\mathcal{C}}
\newcommand{\Oc}{\mathcal{O}}
\newcommand{\ES}{\mathcal{E}\mathcal{S}}
\newcommand{\epsi}{\varepsilon}

Let $T$ be a continuous linear operator on a Banach space $X$.
The orbit of a set $E\in X$ under $T$ is defined by $$\Oc(E,T):=\cup_{n\in\Z_{+}}T^{n}(E).$$
Many authors have already studied some density properties of such orbits for different original sets $E$.
If $E$ is a singleton and $\Oc(E,T)$ is dense in $X$, then $T$ is said to be hypercyclic. Hypercyclicity has been first studied by Birkhoff in 1929 and has been a subject of great interest during the last twenty years, see \cite{Bay} and \cite{Grope} for a survey on hypercyclicity. Later, in 1974, Hilden and Wallen \cite{Hilwal} worked on a different set $E=\K x$ which is a one-dimensional subspace of $X$, and if $\Oc(E,T)$ is dense in $X$, then $T$ is said to be supercyclic.
Several generalisations of supercyclicity were proposed since like the one introduced by Feldman \cite{Felnsupop} in 2002. Rather than considering orbits of lines, Feldman defines an $n$-supercyclic operator as being an operator for which there exists an $n$-dimensional subspace $E$ such that $\Oc(E,T)$ is dense in $X$.
This notion has been mainly studied in \cite{Bayhyp}, \cite{Bou} and \cite{Felnsupop}.
In 2004, Bourdon, Feldman and Shapiro proved in the complex case that non-trivial $n$-supercyclicity is purely infinite dimensional:
\begin{thmbfs}
 Let $N\geq2$. Then there is no $(N-1)$-supercyclic operator on $\C^N$. In particular, there is no $n$-supercyclic operator on $\C^N$ for $1\leq n\leq N-1$.
\end{thmbfs}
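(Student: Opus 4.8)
The plan is to derive the statement from its first assertion, that $\C^N$ carries no $(N-1)$-supercyclic operator for $N\ge 2$: the ``in particular'' clause is then immediate, because any $n$-dimensional subspace with $1\le n\le N-1$ is contained in an $(N-1)$-dimensional one, whose orbit is therefore also dense (contradicting the main assertion). So assume $T\in\mathcal L(\C^N)$ admits an $(N-1)$-dimensional subspace $E$ with $\Oc(E,T)$ dense in $\C^N$. First I would observe that $T$ must be invertible: otherwise $\operatorname{ran}T$ is a proper subspace and $\Oc(E,T)\subseteq E\cup\operatorname{ran}T$, a union of two proper subspaces, hence a closed set whose complement has nonempty interior --- so not dense in $\C^N$.

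Next I would pass to the dual (Grassmannian) picture. Writing $E=\ker\phi$ with $0\ne\phi\in(\C^N)^*$, invertibility gives $T^k(E)=\ker(\phi\circ T^{-k})=\ker(A^k\phi)$, where $A\colon(\C^N)^*\to(\C^N)^*$ is the inverse transpose of $T$; hence $\Oc(E,T)=\bigcup_{k\ge 0}\ker(A^k\phi)$, a countable union of hyperplanes. A short topological lemma --- using that $[\psi]\mapsto\ker\psi$ is a homeomorphism of $\mathbb P((\C^N)^*)$ onto $\operatorname{Gr}(N-1,\C^N)$ --- shows that $\overline{\bigcup_k\ker\psi_k}=\bigcup_{[\psi]\in C}\ker\psi$, where $C:=\overline{\{[\psi_k]:k\ge 0\}}$. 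Applied with $\psi_k=A^k\phi$, this turns the density of $\Oc(E,T)$ into the assertion that the forward orbit closure $C=\overline{\{[A^k\phi]:k\ge 0\}}$ in $\mathbb P((\C^N)^*)\cong\mathbb P^{N-1}(\C)$ \emph{meets every hyperplane}. Everything then reduces to proving this impossible, i.e.\ to the purely linear-dynamical

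\emph{Claim.} For every $B\in GL_N(\C)$ and every $v\ne 0$, the forward orbit closure $C:=\overline{\{[B^kv]:k\ge 0\}}\subseteq\mathbb P^{N-1}(\C)$ misses some hyperplane.

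To prove the Claim I would write $C=\Gamma\cup L$ with $\Gamma=\{[B^kv]:k\ge 0\}$ countable and $L$ the set of accumulation points of $([B^kv])_{k\to\infty}$, and pin down $L$ by leading-order asymptotics. Decomposing $\C^N$ into generalized eigenspaces of $B$, writing $v=\sum_\mu v_\mu$ and $B|_{V_\mu}=\mu I+N_\mu$ with $N_\mu$ nilpotent, set $\rho=\max\{|\mu|:v_\mu\ne 0\}$, let $J=\{\mu:|\mu|=\rho,\ v_\mu\ne 0\}$, let $d=\max_{\mu\in J}d_\mu$ where $d_\mu=\max\{j:N_\mu^jv_\mu\ne 0\}$, and $J'=\{\mu\in J:d_\mu=d\}$. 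A binomial expansion of $B^k$ on each $V_\mu$ gives, as $k\to\infty$,
\[
\frac{d!}{k^{d}\,\rho^{\,k-d}}\,B^kv \;=\; \sum_{\mu\in J'}(\mu/\rho)^{\,k-d}\,w_\mu \;+\; O(1/k),\qquad w_\mu:=N_\mu^{\,d}v_\mu ,
\]
and each $w_\mu$ is a genuine $\mu$-eigenvector of $B$, so the $w_\mu$ $(\mu\in J')$ are linearly independent. Since $(z_\mu)\mapsto\big\|\sum_{\mu\in J'}z_\mu w_\mu\big\|$ is continuous and strictly positive on the torus $\{(z_\mu):|z_\mu|=1\ \forall\mu\}$, hence bounded below, a subsequence argument shows that every point of $L$ has the form $\big[\sum_{\mu\in J'}z_\mu w_\mu\big]$ with all $|z_\mu|=1$. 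Fixing $\mu_0\in J'$ and the functional $\psi_0$ reading off the $w_{\mu_0}$-coordinate (relative to a basis extending $\{w_\mu:\mu\in J'\}$), the hyperplane $H_0=[\ker\psi_0]$ is disjoint from $L$. Finally I would upgrade ``misses $L$'' to ``misses $C$'' by Baire category in the compact metric space $\mathbb P((\C^N)^*)$: the set $\{[\psi]:[\ker\psi]\cap L=\emptyset\}$ is open and nonempty (it contains $[\psi_0]$, and $L$ is compact), whereas $\bigcup_k\{[\psi]:\psi(B^kv)=0\}$ is a countable union of proper projective subspaces, hence meager; any $[\psi]$ in the former but not the latter gives a hyperplane avoiding both $L$ and $\Gamma$, i.e.\ avoiding $C$ --- contradicting that $C$ meets every hyperplane. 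This would complete the proof.

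\textbf{Main obstacle.} The two reductions, the topological lemma on closures of unions of hyperplanes, and the Baire step are all routine. The delicate point is the asymptotic analysis of $B^kv$ when $B$ has several eigenvalues of maximal modulus, possibly living in nontrivial Jordan blocks: one must check that the displayed leading term really dominates uniformly and, above all, that the surviving leading eigenvectors $w_\mu$ are linearly independent, so that the $w_{\mu_0}$-coordinate is a well-defined nonvanishing function on $L$ and the separating hyperplane $H_0$ genuinely exists. This is precisely where the finite-dimensional complex structure (Jordan decomposition; independence of eigenvectors for distinct eigenvalues) enters in an essential way.
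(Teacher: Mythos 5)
The paper does not prove this theorem; it quotes it from Bourdon, Feldman and Shapiro \cite{Bou} and uses it as a known result, so there is no internal proof here to compare against.

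Your proposal is nevertheless a correct, self-contained proof. The reduction to $n=N-1$ and the invertibility of $T$ are fine (a union of two proper subspaces of $\C^N$ cannot be dense), the dualisation $T^k(\ker\phi)=\ker(A^k\phi)$ with $A=(T^{-1})^{*}$ and the topological lemma correctly turn density of the orbit into ``the projectivised orbit closure $C$ meets every hyperplane,'' the asymptotic $\frac{d!}{k^d\rho^{k-d}}B^kv=\sum_{\mu\in J'}(\mu/\rho)^{k-d}w_\mu+O(1/k)$ is right, the surviving $w_\mu=N_\mu^dv_\mu$ are genuine eigenvectors for distinct eigenvalues and hence linearly independent, so $L$ is compact and misses $[\ker\psi_0]$, and the closing Baire argument legitimately upgrades this to a hyperplane missing all of $C=\Gamma\cup L$, the required contradiction. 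In spirit your argument is a compressed complex analogue of what the paper does at length in Section~2 for $\R^N$: decompose along the Jordan form, compare polynomial-times-geometric growth rates across the blocks (your $\binom{k}{j}\mu^{k-j}$ play the role of the paper's $\Delta_j(n_i)$ and $n_i^{q}$ weights in Theorem~\ref{theoreductiondebase} and Lemmas~\ref{lemcomb}--\ref{lemconrec2}), and show the dominant contribution is confined to a proper subspace; the complex setting shortens this considerably since all Jordan blocks are of classical type, eigenvectors for distinct eigenvalues are automatically independent, and the projective-dual picture collapses the endgame into a single Baire step. This is a genuinely different and more elementary route than the Banach-space spectral argument of \cite{Bou}.
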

The last theorem extends a result proved by Hilden and Wallen for supercyclic operators in the complex setting. On the other hand, Herzog proved that there is no supercyclic operators on $\R^n$ for $n\geq3$ in \cite{Herzog}. Therefore, it is natural to ask the question of the existence of $n$-supercyclic operators in the real setting.

In 2008, Shkarin introduced another generalisation of supercyclicity in \cite{Shkauniv}. Roughly speaking, an operator $T\in\mathcal{L}(X)$ is strongly $n$-supercyclic if there exists a subspace of dimension $n$ whose orbit is dense in the set of $n$-dimensional subspaces of $X$. To be more precise, we need to define the topology of this set, which is called the $n$-th-Grassmannian of $X$.
If $\dim(X)\geq n$ then one may define a topology on the $n$-th Grassmannian. To do this, let us consider the open subset $X_{n}$ of all linearly independent $n$-tuples with the topology induced from $X^{n}$ and let $\pi_{n}:X_{n}\to\Pn_{n}(X)$ be defined by $\pi_{n}(x)=\Span\{x_{1},\ldots,x_{n}\}$.
The topology on $\Pn_n(X)$ is the coarsest topology for which the map $\pi_n$ is open and continuous.
Let us now turn to the definition of strong $n$-supercyclicity: $M\in\Pn_{n}(X)$ is a strongly $n$-supercyclic subspace for $T$ if every $T^{k}(M)$ is $n$-dimensional and if $\{T^{k}(M),k\in\Z_{+}\}$ is dense in $\Pn_{n}(X)$.
If such a subspace exists, then $T$ is said to be strongly $n$-supercyclic. We denote by $\ES_n(T)$ the set of strongly $n$-supercyclic subspaces for an operator $T$.

An open question regarding $n$-supercyclic operators is to know whether they satisfy the Ansari property: is it true that $T^p$ is $n$-supercyclic for any $p\geq2$ provided $T$ itself is $n$-supercyclic?
Shkarin \cite{Shkauniv} has shown that strongly $n$-supercyclic operators do satisfy the Ansari property and he asks if $n$-supercyclicity and strong $n$-supercyclicity are equivalent. Indeed, this would solve the Ansari problem for $n$-supercyclic operators.
Unfortunately, Shkarin did not go further in the study of strongly $n$-supercyclic operators. A study of general properties of strongly $n$-supercyclic operators can be found in \cite{Ernststrongnsupop}.

In this paper, we study in details $n$-supercyclicity and strong $n$-supercyclicity in finite dimensional spaces. Of course, by the results of Bourdon, Feldman and Shapiro, we need only to concentrate on the real Banach spaces case.
In particular, in section 2 we are going to prove the following theorem:
\begin{thmme1}
 Let $N\geq2$. There is no $(\lfloor \frac{N+1}{2}\rfloor-1)$-supercyclic operators on $\R^N$. Moreover there exist $(\lfloor \frac{N+1}{2}\rfloor)$-supercyclic operators on $\R^N$.
\end{thmme1}
This theorem generalises Hilden and Wallen's and Herzog's results and is optimal for these operators. Actually, it is not difficult to prove that there exists an operator which is $k$-supercyclic but not $(k-1)$-supercyclic on $\R^N$ for every $\lfloor \frac{N+1}{2}\rfloor\leq k\leq N$.
The proof of Theorem 1 is not easy and one needs to get familiar with specific notations to fully understand it. The proof is progressing by steps from simplest matrices, which will be called primary, to general ones.

Then, in Section 3, we completely solve the question of the existence of non-trivial strongly $n$-supercyclic operators in finite dimensional vector spaces.
In fact, we prove:
\begin{thmme2}
 For $N\geq3$, there is no strongly $n$-supercyclic operator on $\R^{N}$ for $1\leq n< N$.
\end{thmme2}
This result puts an end to the study of strong $n$-supercyclicity in finite dimension. 
This proves, in particular, that there exist $n$-supercyclic operators that are not strongly $n$-supercyclic and answers the question of the equivalence between $n$-supercyclicity and strong $n$-supercyclicity raised in \cite{Shkauniv}.
The interested reader shall refer to \cite{Ernststrongnsupop} for other properties on strongly $n$-supercyclic operators in the infinite dimensional spaces setting.

\section{Preliminaries}

It has been known for years that in the real setting, supercyclic operators are completely characterised and they only appear on $\R$ or $\R^2$. Moreover, on $\R^2$, if $\pi$ and $\theta$ are linearly independent over $\Q$, then $R_\theta$, the rotation with angle $\theta$, is supercyclic.
Building on this, one may easily see that any rotation on $\R^3$ around any one-dimensional subspace and with angle linearly independent with $\pi$ over $\Q$ is 2-supercyclic. This simple example proves that the real setting is completely different from the complex one and gives hope in finding similar examples in higher dimensions.
It seems clear that rotations are making the difference between the real case and the complex case.
The next part is devoted to the Jordan real decomposition and highlights the role played by rotations in the real setting.

\begin{center}
 \textbf{Jordan decomposition}
\end{center}

In the complex setting, it is common to use the Jordan decomposition to obtain a matrix similar to $T$ but with a better ``shape''. Bourdon, Feldman and Shapiro took advantage of this decomposition to prove that there is no $(N-1)$-supercyclic operator on $\C^N$.
Recall that a Jordan block with eigenvalue $\mu$ and of size $k$ is usually a $k\times k$ matrix with $\mu$ along the main diagonal, ones on the first super-diagonal and zeros everywhere else.
For convenience, all along this paper we follow another convention which improves slightly the notations but does not change the efficiency of this decomposition. Thus, in our convention, a classical Jordan block with eigenvalue $\mu$ and of size $k$ will be a $k\times k$ matrix with $\mu$ along the main diagonal and along the first super-diagonal and zeros elsewhere.

This well-known decomposition for complex matrices cannot be applied without changes to the case of real matrices because of the existence of complex eigenvalues.
However, there also exists a real version of the Jordan decomposition which is an improvement of the last one. In the real case, every matrix is similar to a direct sum of classical Jordan blocks and real Jordan blocks, where a real Jordan block of modulus $\mu$ and of size $k$ is usually a $2k\times 2k$ matrix with $\mu R_\theta$ along the main diagonal, identity matrices along the first super-diagonal and zeros elsewhere. For the same reasons, our convention is different and for us the terms along the first super-diagonal are the same that those on the main diagonal i.e. $\mu R_\theta$.

Let $\mathcal{B}$ be a classical (respectively real) Jordan block with eigenvalue (respectively modulus) $\mu$ and of size $k$ and let $\mathcal{A}=\mu$ (respectively $\mathcal{A}=\mu R_\theta$). Then, powers of $\mathcal{B}$ are simple to compute.
Indeed, for all $n\in\N$,
$$\mathcal{B}^n=\left(\begin{array}{cccccc}
\mathcal{A}^n&\binom{n}{1}\mathcal{A}^n&\binom{n}{2}\mathcal{A}^n&\cdots&\binom{n}{k-1}\mathcal{A}^n\\
0&\mathcal{A}^n&\binom{n}{1}\mathcal{A}^n&\binom{n}{2}\mathcal{A}^n\cdots&\binom{n}{k-2}\mathcal{A}^n\\
0&0&\ddots&\ddots&\vdots\\
\vdots&&\ddots&\ddots&\binom{n}{1}\mathcal{A}^n\\
0&\cdots&0&0&\mathcal{A}^n\\
\end{array}
\right).$$
Due to the fact that we are going to use repeatedly the Jordan decomposition, in both real and complex cases, we use the term modulus instead of eigenvalue. If the reader wishes more informations on the Jordan decomposition see \cite{Hirsmal} or \cite{Mne} for a good review.

All along this paper, we will be interested in studying dynamical properties of such matrices. Consequently, we assume here for the whole paper that when we consider a Jordan block, its modulus is supposed to be non-zero.
To summarise, every operator on $\R^N$ is similar to one with the following shape:

$$\left(\begin{array}{ccccccccc}
\cline{1-1}
\multicolumn{1}{|c|}{J_{1}}&0&\cdots&\cdots&\cdots&0 \\ \cline{1-1}
0 &\ddots&0&\cdots&\cdots&0 \\ \cline{3-3}
0 &0&\multicolumn{1}{|c|}{J_{q}}&0&\cdots&0 \\ \cline{3-4}
0&\cdots&0&\multicolumn{1}{|c|}{\mathcal{J}_{1}}&0&0 \\ \cline{4-4}
0&\cdots&\cdots&0&\ddots&0 \\ \cline{6-6}
0&0&0&0&0&\multicolumn{1}{|c|}{\mathcal{J}_{r}} \\ \cline{6-6}
\end{array}\right)$$
where $J_{i}$ are classical Jordan blocks
$$J_{i}=\left(\begin{array}{ccccc}
\mu_{i}   &\mu_{i}&0  &0 \\
0&\ddots &\ddots&\cdots\\
\vdots&0&\ddots&\mu_{i} \\
0 & \cdots      &0&\mu_{i} \\
\end{array}\right)$$
and $\mathcal{J}_{i}$ are real Jordan blocks
$$\mathcal{J}_{i}=\left(\begin{array}{cccc}
\multicolumn{1}{c|}{\lambda_iR_{\theta_{i}}}   &\multicolumn{1}{|c|}{\lambda_iR_{\theta_{i}}}&0  &0 \\ \cline{1-2}
0&\ddots &\ddots&\cdots\\ \cline{4-4}
\vdots&0&\ddots&\multicolumn{1}{|c}{\lambda_iR_{\theta_{i}}} \\ \cline{4-4}
0 & \cdots      &0&\multicolumn{1}{|c}{\lambda_iR_{\theta_{i}}} \\
\end{array}\right).$$

\section{$n$-supercyclic operators on $\R^N$}

\subsection{Introduction}

Bourdon, Feldman and Shapiro showed that there are $n$-supercyclic operators on $\C^N$ if and only if $n=N$. This completely characterises $n$-supercyclic operators in the complex finite dimensional setting.
In this section, we are going to apply the real Jordan decomposition to determine for which $n\in\N$ there are $n$-supercyclic operators on $\R^N$.

Actually, the following examples reveal how to provide $(\lfloor \frac{N+1}{2}\rfloor)$-supercyclic operators on $\R^N$.
\begin{exe}\label{exensup}
For all $N\geq1$:

$\bullet$ On $\mathbb{R}^{2N}$, endomorphisms represented by matrices of the form
\begin{equation*}
\left(\begin{array}{cccc}
\multicolumn{1}{c|}{R_{\theta_{1}}}   &0 &\cdots  &0 \\ \cline{1-1}
  0                                   &\ddots &     0    & 0  \\ \cline{4-4}
\vdots & \cdots      & 0&\multicolumn{1}{|c}{R_{\theta_{N}}} \\
\end{array}\right)
\end{equation*}
 are $N$-supercyclic if (and only if) $\{\pi,\theta_{1},\ldots,\theta_{N}\}$ is a linearly independent family over $\mathbb{Q}$.

$\bullet$ On $\mathbb{R}^{2N+1}$, endomorphisms represented by matrices of the form
\begin{equation*}
\left(\begin{array}{ccccc}
\multicolumn{1}{c|}{R_{\theta_{1}}}   &0 &\cdots  &0&0 \\ \cline{1-1}
  0                                   &\ddots &     0    & 0&0  \\ \cline{4-5}
\vdots & \cdots      & 0&\multicolumn{1}{|c}{R_{\theta_{N}}}&0 \\
0&\cdots&0&\multicolumn{1}{|c}{0}&1\\
\end{array}\right)
\end{equation*}
are $(N+1)$-supercyclic if (and only if) $\{\pi,\theta_{1},\ldots,\theta_{N}\}$ is a linearly independent family over $\mathbb{Q}$.

The proof of this example relies on the fact that every rotation sub-matrix is supercyclic and the Kronecker density theorem \cite{Hardwright} permits to consider each one separately.
\end{exe}
These simple examples prove that our Theorem 1 is optimal. In the following, we are going to study $n$-supercyclic operators on $\R^N$ in order to prove Theorem 1. 
We progress step by step considering particular cases until we reach the remaining part of Theorem 1 in the general case.
We begin by proving two special cases: the case of a real Jordan block matrix of size 2 is considered first because it is the simplest matrix that Bourdon, Feldman and Shapiro have not checked in \cite{Bou} and then the case of a direct sum of rotation matrices because it permits to notice that something more is needed if one wants to go further.
These two results are stated and proved first because their proofs introduce some techniques involved for more general proofs.
Then, we will give a useful basis reduction which is of constant use all along the paper. From that point, our aim will be to find the best supercyclic constant for different types of matrices.
We will begin by primary matrices which are direct sums of unimodular real and complex Jordan blocs of size one and we will continue with the case of a single real Jordan block of arbitrary size.
After that, we discuss the best supercyclic constant for matrices being direct sums of Jordan blocks with pairwise different moduli and then for matrices being direct sums of Jordan blocks with the same modulus.
Finally, we gather these two last results in the last subsection to give a general result having Theorem 1 as a corollary.\\
Let us begin with a real Jordan block of size 2.

\begin{prop}\label{propjordpas2}
$ T=\left(\begin{array}{cc}
        R_{\theta}&R_{\theta}\\
	0&R_{\theta}\\
       \end{array}\right)$ is not 2-supercyclic on $\R^{4}$.
\end{prop}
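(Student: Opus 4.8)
The operator acts on $\R^4 = \C^2$ (with $\C$ acting as $R_\theta$), so write a generic vector as $(z,w)$ with $z,w\in\C\simeq\R^2$ under the identification where multiplication by $e^{i\theta}$ is $R_\theta$. Then $T(z,w) = (z\cdot e^{i\theta} + w\cdot e^{i\theta}, w\cdot e^{i\theta})$, and the power formula from the preliminaries gives $T^n(z,w) = \big(e^{in\theta}(z+nw),\ e^{in\theta}w\big)$. So the orbit of a $2$-dimensional real subspace $E$ under $T$ is $\bigcup_n T^n(E)$, and I want to show this cannot be dense in $\R^4$.

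The first step is to reduce the problem: a $2$-dimensional real subspace $E\subset\C^2$ is the real span of two vectors; consider the cases according to how $E$ meets the "second coordinate" structure. The key observation is the scaling/rotation symmetry: $T$ commutes with the real-linear maps $(z,w)\mapsto(e^{i\varphi}z, e^{i\varphi}w)$ and $(z,w)\mapsto(tz,tw)$ for $t>0$, so density of $\Oc(E,T)$ is invariant under replacing $E$ by $(e^{i\varphi}t)E$. Using this and a suitable choice of basis of $E$ (the basis reduction alluded to in the introduction, or an ad hoc version here), I would normalise $E$ to a short list of canonical forms. Generically $E$ projects isomorphically onto the $w$-plane; then $E = \{(Aw, w): w\in\C\}$ for some $\R$-linear $A:\C\to\C$, and $T^n(E) = \{(e^{in\theta}(A+nI)w, e^{in\theta}w): w\in\C\}$, which is again a graph over the $w$-plane: $E_n = \{(e^{in\theta}(A+nI)e^{-in\theta}w', w'): w'\in\C\}$, i.e. the graph of $B_n := R_{n\theta}(A+nI)R_{-n\theta}$.

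The crux of the argument is then to show that the union of the graphs of the $B_n$ cannot be dense in $\C^2=\R^4$. A point $(u,w)$ with $w\neq0$ lies in the orbit iff $B_n w = u$ for some $n$. Fixing $w$ on the unit circle, the set of attainable $u$ is $\{B_n w : n\in\N\}$; as $n\to\infty$, $B_n = R_{n\theta}A R_{-n\theta} + nI$, so $B_n w = R_{n\theta}A R_{-n\theta} w + n w$. Since $R_{n\theta}AR_{-n\theta}$ stays bounded (it ranges over a compact set of matrices, as $R_{n\theta}$ lives in the compact group $SO(2)$), $B_n w$ is $nw + O(1)$, hence $B_n w$ escapes to infinity essentially along the ray $\R_+ w$, staying within bounded distance of that ray. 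Thus for fixed $w$ the attainable $u$'s lie in a bounded neighbourhood of the line $\R w$ together with finitely many small-$n$ exceptions — they miss, e.g., all $u$ with $\mathrm{dist}(u,\R w)$ large and $|u|$ large. More carefully: choose a target point $(u_0,w_0)$ with $w_0\neq 0$ and $u_0$ very far off the line $\R w_0$; then for any $(u,w)$ in the orbit close to $(u_0,w_0)$, $w$ is close to $w_0$ so $\R w$ is close to $\R w_0$, forcing $u$ close to a ray on which $\mathrm{dist}(\cdot,\R w_0)$ is controlled, contradicting $u\approx u_0$ unless $n$ is bounded; and boundedly many $E_n$ are each a proper subspace, so their union is nowhere dense. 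This handles the generic case; the remaining degenerate configurations of $E$ (where $E$ contains a vector of the form $(z,0)$, or projects to a line, or lies in $\C\times\{0\}$) are lower-dimensional and easier — in each, $\Oc(E,T)$ is contained in a countable union of proper affine-like subsets whose closure has empty interior, by a direct inspection of the power formula.

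The main obstacle I anticipate is making the "escape along the ray $\R w$, bounded transverse deviation" estimate fully rigorous and uniform enough to cover the approach to an arbitrary target: one must rule out that the bounded fluctuation term $R_{n\theta}AR_{-n\theta}w$, combined with letting $w$ vary over a small neighbourhood of $w_0$, somehow sweeps out a full neighbourhood in $\R^4$. The resolution is dimension counting: each $E_n$ is $2$-dimensional in $\R^4$, and although $\bigcup_n E_n$ involves infinitely many of them, the large-$n$ ones are trapped in the region $\{|u|\geq c|w|n/2\}$ near the ray structure, so any fixed bounded open set meets only finitely many $E_n$, and a finite union of $2$-planes is nowhere dense in $\R^4$. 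Formalising "meets only finitely many" via the lower bound $|B_n w|\geq n|w| - C$ is the technical heart, and the techniques introduced here (the graph representation, conjugation by $R_{n\theta}$, and the compactness of $SO(2)$) are exactly what the introduction promises will recur.
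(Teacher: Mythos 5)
Your treatment of the generic case — $E$ a graph $\{(Aw,w):w\in\C\}$ over the $w$-plane — is correct and in fact cleaner than the paper's: writing $B_n=R_{n\theta}AR_{-n\theta}+nI$ and observing $\mathrm{dist}(B_nw,\R w)\le\Vert A\Vert\,\vert w\vert$ uniformly in $n$ traps the whole orbit in the closed cone $\{(u,w):\mathrm{dist}(u,\R w)\le\Vert A\Vert\,\vert w\vert\}$, whose complement is open and nonempty. (You do not even need the "finitely many $E_n$ meet a ball" refinement in this case; the bound is uniform in $n$.) The paper instead fixes a ball $V$ around $(1,0)$, bounds $\lambda_{n_i},\mu_{n_i}$, divides by $n_i$, and extracts a contradiction; your reformulation packages the same phenomenon into one geometric statement.

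The gap is in the degenerate case, and it is a real one, not a bookkeeping remark. When $E$ contains a nonzero $(z_2,0)$, you assert that $\Oc(E,T)$ is "contained in a countable union of proper affine-like subsets whose closure has empty interior, by a direct inspection of the power formula." That statement is circular: a countable union of $2$-planes in $\R^4$ can perfectly well be dense — that is exactly what $2$-supercyclicity would produce — so "proper and countably many" is no reason for empty interior. The paper's proof devotes most of its length precisely to this configuration ($x=(x_1,x_2,x_3,x_4)$, $y=(y_1,y_2,0,0)$), and for good reason: $E$ is no longer a graph, so your cone bound does not directly apply.

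Concretely, write $E=\Span_\R\{(z_1,w_1),(z_2,0)\}$ with $w_1\neq0$, $z_2\neq0$, and split on whether $z_2\in\R w_1$. If $z_2=cw_1$ with $c\in\R$, eliminate the free parameter: a point of $T^nE$ with $w$-component $w=e^{in\theta}\lambda w_1\neq0$ has $u=e^{in\theta}\lambda z_1+sw$ for some $s\in\R$, so $\mathrm{dist}(u,\R w)\le\vert z_1\vert\,\vert w\vert/\vert w_1\vert$ again, and the cone argument closes this subcase. If $z_2\notin\R w_1$, the transverse distance is no longer controlled (the free coefficient $\mu$ sweeps the whole line $\R e^{in\theta}z_2$), and a different argument is required; one option is to note that if $(u,w)\in T^nE$ lies in a fixed ball around $(u_0,w_0)$ with $w_0\neq0$, then $\vert\lambda\vert$ is bounded away from $0$ while $\vert\mu z_2+n\lambda w_1\vert\geq c\max(\vert\mu\vert,n\vert\lambda\vert)$ by $\R$-linear independence of $\{z_2,w_1\}$, forcing $n$ to be bounded, so the ball meets only finitely many $T^nE$ and hence contains a smaller ball missing the orbit. (The paper's route is to show this subcase is outright incompatible with $2$-supercyclicity, by extracting the limit $t=\lim\mu_{n_i}/(n_i\lambda_{n_i})$ and deducing $t(y_1,y_2)+(x_3,x_4)=0$, which forces $z_2\in\R w_1$; it then finishes the remaining subcase with the annulus picture.) Without one of these arguments your proof does not close.
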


\begin{proof}
Suppose that $T$ is 2-supercyclic to obtain a contradiction.
Let $M=\Span\{x,y\}$ be a 2-supercyclic subspace for $T$. Then, one can suppose either $x=(x_1,x_2,0,1)$ and $y=(y_1,y_2,1,0)$ or $x=(x_1,x_2,x_3,x_4)$ and $y=(y_1,y_2,0,0)$ where $(x_3,x_4)\neq(0,0)$.

$\bullet$ If $x=(x_1,x_2,0,1)$ and $y=(y_1,y_2,1,0)$, then for any non-empty open sets $U$ and $V$ in $\R^{2}$, there exist a strictly increasing sequence $(n_i)_{i\in\N}$ and two real sequences $(\lambda_{n_i})_{i\in\N},(\mu_{n_i})_{i\in\N}$ such that:

$$\begin{cases}\ 
R^{n_i}_{\theta}\left(\lambda_{n_i}\left(\begin{array}{c}x_1\\x_2\\\end{array}\right)+\mu_{n_i}\left(\begin{array}{c}y_1\\y_2\\\end{array}\right)\right) +n_iR^{n_i}_{\theta}\left(\begin{array}{c}\mu_{n_i}\\\lambda_{n_i}\\\end{array}\right)\in U\\
R^{n_i}_{\theta}\left(\begin{array}{c}\mu_{n_i}\\\lambda_{n_i}\\\end{array}\right)\in V
\end{cases}$$
which is equivalent to:
\begin{numcases}\ 
\lambda_{n_i}\left(\begin{array}{c}x_1\\x_2\\\end{array}\right)+\mu_{n_i}\left(\begin{array}{c}y_1\\y_2\\\end{array}\right) +n_i\left(\begin{array}{c}\mu_{n_i}\\\lambda_{n_i}\\\end{array}\right)\in R^{-n_i}_{\theta}\left(U\right)\label{num1+1}\\
\left(\begin{array}{c}\mu_{n_i}\\\lambda_{n_i}\\\end{array}\right)\in R^{-n_i}_{\theta}\left(V\right)\label{num1+2}
\end{numcases}\\
Let $V=B\left(\left(\begin{array}{c}1\\0\\\end{array}\right),\epsi\right)$ be an open ball of radius $\epsi$ centred in $\left(\begin{array}{c}1\\0\\\end{array}\right)$ with $0<\epsi<1$ and $U$ be any non-empty bounded open set, then (\ref{num1+2}) implies that for all $i\in\N$, $0\leq\vert\lambda_{n_i}\vert,\vert\mu_{n_i}\vert<1+\epsi$.
One may divide (\ref{num1+1}) by $n_i$ to get:
$$\frac{\lambda_{n_i}}{n_i}\left(\begin{array}{c}x_1\\x_2\\\end{array}\right)+\frac{\mu_{n_i}}{n_i}\left(\begin{array}{c}y_1\\y_2\\\end{array}\right) +\left(\begin{array}{c}\mu_{n_i}\\\lambda_{n_i}\\\end{array}\right)\in \frac{R^{-n_i}_{\theta}\left(U\right)}{n_i}$$
However, since the sequences $(\lambda_{n_i})_{i\in\N}$ and $(\mu_{n_i})_{i\in\N}$ are bounded, $\frac{\lambda_{n_i}}{n_i}\underset{i\to+\infty}{\longrightarrow}0$ and $\frac{\mu_{n_i}}{n_i}\underset{i\to+\infty}{\longrightarrow}0$ and since $U$ is a bounded set, $(\lambda_{n_{i}})_{i\in\N}$ and $(\mu_{n_{i}})_{i\in\N}$ have to go to zero. This contradicts $\left(\begin{array}{c}\mu_{n_i}\\\lambda_{n_i}\\\end{array}\right)\in R^{-n_i}_{\theta}\left(V\right)$.

$\bullet$ If $x=(x_1,x_2,x_3,x_4)$ and $y=(y_1,y_2,0,0)$, then one may suppose $\Vert(x_3,x_4)\Vert=1$. By 2-supercyclicity of $T$, for any non-empty open sets $U,V$ in $\R^{2}$, there exist a strictly increasing sequence $(n_i)_{i\in\N}$ and two real sequences $(\lambda_{n_i})_{i\in\N},(\mu_{n_i})_{i\in\N}$ such that:

$$\begin{cases}\ 
R^{n_i}_{\theta}\left(\lambda_{n_i}\left(\begin{array}{c}x_1\\x_2\\\end{array}\right)+\mu_{n_i}\left(\begin{array}{c}y_1\\y_2\\\end{array}\right)\right) +n_i\lambda_{n_{i}}R^{n_i}_{\theta}\left(\begin{array}{c}x_3\\ x_4\\\end{array}\right)\in U\\
\lambda_{n_i}R^{n_i}_{\theta}\left(\begin{array}{c}x_3\\x_4\\\end{array}\right)\in V
\end{cases}$$
which can be rewritten:
\begin{numcases}\ 
\lambda_{n_i}\left(\begin{array}{c}x_1\\x_2\\\end{array}\right)+\mu_{n_i}\left(\begin{array}{c}y_1\\y_2\\\end{array}\right) +n_i\lambda_{n_i}\left(\begin{array}{c}x_3\\x_4\\\end{array}\right)\in R^{-n_i}_{\theta}\left(U\right)\label{num2+1}\\
\lambda_{n_i}\left(\begin{array}{c}x_3\\x_4\\\end{array}\right)\in R^{-n_i}_{\theta}\left(V\right)\label{num2+2}
\end{numcases}
Let $V=B\left(\left(\begin{array}{c}r\\0\\\end{array}\right),\epsi\right)$ be an open ball of radius $\epsi$ centred in $\left(\begin{array}{c}r\\0\\\end{array}\right)$ with $0<\epsi<1$ and $r>1$.
According to (\ref{num2+2}), for every $i\in\N$ we have $r-\epsi<\vert \lambda_{n_i}\vert<r+\epsi$. Divide then (\ref{num2+1}) by $n_i \lambda_{n_{i}}$:
$$\frac{\mu_{n_i}}{n_i\lambda_{n_i}}\left(\begin{array}{c}y_1\\y_2\\\end{array}\right) +\left(\begin{array}{c}x_3\\x_4\\\end{array}\right)
                                                                                                                                                                                   \underset{i\to+\infty}{\longrightarrow}\left(\begin{array}{c}0\\0
                                                                                                                                                                                                                                \end{array}\right)
$$
From this we deduce that the sequence $\left(\frac{\mu_{n_i}}{n_i\lambda_{n_i}}\right)_{i\in\N}$ is convergent to some $t\in\R$ because $(y_1,y_2)\neq(0,0)$, so we have:
$$t\left(\begin{array}{c}y_1\\y_2\\\end{array}\right) +\left(\begin{array}{c}x_3\\x_4\\\end{array}\right)=\left(\begin{array}{c}0\\0\end{array}\right).$$
As $\left(\begin{array}{c}x_3\\x_4\\\end{array}\right)$ is non-zero, this last equation implies that $\left\{\left(\begin{array}{c}x_3\\x_4\\\end{array}\right),\left(\begin{array}{c}y_1\\y_2\\\end{array}\right)\right\}$ is linearly dependent.
Thus choosing an appropriate linear combination of $x$ and $y$, one may assume $x=(x_1,x_2,x_3,x_4)$ and $y=(x_3,x_4,0,0)$, hence (\ref{num2+1}) and (\ref{num2+2}) give:
\begin{numcases}\ 
\lambda_{n_i}\left(\begin{array}{c}x_1\\x_2\\\end{array}\right)+(\mu_{n_i} +n_i\lambda_{n_i})\left(\begin{array}{c}x_3\\x_4\\\end{array}\right)\in R^{-n_i}_{\theta}\left(U\right)\label{num3+1}\\
\lambda_{n_i}\left(\begin{array}{c}x_3\\x_4\\\end{array}\right)\in R^{-n_i}_{\theta}\left(V\right)\label{num3+2}
\end{numcases} 
Now, it is clear that the vectors $\left(\begin{array}{c}x_1\\x_2\\\end{array}\right)$ and $\left(\begin{array}{c}x_3\\x_4\\\end{array}\right)$ are linearly independent.
Indeed, suppose in order to obtain a contradiction that they are linearly dependent. Then upon taking appropriate linear combinations and replacing $x$, we can write $x=(0,0,x_3,x_4)$, $y=(x_3,x_4,0,0)$ and 
$$\begin{cases}\ 
(\mu_{n_i}+n_i\lambda_{n_i})\left(\begin{array}{c}x_3\\x_4\\\end{array}\right)\in R^{-n_i}_{\theta}\left(U\right)\\
\lambda_{n_i}\left(\begin{array}{c}x_3\\x_4\\\end{array}\right)\in R^{-n_i}_{\theta}\left(V\right)
\end{cases}$$
But if one chooses two non-empty open sets $U$ and $V$ such that there does not exist a straight line passing through the origin and intersecting both $U$ and $V$, then we have a contradiction.

Thus $\left(\begin{array}{c}x_1\\x_2\\\end{array}\right)$ and $\left(\begin{array}{c}x_3\\x_4\\\end{array}\right)$ are linearly independent and let $\alpha$ denote the angle between these two vectors, then $\vert\sin(\alpha)\vert>0$. Choose $0<a<\vert\sin(\alpha)\vert(r-\epsi)\left\Vert\left(\begin{array}{c}x_1\\x_2\\\end{array}\right)\right\Vert$ and  $U=B\left(\small\left(\begin{array}{c}\frac{a}{2}\\0\end{array}\right),\frac{a}{4}\right)$ and let $\mathcal{C}_{U}$ denote the annulus obtained by rotations of the ball $U$.
With a little computation, one may easily notice that the set $\left(\R\left(\begin{array}{c}x_3\\x_4\\\end{array}\right)+[r-\epsi,r+\epsi]\left(\begin{array}{c}x_1\\x_2\\\end{array}\right)\right)$ does not intersect $\mathcal{C}_{U}$ contradicting (\ref{num3+1}) (see the figure below).
\\\includegraphics[width=15cm]{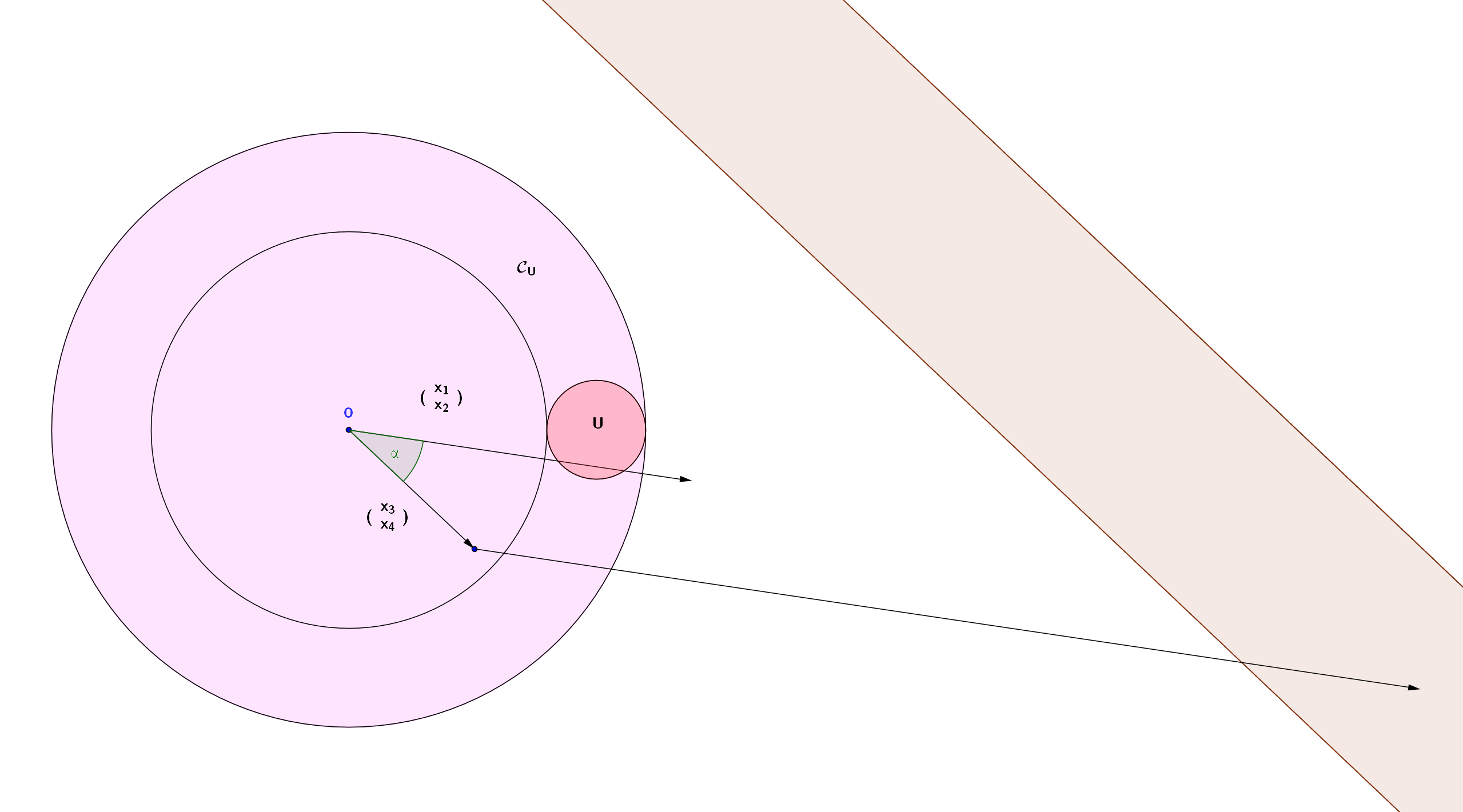}\\
So $ \left(\begin{array}{cc}
        R_{\theta}&R_{\theta}\\
	0&R_{\theta}\\
       \end{array}\right)$ is not 2-supercyclic.
\end{proof}

\begin{rem}
 One can easily notice that the previous matrix is 3-supercyclic if $\pi$ and $\theta$ are linearly independent over $\Q$.
\end{rem}

\begin{rem}
As one can notice, the previous proof is divided into two parts depending on the ''shape`` of the basis. Actually, to be able to deal with such operators, we will constantly make differences according to the basis' shape.
\end{rem}

\subsection{A leading example}

We deal with an example to show that we need some more tools if we want to go further in a precise manner. First, the next result proves that the supercyclic constants cannot be improved for the two matrices given in Example \ref{exensup} i.e. the first operator is not $(N-1)$-supercyclic and the second one is not $N$-supercyclic..
Moreover, in the following, $T$ is a direct sum of rotations' multiples, every one of these acting on $\R^2$. Hence when one usually consider a vector component, we consider a vector bi-component instead, meaning that for the next result the natural way to define a vector is not as being in $\R^{2N}$ but rather in $(\R^2)^N$. In the following, the $k$-th bi-component of a vector $(x_{1},\dots,x_{2N})$ is the vector on which the $k$-th rotation matrix acts i.e. the vector $(x_{2k-1},x_{2k})$.

\begin{prop}\label{propnrotpasn-1}
Let $N\geq2$, then $R_N:=\left(\begin{array}{ccccc}
a_{1}R_{\theta_1}&0&\cdots&0\\
0&\ddots&&\vdots\\
\vdots&&\ddots&0\\
0&\cdots&0&a_{N}R_{\theta_N}\\
\end{array}
\right)$ is not $(N-1)$-supercyclic on $\R^{2N}$ for every choice of $a_1,\ldots,a_N\in\R$ and every choice of $\theta_1,\ldots,\theta_N\in\R$.

\end{prop}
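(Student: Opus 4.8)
The plan is to argue by contradiction: suppose $R_N$ is $(N-1)$-supercyclic with supercyclic subspace $M$ of dimension $N-1$, and derive a contradiction by a dimension/boundedness argument on the bi-components. The key structural observation is that $R_N$ acts block-diagonally on $(\R^2)^N$, and on each block $a_k R_{\theta_k}$ acts by a dilation composed with a rotation, so $R_N^{n}$ maps a vector $v = (v^{(1)},\dots,v^{(N)})$ (bi-components) to $(a_1^n R_{\theta_1}^n v^{(1)},\dots,a_N^n R_{\theta_N}^n v^{(N)})$. If $m = \dim M = N-1$, then a general element of $T^n(M)$ is a linear combination $\sum_{j=1}^{N-1}\lambda_{j,n} R_N^n(e_j)$ for a fixed basis $e_1,\dots,e_{N-1}$ of $M$. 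Density of $\{T^n(M)\}$ in $\R^{2N}$ means: for any target open sets $U_1,\dots,U_N \subset \R^2$ (one per bi-component), there are $n_i \to \infty$ and scalars $\lambda_{j,n_i}$ so that the $k$-th bi-component of $\sum_j \lambda_{j,n_i} R_N^{n_i}(e_j)$ lies in $U_k$ for every $k$, simultaneously.

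First I would reduce to a convenient normal form for the basis of $M$: after a change of basis within $M$ (which does not affect the orbit of $M$), arrange the $2\times(N-1)$-type "matrix" whose columns record, for a fixed bi-component, the contributions of the basis vectors, so that the dependencies among bi-components are transparent — mirroring the case distinction on "basis shape" already used in Proposition~\ref{propjordpas2}. Concretely: think of the $N$ bi-components of the $N-1$ basis vectors as giving, for each $k$, a vector $w^{(k)} \in \R^{N-1}$ (the $k$-th bi-components written in the $\R^2$-coordinate, but since each is $2$-dimensional we get two real vectors $w^{(k)}_1, w^{(k)}_2 \in \R^{N-1}$, i.e. a $2\times(N-1)$ matrix $W_k$). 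The total information is $N$ matrices of size $2\times(N-1)$; since density in $\R^{2N}$ is being claimed, these $2N$ rows together must, in an appropriate asymptotic sense, span enough directions — but there are only $N-1$ scalar parameters $\lambda_{j,n_i}$, and the rotations $R_{\theta_k}^{n_i}$ only rotate within each $\R^2$-block without mixing blocks.

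The heart of the argument is then a counting/boundedness contradiction. Pin down the scale of the $\lambda_{j,n_i}$ by choosing the target sets $U_k$ to be small balls centred away from the origin in each bi-component; because $R_N^{n_i}$ scales the $k$-th block by $a_k^{n_i}$, hitting a bounded annulus at bi-component $k$ forces $|a_k^{n_i}| \cdot \|(\text{$k$-th bicomp of } \sum_j \lambda_{j,n_i} e_j)\|$ to stay bounded and bounded away from $0$. Running this over all $k$ pins, up to bounded factors, the growth of the relevant linear combinations. Now the point: the vector of achievable bi-component directions at step $n_i$ lives in the image of a linear map $\R^{N-1} \to (\R^2)^N = \R^{2N}$ whose image is an $(N-1)$-dimensional subspace $E_{n_i}$ (depending on $n_i$ through the rotation angles), so $\{T^n(M)\}$ is a countable union of $(N-1)$-dimensional subspaces of $\R^{2N}$, each a "rotated" copy; one shows these cannot be dense. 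More precisely, I would extract from the $\lambda_{j,n_i}/(\text{leading scale})$ a convergent subsequence (they are bounded after normalisation, using the annulus condition in each block) and pass to a limit, obtaining a single $(N-1)$-dimensional subspace of $\R^{2N}$ that would have to be "dense modulo rotations", which is impossible by a direct geometric obstruction: one can choose the $U_k$ so that no point of the rotated $(N-1)$-plane, over all choices of rotation angles in each block, reaches the prescribed configuration (e.g. prescribe incompatible radii in the blocks, exactly as in the annulus argument $\mathcal{C}_U$ at the end of Proposition~\ref{propjordpas2}).

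I expect the main obstacle to be the bookkeeping of the case distinction on the shape of the basis: depending on how many of the $N-1$ basis vectors have zero bi-component in a given block, the normalisation and the choice of which $\lambda_{j,n_i}$ is "large" changes, and one may need to iterate the reduction (as in Proposition~\ref{propjordpas2}, where linear dependence of certain bi-components was forced and then used to simplify the basis). The clean way to organise this is: (i) if some block, say block $N$, receives a contribution from only one basis vector, reduce to $N-1$ blocks and $N-2$ parameters by restricting attention to the remaining blocks and induct on $N$; (ii) otherwise every block is "fully used", and then the $2N$ scalar density constraints against $N-1$ parameters (plus $N$ independent rotation freedoms, each only $1$-dimensional) already overdetermine the system after the annulus normalisation, giving the contradiction. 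The base case $N=2$ is the rotation-on-$\R^2$ vs. lines picture and reduces to Herzog-type non-supercyclicity of a single $2\times 2$ block together with the fact that a single rotated line cannot be dense in $\R^4$.
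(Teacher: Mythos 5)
Your high-level strategy is in the right spirit and overlaps with the paper's: argue by contradiction, exploit the block-diagonal action on $(\R^2)^N$, normalise the basis of $M$ block by block (the ``staircase'' reduction), pin down the scales of the $\lambda_{j,n_i}$ by sending the iterates towards targets that are annuli or balls centred away from the origin, and conclude by an overdetermination argument. The counting heuristic $N-1$ scalar parameters plus $N$ one-dimensional rotation freedoms equals $2N-1 < 2N$ is exactly the correct intuition for why $(N-1)$-supercyclicity must fail while $N$-supercyclicity can succeed.

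However, the decisive step of your argument --- extract a convergent subsequence from the normalised $\lambda_{j,n_i}$ and ``pass to a limit, obtaining a single $(N-1)$-dimensional subspace of $\R^{2N}$ that would have to be dense modulo rotations'' --- has a genuine gap when the moduli $|a_k|$ are not all equal. The paper first normalises so that $0<|a_1|\leq\cdots\leq|a_{N-1}|\leq a_N=1$, and the whole proof then hinges on this ordering: it propagates, block by block from $k=N$ down to $k=1$, the convergence $a_k^{n_i}\lambda_j^{(n_i)}\to 0$ for the coefficients already controlled at later blocks, using at each step both that $|a_k|\leq|a_{k+1}|$ and the staircase structure. If you instead pass to a limit naively (divide by a ``leading scale'' and use compactness of the Grassmannian), the picture degenerates: the bi-components corresponding to blocks with strictly smaller modulus collapse to zero under the normalisation, so the limiting $(N-1)$-plane need not spread across all blocks, and the geometric obstruction (``no $(N-1)$-plane, rotated independently in each block, is dense'') does not apply in the form you stated. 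Your phrasing implicitly treats the case $|a_1|=\cdots=|a_N|$; the general case is precisely where the paper's careful propagation of $a_k^{n_i}\lambda_j^{(n_i)}\to 0$ is needed, and it replaces the limiting-subspace argument entirely.

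Your reduction step (i) --- ``if some block receives a contribution from only one basis vector, drop it and induct on $N$ with $N-2$ parameters'' --- is also not immediate as stated. Having $x^1$ be the only basis vector with a non-zero last bi-component does not by itself give an $(N-2)$-dimensional supercyclic subspace for the truncated operator: the coefficient $\lambda_{1,n_i}$ of $x^1$ still appears in the earlier bi-components and must first be shown to be negligible there, which is again exactly what the paper's staircase and the $a_k^{n_i}\lambda_j^{(n_i)}\to 0$ chain accomplish. So the skeleton of your proposal is sound, but the key analytic lemma (controlling the scalar sequence uniformly across blocks of different moduli) is missing, and without it neither the limiting argument nor the inductive reduction goes through.
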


\begin{proof}
First upon reordering blocks in $R_N$ and taking a scalar multiple, one may suppose $0<\vert a_1\vert \leq\ldots\leq \vert a_{N-1}\vert\leq a_N=1$. Indeed, the nullity of one of the $a_i$ implies that $R_N$ has not dense range and is not $(N-1)$-supercyclic.
\medskip

We are going to prove that $R_N$ is not $(N-1)$-supercyclic on $\R^{2N}$ by induction.\\
For $N=2$, the result follows from Herzog's result \cite{Herzog}.\\
Suppose that for every $2\leq k< N$, every $\theta_1,\ldots,\theta_k$ and every $0<\vert a_1\vert\leq\ldots\leq\vert a_{k-1}\vert\leq a_{k}=1$ no matrix of the form $R_{k}$ is $(k-1)$-supercyclic. Let us prove it also for $R_{N}$.
Assume to the contrary that $R_N$ is $(N-1)$-supercyclic and let $M=\Span\{x^{1},\cdots,x^{N-1}\}$ be a $(N-1)$-supercyclic subspace for $R_N$. Define $x_{N+1}^{i}:=x^{i}$ for every $1\leq i\leq N-1$.

We argue that for every $k\in\{1, \ldots, N\}$, $M$ is spanned by a family of vectors $\{x_{k}^{1},\ldots,x_{k}^{N-1}\}$ such that if we define $p_{N+1}:=0$ and $$p_{k}:=\sup\left(j\in\{1,\ldots,N-1\}:\text{ the }k\text{-th bi-component from }x_{k}^{j}\text{ is non-null }\right)$$ then we have the following extra properties.
\begin{enumerate}[(a)]
 \item if $p_{k+1}\neq N-1$ then $p_k\in\{p_{k+1}+1,p_{k+1}+2\}$ and if $k\neq N$, $x_{k}^{j}\neq x_{k+1}^{j}$ for every $1\leq j\leq p_{k+1}$,\\
\item if $p_k=p_{k+1}+2$, then $x_{k}^{p_{k}-1}=\left(\begin{array}{c}
                                                         0\\1\\
                                                        \end{array}\right)$ and $x_{k}^{p_{k}}=\left(\begin{array}{c}
                                                         1\\0\\
                                                        \end{array}\right)$,\\
\item for every $k\leq l\leq N$ and every $p_l<j\leq N-1$, the $l$-th bi-component of the vector $x^{j}_{k}$ is null.\\
\end{enumerate}
We are going to prove this by decreasing induction on $k\in\{1,\ldots,N\}$.
Let's begin with the case $k=N$.
\\Upon taking appropriate linear combinations of basis elements of $M$ and reordering, one may assume that we have a basis $x_{N}^{1},\ldots,x_{N}^{N-1}$ of $M$ such that the last bi-component is non-zero either for $x_{N}^{1}$ (i.e. $p_{N}=1$) or for $x_{N}^{1}$ and $x_{N}^{2}$ (i.e. $p_{N}=2$) and is null for the other basis vectors. Moreover, in this last case, one may also require them to be $\left(\begin{array}{c}0\\1\\\end{array}\right)$ and $\left(\begin{array}{c}1\\0\\\end{array}\right)$ as in the proof of Proposition \ref{propjordpas2}.
One may easily notice that the induction hypothesis is satisfied for $k=N$.

Assume that the induction hypothesis is true for $N,\ldots,k+1$, let us check it for $k$.
\\Define $x_{k}^{j}=x_{k+1}^{j}$ for every $1\leq j\leq p_{k+1}$.
Upon taking appropriate linear combinations of the vectors $x_{k+1}^{p_{k+1}+1},\ldots,x_{k+1}^{N-1}$ and reordering one may get $N-1-p_{k+1}$ vectors $x_{k}^{p_{k+1}+1},\ldots,x_{k}^{N-1}$ with $\Span\{x_{k}^{1},\ldots,x_{k}^{N-1}\}=M$ satisfying one of the three following conditions:

\hspace{0,5cm}$\rhd$ the $k$-th bi-component of the vectors $x_{k}^{p_{k+1}+1},\ldots,x_{k}^{N-1}$ is null, i.e. $p_k=p_{k+1}$ but this yields a contradiction. Indeed, as $R_{N}$ is $(N-1)$-supercyclic there exists a strictly increasing sequence $(n_i)_{i\in\N}$ and $N-1$ real sequences $(\lambda_{1}^{(n_i)})_{i\in\N},\ldots,(\lambda_{N-1}^{(n_i)})_{i\in\N}$ such that:
\begin{equation}\label{eqiterR}
\left(\begin{array}{c}
a_{1}^{n_i}R_{\theta_1}^{n_i}\left(\sum_{j=1}^{N-1}\lambda_{j}^{(n_i)}\left(\begin{array}{c}x_{k}^{j}(1)\\x_{k}^{j}(2)\\\end{array}\right)
\right)\\
\vdots\\
a_{k}^{n_i}R_{\theta_{k}}^{n_i}\left(\sum_{j=1}^{p_{k}}\lambda_{j}^{(n_i)}\left(\begin{array}{c}x_{k}^{j}(2k-1)\\x_{k}^{j}(2k)\\\end{array}\right)\right)\\
\vdots\\
a_{N}^{n_i}R_{\theta_N}^{n_i}\left(\sum_{j=1}^{p_{N}}\lambda_{j}^{(n_i)}\left(\begin{array}{c}x_{k}^{j}(2N-1)\\x_{k}^{j}(2N)\\\end{array}\right)\right)\\
\end{array}\right)\underset{i\to\infty}{\longrightarrow}\left(\begin{array}{c}\left(\begin{array}{c}0\\0\end{array}\right)\\\vdots\\\left(\begin{array}{c}1\\0\end{array}\right)\\\vdots\\\left(\begin{array}{c}0\\0\end{array}\right)\\\end{array}\right)
\end{equation}
Then, the last bi-component above implies that for every $1\leq j\leq p_{N}$, $a_{N}^{n_i}\lambda_{j}^{(n_i)}\underset{i\to+\infty}{\longrightarrow}0$ because $R_{N}$ is an isometry, $1\leq p_{N}\leq 2$ and if $p_{N}=2$ then $\sum_{j=1}^{p_{N}}\lambda_{j}^{(n_i)}\left(\begin{array}{c}x_{k}^{j}(2N-1)\\x_{k}^{j}(2N)\\\end{array}\right)=\left(\begin{array}{c}\lambda_{2}^{(n_i)}\\\lambda_{1}^{(n_i)}\\\end{array}\right)$ by induction hypothesis.
Step by step, following the same idea, we can prove in the same way that for every $1\leq j\leq p_{k+1}$, $a_{k+1}\lambda_{j}^{(n_i)}\underset{i\to+\infty}{\longrightarrow}0$ because $\vert a_{k+1}\vert\leq\ldots\leq\vert a_{N}\vert$. Moreover if $p_j=N-1$ for some $j\in\{k+1,\ldots,N\}$ then we conclude at this step that for every $1\leq j\leq N-1$, $a_{k}^{n_i}\lambda_{j}^{(n_i)}\underset{i\to+\infty}{\longrightarrow}0$ contradicting (\ref{eqiterR}).
Since $p_k=p_{k+1}$ and $\vert a_k\vert\leq\vert a_{k+1}\vert$, for every $1\leq j\leq p_{k}$, $a_{k}^{n_i}\lambda_{j}^{(n_i)}\underset{i\to+\infty}{\longrightarrow}0$ but this contradicts (\ref{eqiterR}).

\hspace{0,5cm}$\rhd$ the $k$-th bi-component of the vectors $x_{k}^{p_{k+1}+2},\ldots,x_{k}^{N-1}$ is null but not for $x_{k}^{p_{k+1}+1}$. Then, $p_{k}=p_{k+1}+1$ and for every $p_{k}<j\leq N-1$, the $k$-th bi-component of the vector $x^{j}_{k}$ is null by construction and for every $k+1\leq l\leq N$ and every $p_l<j\leq N-1$, the $l$-th bi-component of the vector $x_{k}^{j}$ is also null
because $p_{k}>p_{k+1}>\ldots>p_N$ and the family $\{x_{k}^{p_{k+1}+1},\ldots,x_{k}^{N-1}\}$ is obtained by taking linear combinations of elements from the vectors $x_{k+1}^{p_{k+1}+1},\ldots,x_{k+1}^{N-1}$ whose $l$-th bi-component is null by induction hypothesis.

\hspace{0,5cm}$\rhd$ the $k$-th bi-component of the vectors $x_{k}^{p_{k+1}+3},\ldots,x_{k}^{N-1}$ is null but not for $x_{k}^{p_{k+1}+1}$ and $x_{k}^{p_{k+1}+2}$ and these two components can be chosen to be $\left(\begin{array}{c}0\\1\\\end{array}\right)$ and $\left(\begin{array}{c}1\\0\\\end{array}\right)$. Here, $p_{k}=p_{k+1}+2$ and we conclude as above.
\\This ends the induction process.

Let us denote by $y^{1},\ldots,y^{N-1}$ the vectors $x_{1}^{1},\ldots,x_{1}^{N-1}$ obtained thanks to the induction process. We proved that the sequence $(p_{N+1-k})_{0\leq k\leq N}$ is increasing until it reaches $N-1$ and is constant after and $p_{N+1}=0$ hence $p_2=N-1$.
This remark now permits to conclude.
Indeed, as $M$ is $(N-1)$-supercyclic subspace for $R_{N}$ then there exist a strictly increasing sequence $(n_i)_{i\in\N}$ and $N-1$ real sequences $(\lambda_{1}^{(n_i)})_{i\in\N},\ldots,(\lambda_{N-1}^{(n_i)})_{i\in\N}$ such that:
\begin{equation}\label{eqiterR2}\left(\begin{array}{c}
a_{1}^{n_i}R_{\theta_1}^{n_i}\left(\sum_{j=1}^{N-1}\lambda_{j}^{(n_i)}\left(\begin{array}{c}x_{1}^{j}\\x_{2}^{j}\\\end{array}\right)
\right)\\
a_{2}^{n_i}R_{\theta_2}^{n_i}\left(\sum_{j=1}^{N-1}\lambda_{j}^{(n_i)}\left(\begin{array}{c}x_{3}^{j}\\x_{4}^{j}\\\end{array}\right)
\right)\\
\vdots\\
a_{N}^{n_i}R_{\theta_{N}}^{n_i}\left(\sum_{j=1}^{p_N}\lambda_{j}^{(n_i)}\left(\begin{array}{c}x_{2N-1}^{j}\\x_{2N}^{j}\\\end{array}\right)\right)\\
\end{array}\right)\underset{i\to\infty}{\longrightarrow}\left(\begin{array}{c}\left(\begin{array}{c}1\\0\end{array}\right)\\\left(\begin{array}{c}0\\0\end{array}\right)\\\vdots\\\left(\begin{array}{c}0\\0\end{array}\right)\\\end{array}\right)
\end{equation}
On the basis of similar reasoning as in the induction process with $p_k=p_{k+1}$ we observe that for every $1\leq j\leq p_{2}=N-1$, $a_{2}^{n_i}\lambda_{j}^{(n_i)}\underset{i\to+\infty}{\longrightarrow}0$ and since $\vert a_1\vert\leq\vert a_{2}\vert$, for every $1\leq j\leq N-1$, $a_{1}^{n_i}\lambda_{j}^{(n_i)}\underset{i\to+\infty}{\longrightarrow}0$ but this contradicts (\ref{eqiterR2}).
\end{proof}
The key in the proof is the adaptation of the basis to the shape of $R_N$ and we are going to make constant use of this method in what follows. This motivates us to detail this method in the next part. 

\subsection{Basis reduction}

Let $m,N\in\N$, $T$ be a linear operator on $\R^N$, $\{x^1,\ldots,x^m\}$ be a linearly independent family in $\R^N$ and $M$ be the subspace spanned by this family.
Using the Jordan real decomposition one may suppose:

$$T=\left(\begin{array}{cccc}
a_{1}\mathcal{B}_{1}&0 &\cdots  &0\\
0&a_{2}\mathcal{B}_{2}&\ddots&\vdots\\
\vdots&&\ddots&0 \\
0&\cdots&0&a_{\gamma}\mathcal{B}_{\gamma} \\
\end{array}\right)$$

where $\mathcal{B}_i=\left(\begin{array}{ccccc}
\mathcal{A}_{i}&\mathcal{A}_i&0 &\cdots  &0\\
0&\mathcal{A}_{i}&\mathcal{A}_i&\ddots&\vdots\\
\vdots&&\ddots&\ddots&\\
&&&\ddots&\mathcal{A}_i \\
0&\cdots&&0&\mathcal{A}_{i} \\
\end{array}\right)$ is a classical or real Jordan block for any $1\leq i\leq \gamma$ with $\mathcal{A}_i=1$ or $\mathcal{A}_i=R_{\theta_i}$ respectively and $\gamma$ is the number of Jordan blocks in the decomposition of $T$.
Define $\tau_i=1$ when $\mathcal{B}_i$ is classical and $\tau_i=2$ when $\mathcal{B}_i$ is real and take also $\rho_i$ such that $\tau_i \rho_i$ is $\mathcal{B}_i$'s size, we will call $\rho_{i}$ the relative size of the block $\mathcal{B}_{i}$.
If $\mathcal{B}_{i}$ is a classical Jordan block, then its relative size is just its size, on the contrary if $\mathcal{B}_{i}$ is a real Jordan block, then its relative size is just its size divided by 2.
We will also denote by $\rho:=\sum_{i=1}^{\gamma}\rho_i$ the relative size of the matrix of $T$. Observe also that with these notations $N=\sum_{i=1}^{\gamma}\rho_i\tau_i$.

\begin{nota}
For the sake of clarity, we introduce a new notation before stating the following theorem.
Let $T$ be a linear operator on $\R^N$ in the previous Jordan form and $x\in\R^{N}$, we define for $1\leq i\leq \rho$
$$\chi_{i}(x)=\begin{cases}
x_{\sum_{l=1}^{p-1}\tau_l \rho_l+i-\sum_{l=1}^{p-1}\rho_l}&\text{ if }\tau_p=1\\
\left(\begin{array}{c}
x_{\sum_{l=1}^{p-1}\tau_l \rho_l+2(i-\sum_{l=1}^{p-1}\rho_l)-1}\\
x_{\sum_{l=1}^{p-1}\tau_l \rho_l+2(i-\sum_{l=1}^{p-1}\rho_l)}\\
\end{array}\right)&\text{ if }\tau_p=2\\
\end{cases}$$
where $p$ is the unique natural number satisfying: $\sum_{l=1}^{p-1}\rho_l<i\leq \sum_{l=1}^{p}\rho_l$. Roughly speaking $p$ is the number of the block $\mathcal{B}_p$ of $T$ which is acting on $\chi_{i}(x)$.
This probably seems a bit complicated at first sight but the underlying idea is natural: the operator $T$ is seen as almost a ''sum'' of operators $\mathcal{A}_i$ acting on either $\R$ or $\R^2$. Then it is also natural to consider the vectors $T$ is acting on, as a direct sum of vectors that the operators $\mathcal{A}_i$ are acting on.
To summarise, on some parts (classical) $T$ acts like if it was an operator on $\R$ and on the others (real), it acts as on $\R^2$, thus $\chi_{i}(x)$ may be either a scalar or a vector of size 2.
\\Let us explain this on an example.
Consider $T=\left(\begin{array}{ccccc}
a\mathcal{B}_{1}&0 &0\\
0&b\mathcal{B}_{2}&0\\
0&0&c\mathcal{B}_{3}\\
\end{array}\right)
=\left(\begin{array}{ccccc}
a&0 &0&0&0\\
0&bR_{\theta}&bR_{\theta}&0&0\\
0&0&bR_{\theta}&0&0\\
0&0&0&c&0\\
0&0&0&0&c
\end{array}\right)$ acting on $\R^7$ then $\tau_1=1,\tau_2=2,\tau_3=1,\rho_1=1,\rho_2=2,\rho_3=2$ and we shall decompose $x=(x_{1},x_{2},x_{3},x_{4},x_{5},x_{6},x_{7})$ as $x=\left(\begin{array}{c} \chi_1(x)\\\chi_2(x)\\\chi_3(x)\\\chi_4(x)\\\chi_5(x)\end{array}\right)$ with $\chi_{1}(x)=x_1,\chi_{2}(x)=\left(\begin{array}{c}x_2\\x_3\end{array}\right),\chi_{3}(x)=\left(\begin{array}{c}x_4\\x_5\end{array}\right),\chi_4(x)=x_6,\chi_5(x)=x_7$.
\end{nota}

Let us state the awaited theorem which is the main tool to prove the results announced at the beginning of the article.

\begin{theo}\label{theoreductiondebase}
Let $T$ be a linear operator on $\R^N$ in the Jordan form. Let also $M$ be an $m$-dimensional subspace.\\
Then, there exist a basis $\{y^1,\ldots,y^m\}$ of $M$, a non-decreasing sequence of integers $(\kappa_i)_{i\in\Z_{+}}$ and a sequence of sets $(\Lambda_i)_{i\in\Z_{+}}\subset \R\cup\R^2$ satisfying:
\begin{enumerate}[(a)]
 \item $\kappa_0=1$, $\Lambda_0=\{\chi_{\rho}(y^{j}), \kappa_0\leq j\leq m\}$.\label{eqredb}\\
 \item For every $i\in\Z_{+}$, $\kappa_{i+1}=\kappa_{i}+\dim(\Span\{\Lambda_i\})$ and $\Lambda_{i+1}=\{\chi_{\rho-(i+1)}(y^{j}), \kappa_{i+1}\leq j\leq m\}$.\label{eqredc}\\
 \item For every $i\in\{0,\ldots,\rho-1\}$, $\{\chi_{\rho-i}(y^{j}),\kappa_i\leq j< \kappa_{i+1}\}$ is either empty or linearly independent.\label{eqredd}\\
 \item $\kappa_\rho=m+1$.\label{eqrede}\\
 \item For every $p\in\{1,\ldots,\rho\}$ and every $j\in\{\kappa_p,\ldots,m\}$, $\chi_{\rho-p+1}(y^{j})=0$ or $\left(\begin{array}{c}
                                                        0\\0\\
                                                       \end{array}\right)$.\label{eqredf}
\end{enumerate}
\end{theo}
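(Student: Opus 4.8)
The plan is to produce the basis by a single descending sweep over the $\rho$ relative components of $T$, handled in the order $\chi_{\rho},\chi_{\rho-1},\dots,\chi_{1}$, each stage consisting of a reordering of the current basis followed by a Gaussian-elimination step that clears one relative component off a ``tail'' of the basis. Formally this is an induction on $i\in\{0,\dots,\rho-1\}$: at stage $i$ I construct $\kappa_{i+1}$ and an updated basis $\{y^1,\dots,y^m\}$ of $M$, keeping the invariant that for every $q\in\{1,\dots,i+1\}$ and every $j\geq\kappa_q$ one has $\chi_{\rho-q+1}(y^j)=0$ (the zero being a scalar or a vector of $\R^2$ according to the block acting on position $\rho-q+1$). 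One starts with $\kappa_0=1$, any basis of $M$, and $\Lambda_0=\{\chi_{\rho}(y^j):1\leq j\leq m\}$, which is (a).

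\textbf{The inductive step.} Suppose the basis and $\kappa_0,\dots,\kappa_i$ are in hand. Set $\Lambda_i=\{\chi_{\rho-i}(y^j):\kappa_i\leq j\leq m\}$, a finite subset of $\R$ or of $\R^2$. Among $\chi_{\rho-i}(y^{\kappa_i}),\dots,\chi_{\rho-i}(y^m)$ select a maximal linearly independent subfamily and reorder $y^{\kappa_i},\dots,y^m$ — only these, so $y^1,\dots,y^{\kappa_i-1}$ stay put and, by the invariant, the vanishing of $\chi_{\rho},\dots,\chi_{\rho-i+1}$ on the tail is preserved — so that this subfamily is $\chi_{\rho-i}(y^{\kappa_i}),\dots,\chi_{\rho-i}(y^{\kappa_{i+1}-1})$ with $\kappa_{i+1}:=\kappa_i+\dim(\Span\{\Lambda_i\})$. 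By maximality, for each $j$ with $\kappa_{i+1}\leq j\leq m$ one may write $\chi_{\rho-i}(y^j)=\sum_{l=\kappa_i}^{\kappa_{i+1}-1}c_l^{(j)}\chi_{\rho-i}(y^l)$; replace $y^j$ by $y^j-\sum_l c_l^{(j)}y^l$. Each replacement is invertible, so $\{y^1,\dots,y^m\}$ remains a basis of $M$; and since both $y^j$ and the $y^l$ involved already vanish on the relative components $\rho,\dots,\rho-i+1$, the new $y^j$ vanishes there too and now also on component $\rho-i$. This gives (b), (c) and the instance $p=i+1$ of (e); an induction on the stage shows that no later clearing reintroduces a nonzero value on an already-cleared component (both the modified vector and the subtracted ones are tail vectors that already vanish there), so the invariant, hence (e) in full, holds at the end. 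When $\kappa_i=m+1$ the set $\Lambda_i$ is empty and $\kappa_{i+1}=\kappa_i$, so the recipe is defined for all $i$, the values for $i>\rho$ being the ones forced by (b). (Incidentally $\dim(\Span\{\Lambda_i\})$ is $\leq 1$ for a classical block and $\leq 2$ for a real one, so $\kappa_{i+1}-\kappa_i\in\{0,1\}$ resp. $\{0,1,2\}$, reminiscent of the behaviour of the integers $p_k$ in the proof of Proposition~\ref{propnrotpasn-1}.)

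\textbf{Establishing (d).} This is the only step beyond bookkeeping. From $\kappa_{i+1}=\kappa_i+\dim(\Span\{\Lambda_i\})$, together with $\dim(\Span\{\Lambda_i\})\leq m-\kappa_i+1$ when $\kappa_i\leq m$ (there being $m-\kappa_i+1$ vectors in the family) and $\Lambda_i=\emptyset$ otherwise, the sequence $(\kappa_i)$ is non-decreasing and never exceeds $m+1$, so $\kappa_{\rho}\leq m+1$. Conversely, if $\kappa_{\rho}\leq m$ then $y^{\kappa_\rho}$ is a genuine basis vector, yet $\kappa_\rho\geq\kappa_p$ for every $p\in\{1,\dots,\rho\}$, so (e) forces $\chi_{\rho-p+1}(y^{\kappa_\rho})=0$ for all such $p$; as $p$ runs over $\{1,\dots,\rho\}$ the index $\rho-p+1$ runs over all of $\{1,\dots,\rho\}$, hence every relative component of $y^{\kappa_\rho}$ vanishes and $y^{\kappa_\rho}=0$, contradicting linear independence. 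Therefore $\kappa_\rho=m+1$.

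\textbf{Expected difficulty.} There is no real obstacle — the proof is elementary linear algebra — but two points need care: making sure the reordering and elimination at stage $i$ do not spoil the zero pattern built at stages $0,\dots,i-1$, which is exactly why every move at stage $i$ is restricted to the tail $y^{\kappa_i},\dots,y^m$; and the index bookkeeping that aligns the thresholds $\kappa_p$ and the shifts $\rho-p+1$ in (e) with the stage at which each relative component is cleared.
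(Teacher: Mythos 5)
Your proposal is correct and follows essentially the same approach as the paper: a descending sweep over the relative components $\chi_\rho,\dots,\chi_1$, at each stage reordering and clearing the current component from the tail vectors via Gaussian elimination, with $\kappa_{i+1}-\kappa_i$ the dimension of the span of the current component over the tail. Your treatment of (d) is slightly more explicit than the paper's one-line remark, but rests on the same observation that $\kappa_\rho\le m$ would force some basis vector to have all relative components zero.
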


\begin{proof}
We want to construct a basis of $M$ adapted to the decomposition of $T$. Of course, this reduction heavily depends on $T$.
Let $x^1,\ldots,x^m$ be a basis of $M$.
We are going to create an increasing sequence of natural numbers $(\kappa_p)_{p\in\Z_{+}}$ and a sequence of sets $(\Lambda'_p)_{p\in\Z_{+}}$. For every step of the reduction, the sequence of natural numbers marks the vector number up to which the reduction has been completed and the sequence of sets contains the part of the vectors we have to reduce to the next step.
\\First define $\kappa_0=1$ and $\Lambda'_0=\{\chi_{\rho}(x^{i}), \kappa_0\leq i\leq m\}$.
By definition, $\Lambda'_{0}$ is either a subset of $\R$ or of $\R^2$, then $\dim(\Span\{\Lambda'_0\})=0,1$ or $2$.

$\bullet$ If $\dim(\Span\{\Lambda'_0\})=0$ then $\Vert \chi_{\rho}(x^{i})\Vert=0$ for any $1\leq i\leq m$ and we set $\kappa_1:=\kappa_0$ and $x_{1}^{j}:=x^{j}$ for every $\kappa_0\leq j\leq m$.

$\bullet$ If $\dim(\Span\{\Lambda'_0\})=1$, upon taking proper linear combinations of $x^1,\ldots,x^m$ and reordering, one may obtain a new basis $x_{1}^{1},\ldots,x_{1}^{m}$ of $M$ with $\Vert \chi_{\rho}(x_{1}^{1})\Vert=1$ and $\Vert \chi_{\rho}(x_{1}^{i})\Vert=0$ for any $\kappa_0+1\leq i\leq m$ and set $\kappa_1:=\kappa_0+1$.

$\bullet$ If $\dim(\Span\{\Lambda'_0\})=2$, upon taking proper linear combinations of $x^1,\ldots,x^m$, reordering, one may obtain a new basis $x_{1}^{1},\ldots,x_{1}^{m}$ of $M$ with $\chi_{\rho}(x_{1}^{1})=\left(\begin{array}{c}0\\1\\\end{array}\right)$, $\chi_{\rho}(x_{1}^{2})=\left(\begin{array}{c}1\\0\\\end{array}\right)$ and $\Vert \chi_{\rho}(x_{1}^{i})\Vert=0$ for $\kappa_0+2\leq i\leq m$ and set $\kappa_1:=\kappa_0+2$.
\\Then set also $\Lambda'_1=\{\chi_{\rho-1}(x_{1}^{i}), \kappa_1\leq i\leq m\}$, thus $\dim(\Span\{\Lambda'_1\})=0,1$ or $2$. Define $x_{2}^{i}=x_{1}^{i}$ for every $1\leq i<\kappa_1$.
Upon taking appropriate linear combinations of the vectors $x_{1}^{\kappa_1},\ldots,x_{1}^{m}$ and reordering one may get $m-\kappa_1+1$ vectors $x_{2}^{\kappa_1},\ldots,x_{2}^{m}$ with $\Span\{x_{2}^{1},\ldots,x_{2}^{m}\}=M$ satisfying one of the three following conditions:

$\bullet$ If $\dim(\Span\{\Lambda'_1\})=0$, then $\Vert \chi_{\rho-1}(x_{2}^{i})\Vert=0$ for any $\kappa_1\leq i\leq m$ and we set $\kappa_2:=\kappa_1$.

$\bullet$ If $\dim(\Span\{\Lambda'_1\})=1$, then $\Vert \chi_{\rho-1}(x_{2}^{\kappa_1})\Vert=1$ and $\Vert \chi_{\rho-1}(x_{2}^{i})\Vert=0$ for any $\kappa_1+1\leq i\leq m$ and set $\kappa_2:=\kappa_1+1$.

$\bullet$ If $\dim(\Span\{\Lambda'_1\})=2$, then $\chi_{\rho-1}(x_{2}^{\kappa_1})=\left(\begin{array}{c}0\\1\\\end{array}\right)$, $\chi_{\rho-1}(x_{2}^{\kappa_1+1})=\left(\begin{array}{c}1\\0\\\end{array}\right)$ and $\Vert \chi_{\rho-1}(x_{2}^{i})\Vert=0$ for $\kappa_1+2\leq i\leq m$ and set $\kappa_2:=\kappa_1+2$.

Suppose that this construction has been carried out until we obtain $x_{k}^{1},\ldots,x_{k}^m$, then set $\Lambda'_k=\{\chi_{\rho-k}(x_{k}^{i}), \kappa_k\leq i\leq m\}$, thus $\dim(\Span\{\Lambda'_k\})=0,1$ or $2$. Define $x_{k+1}^{i}=x_{k}^{i}$ for every $1\leq i<\kappa_k$.
Upon taking appropriate linear combinations of the vectors $x^{\kappa_k},\ldots,x^{m}$ and reordering one may get $m-\kappa_k+1$ vectors $x_{k+1}^{\kappa_k},\ldots,x_{k+1}^{m}$ with $\Span\{x_{k+1}^{1},\ldots,x_{k+1}^{m}\}=M$ satisfying one of the three following conditions:

$\bullet$ If $\dim(\Span\{\Lambda'_k\})=0$, then $\Vert \chi_{\rho-k}(x_{k+1}^{i})\Vert=0$ for any $\kappa_k\leq i\leq m$ and we set $\kappa_{k+1}:=\kappa_k$.

$\bullet$ If $\dim(\Span\{\Lambda'_k\})=1$, then $\Vert \chi_{\rho-k}(x_{k+1}^{\kappa_k})\Vert=1$ and $\Vert \chi_{\rho-k}(x_{k+1}^{i})\Vert=0$ for any $\kappa_k+1\leq i\leq m$ and set $\kappa_{k+1}:=\kappa_k+1$.

$\bullet$ If $\dim(\Span\{\Lambda'_k\})=2$, then $\chi_{\rho-k}(x_{k+1}^{\kappa_k})=\left(\begin{array}{c}0\\1\\\end{array}\right)$, $\chi_{\rho-k}(x_{k+1}^{\kappa_1+1})=\left(\begin{array}{c}1\\0\\\end{array}\right)$ and $\Vert \chi_{\rho-k}(x_{k+1}^{i})\Vert=0$ for $\kappa_k+2\leq i\leq m$ and set $\kappa_{k+1}:=\kappa_k+2$.

As a consequence, step by step we finally get a basis $(y^1,\ldots,y^m):=(x_{\rho}^{1},\ldots,x_{\rho}^{m})$ of $M$ and we set $\kappa_q:=\kappa_\rho$ for every $q$ greater than $\rho$.
We set $\Lambda_0=\{\chi_{\rho}(y^{j}), \kappa_0\leq j\leq m\}$ and $\Lambda_{i}=\{\chi_{\rho-i}(y^{j}), \kappa_{i}\leq j\leq m\}$, thus (\ref{eqredb}) is satisfied by definition. 
It suffices to remark then that $\dim(\Span\{\Lambda_{i}\})=\dim(\Span\{\Lambda'_{i}\})$ to check (\ref{eqredc}), (\ref{eqredd}) and (\ref{eqredf}).
Moreover,  (\ref{eqrede}) is also satisfied as $(y^{1},\ldots,y^m)$ form a basis of $M$ then $y^m$ is non-zero. Hence $\Lambda_\rho=\emptyset$ ($\Leftrightarrow \kappa_\rho=m+1$).
\end{proof}
\begin{rem}
The reduced basis we have described in the previous theorem has the following inverse staircase shape:
$$\tiny\left(\begin{array}{cccccccc}
&&&&&&&\multicolumn{1}{c}{\cdots} \\\cline{8-8}
&&&&&&\multicolumn{1}{c|}{\cdots}&\\\cline{7-7}
&{\fontsize{2cm}{1cm}\selectfont \text{*}}&&&&\multicolumn{1}{c|}{\adots}&& \\
&&&&&&& \\ 
&&\adots&&&{\fontsize{2cm}{1cm}\selectfont \text{0}}&& \\
&\adots&&&&&&\\ 
\multicolumn{1}{c|}{\cdots}&&&&&&&\\ \cline{1-1}
\end{array}\right)$$
We keep these notations for the rest of this paper. The reader needs to have in mind these notations when we decompose an operator in its Jordan form or when we reduce a basis. When we will need to refer to Theorem \ref{theoreductiondebase}, we will say that some basis has been reduced with respect to an operator.
\end{rem}

\begin{nota}
From now on, we will need to work with several vectors $x^{1},\ldots,x^{m}$. For this reason, we leave the heavy notation $\chi_{i}(x^{j})$ we introduced before Theorem \ref{theoreductiondebase} for a shorter one $\chi_{i}^{j}$. 
\end{nota}

\subsection{Primary matrices}

\begin{defi}
Let $\rho,N\in\N$.
 An operator $T$ on $\R^N$ is said to be primary of order $\rho$ when $T=\oplus_{i=1}^{\rho}\mathcal{A}_i$ with $\mathcal{A}_i=1$ or $R_{\theta_i}$ with $\theta_i\in\R$.
\end{defi}

\begin{rem}
One can see at first glance that if $T$ is primary of order $\rho$ on $\R^N$, then $\rho\in\llbracket\left\lfloor\frac{N+1}{2}\right\rfloor,N\rrbracket$. Moreover, $\rho$ is the relative size of $T$.
\end{rem}

We begin our study with primary matrices. However, even if the next result is a partial generalisation of Proposition \ref{propnrotpasn-1}, their proofs are independent. Moreover, this proof puts forward some useful ideas.

\begin{prop}\label{propmatprimk}
 Let $\rho\in\N$.
There is no $(\rho-1)$-supercyclic primary matrix of order $\rho$ .
\end{prop}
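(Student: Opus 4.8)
The plan is to mimic the structure of the proof of Proposition~\ref{propnrotpasn-1}, but working with a primary matrix $T=\oplus_{i=1}^{\rho}\mathcal{A}_i$ where now some blocks $\mathcal{A}_i$ are the scalar $1$ (classical Jordan blocks of size $1$) and the others are rotations $R_{\theta_i}$. First I would argue by contradiction: suppose $T$ is $(\rho-1)$-supercyclic, with $(\rho-1)$-supercyclic subspace $M=\Span\{x^1,\dots,x^{\rho-1}\}$. Since $M$ has dimension $\rho-1$ and $T$ has relative size $\rho$, there is exactly one ``extra'' block. The natural first step is to apply Theorem~\ref{theoreductiondebase} (basis reduction with respect to $T$): I get a reduced basis $\{y^1,\dots,y^{\rho-1}\}$ with the inverse staircase shape, together with the non-decreasing sequence $(\kappa_i)$ and the sets $(\Lambda_i)$. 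Because $m=\rho-1$ and each $\kappa_{i+1}-\kappa_i=\dim(\Span\{\Lambda_i\})\in\{0,1\}$ for scalar blocks and $\{0,1,2\}$ for rotation blocks, while $\kappa_\rho=m+1=\rho$, the ``budget'' of increments is exactly $\rho-1$ spread over $\rho$ blocks. So exactly one block contributes a $0$-increment, OR one rotation block contributes a full $2$-increment compensating a $0$ somewhere — this combinatorial bookkeeping is the structural heart.

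Next I would exploit the dynamics. Writing the iterates $T^{n_i}$ applied to a generic element $\sum_j \lambda_j^{(n_i)} y^j$ and demanding convergence to appropriate target vectors (as in equations~(\ref{eqiterR}) and~(\ref{eqiterR2})), the crucial observation is that on every block $\mathcal{A}_i$ the operator acts as an isometry (rotations are isometries, and the scalar $1$ is trivially an isometry, after the normalization that reduces to the unimodular case — here all moduli are $1$ by the definition of primary). Hence on any block where, after reduction, only one or two of the $y^j$ have a non-null component, the corresponding targets force the relevant coefficients $\lambda_j^{(n_i)}$ to stay bounded, and in fact — propagating the argument block by block exactly as in Proposition~\ref{propnrotpasn-1} — to converge to $0$. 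The staircase shape guarantees that once a coefficient $\lambda_j^{(n_i)}$ is killed on one block, it reappears controlled on the next, so the conclusion cascades: all coefficients $\lambda_1^{(n_i)},\dots,\lambda_{\rho-1}^{(n_i)}$ tend to $0$. This contradicts the requirement that on the block carrying a non-trivial target (e.g.\ the vector $\left(\begin{array}{c}1\\0\end{array}\right)$ or the scalar $1$) the image be close to a non-zero vector.

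The one genuinely new phenomenon compared with Proposition~\ref{propnrotpasn-1} — and the step I expect to be the main obstacle — is the interaction between classical blocks of size $1$ and real blocks. On a scalar block, $\dim(\Span\{\Lambda_i\})$ is at most $1$, so a scalar block cannot ``absorb'' two basis vectors; this rigidity actually helps, but it means the staircase can get stuck differently depending on whether the ``missing'' dimension sits on a scalar block or a rotation block. I would handle this by a small case analysis: either the $0$-increment block is a scalar block, in which case that coordinate of every $y^j$ is zero and one simply restricts $T$ and $M$ to the remaining $\rho-1$ blocks, which would make the restricted $(\rho-1)$-dimensional $M$ fill an $(\rho-1)$-relative-size space — but that space has $\rho-1$ rotation/scalar blocks and $M$ of dimension $\rho-1$, so this is the ``trivial'' full-dimensional case and no contradiction arises directly; instead one must observe the target cannot be reached because the killed scalar coordinate can never leave $0$. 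Or the missing dimension sits among the rotations, and then the argument of Proposition~\ref{propnrotpasn-1} transfers essentially verbatim. I would organize the write-up so the isometry-cascade lemma is isolated, then dispatch the two cases; the delicate point is ensuring in the ``scalar-block'' case that denseness of the orbit is genuinely violated — which it is, since a zero coordinate stays zero under $T$, so the orbit lies in a proper subspace.
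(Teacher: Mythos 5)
Your plan starts in the right place: reduce the basis via Theorem~\ref{theoreductiondebase} and run an isometry cascade across blocks, exactly as the paper does. But the second half of your write-up contains a genuine gap, and it is the gap that would sink a real proof.

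The key error is the sentence ``either the $0$-increment block is a scalar block, in which case that coordinate of every $y^j$ is zero and one simply restricts $T$ and $M$ to the remaining $\rho-1$ blocks.'' This is false for $p>0$. If $p$ is the smallest index with $\kappa_p=\kappa_{p+1}$, the reduction forces $\chi_{\rho-p}^j=0$ only for $j\geq\kappa_p$; the earlier vectors $y^1,\ldots,y^{\kappa_p-1}$ are allowed to have nonzero component on that block, and in general they do. So the orbit is \emph{not} confined to a proper coordinate subspace, the ``restriction'' of $M$ does not make sense, and your fallback ``a zero coordinate stays zero, so the orbit lives in a proper subspace'' only covers the case $p=0$ (which the paper dispatches in one line). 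For $p>0$ you need something more, and that something more is precisely where the hard work lives. The paper does it by first establishing, via a nested induction over the strictly increasing portion $\kappa_0<\kappa_1<\cdots<\kappa_p$, that $\lambda_j^{(n_i)}\to 0$ for every $j<\kappa_p$ (using the linear independence of $\{\chi_{\rho-q}^j\}_{\kappa_q\leq j<\kappa_{q+1}}$ and the isometry of each $\mathcal{A}_{\rho-q}$); only then does one look at line $(L_{\rho-p})$, observe that it involves \emph{only} those already-killed coefficients, and conclude it converges to $0$, contradicting that it must converge to a unit vector. Your phrase ``all coefficients $\lambda_1^{(n_i)},\ldots,\lambda_{\rho-1}^{(n_i)}$ tend to $0$'' overclaims: the cascade only reaches $j<\kappa_p$, and that is exactly enough, but the statement as written is not what is proved.

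Two smaller points. First, the combinatorial count ``exactly one block contributes a $0$-increment, OR one rotation block contributes a full $2$-increment compensating a $0$ somewhere'' is not quite right: all you can deduce from $\sum_{p=0}^{\rho-1}(\kappa_{p+1}-\kappa_p)=\rho-1$ over $\rho$ steps is that \emph{at least} one increment is $0$. The paper sidesteps this entirely by showing that \emph{no} increment can be $0$, which forces $\kappa_\rho\geq\rho+1$ and contradicts $\kappa_\rho=m+1=\rho$. Second, the ``genuinely new phenomenon'' you anticipate from mixing scalar and rotation blocks does not materialize: both are isometries, the reduced family $\{\chi_{\rho-q}^j\}_{\kappa_q\leq j<\kappa_{q+1}}$ is linearly independent whether it has one or two elements, and the paper needs no case split by block type anywhere. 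So the correct proof is actually cleaner than you expected; your proposed split into ``scalar case / rotation case'' is both unnecessary and, as written, unsound.
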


\begin{proof}
Let $T=\oplus_{i=1}^{\rho} \mathcal{A}_i$ be a primary matrix of order $\rho$.
Following the notations we introduced before, $\rho_i=1$ for every $1\leq i\leq \rho$. Now suppose, in order to obtain a contradiction, that $T$ is $(\rho-1)$-supercyclic.
Let $M=\Span\{x^1,\ldots,x^{\rho-1}\}$ be a $(\rho-1)$-supercyclic subspace for $T$ and then reduce the basis of $M$ with Theorem \ref{theoreductiondebase}.
First, it is worth noting that for any $p<\rho$, $\kappa_p\neq \kappa_{p+1}$.
\medskip

Assume that the contrary holds and let $p<\rho$ be the smallest integer such that $\kappa_p=\kappa_{p+1}$. This implies $\dim(\Span\{\Lambda_p\})=0$ and thus for any $\kappa_p\leq j\leq \rho-1$, $\Vert \chi_{\rho-p}^{j}\Vert=0$.
But, for all $i\in\N$ and all real sequence $(\lambda_j)_{1\leq j\leq \rho-1}$, 
$$T^i\left(\sum_{j=1}^{\rho-1}\lambda_j x^j\right)=\begin{cases}
                                      \mathcal{A}_{1}^{i}\left(\sum_{j=1}^{\rho-1}\lambda_j \chi_{1}^{j}\right)\\
\vdots\\
\mathcal{A}_{\rho-p}^{i}\left(\sum_{j=1}^{\kappa_{p}-1}\lambda_j \chi_{\rho-p}^{j}\right)\\
\mathcal{A}_{\rho-p+1}^{i}\left(\sum_{j=1}^{\kappa_{p}-1}\lambda_j \chi_{\rho-p+1}^{j}\right)\\
\vdots\\
\mathcal{A}_{\rho}^{i}\left(\sum_{j=1}^{\kappa_{1}-1}\lambda_j \chi_{\rho}^{j}\right)\\
                                     \end{cases}\begin{array}{c}
(L_1)\\
                                                 \vdots\\
(L_{\rho-p})\\
(L_{\rho-p+1})\\
\vdots\\
(L_{\rho})\\
                                                \end{array}
$$
Clearly, if $p=0$ then $(L_\rho)=0$ and $M$ fails to be $(\rho-1)$-supercyclic for $T$.
In the following, we may assume $p>0$ without loss of generality.
Then, by $(\rho-1)$-supercyclicity of $M$, there exist $(n_i)_{i\in\N}$ and $\rho-1$ real sequences $(\lambda_{1}^{(n_i)})_{i\in\N},\ldots,(\lambda_{\rho-1}^{(n_i)})_{i\in\N}$ such that for any $j\in\llbracket1,\rho\rrbracket\setminus\{\rho-p\}$,$$(L_j)\underset{i\to+\infty}{\longrightarrow}0\text{ and } (L_{\rho-p})\underset{i\to+\infty}{\longrightarrow}Y\text{ with }\Vert Y\Vert=1.$$
\medskip

We shall prove that for any $1\leq j< \kappa_p$, $\lambda_{j}^{(n_i)}\underset{i\to+\infty}{\longrightarrow}0$. Such an integer $j$ belongs to a unique interval $[\kappa_q,\kappa_{q+1}[$ and we shall prove this property by induction on $q$.
\\If $1\leq j< \kappa_1$, then $\{\chi^{j}_{\rho}\}_{1\leq j< \kappa_1}\neq\emptyset$ is a linearly independent family and $\mathcal{A}_\rho$ being an isometry, $(L_\rho)$ gives : $\sum_{j=1}^{\kappa_{1}-1}\lambda_{j}^{(n_i)} \chi_{\rho}^{j}\underset{i\to+\infty}{\longrightarrow}0$, hence $\lambda_{j}^{(n_i)}\underset{i\to+\infty}{\longrightarrow}0$ for every $1\leq j< \kappa_1$.\\
We assume that the induction hypothesis is true for $1\leq q<p$. We have to prove it for $q+1$ too.
Since $(L_{\rho-q})$ converges to $0$ and $\mathcal{A}_{\rho-q}$ being an isometry, we have: $\sum_{j=1}^{\kappa_{q+1}-1}\lambda_{j}^{(n_i)} \chi_{\rho-q}^{j}\underset{i\to+\infty}{\longrightarrow}0$.
The recurrence hypothesis implies $\lambda_{j}^{(n_i)}\underset{i\to+\infty}{\longrightarrow}0$ for any $1\leq j<\kappa_q$.
Hence $\sum_{j=\kappa_q}^{\kappa_{q+1}-1}\lambda_{j}^{(n_i)} \chi_{\rho-q}^{j}\underset{i\to+\infty}{\longrightarrow}0$.
However, $\{\chi^{j}_{\rho-q}\}_{\kappa_q\leq j< \kappa_{q+1}}\neq\emptyset$ is a linearly independent family by the reduction properties and because $\kappa_{q}\neq \kappa_{q+1}$, so $\lambda_{j}^{(n_i)}\underset{i\to+\infty}{\longrightarrow}0$ for every $1\leq j< \kappa_{q+1}$.
This ends the induction step.
\medskip

Thus, for any $1\leq j<\kappa_p$, $\lambda_{j}^{(n_i)}\underset{i\to+\infty}{\longrightarrow}0$. Considering these limits in $(L_{\rho-p})$ and the fact that $\mathcal{A}_{\rho-p}$ is an isometry yields : 
$$\mathcal{A}_{\rho-p}^{i}\left(\sum_{j=1}^{\kappa_{p}-1}\lambda_{j}^{(n_i)} \chi_{\rho-p}^{j}\right)\underset{i\to+\infty}{\longrightarrow}0.$$
But this contradicts the convergence of $(L_{\rho-p})$ to some unit vector. Hence $\kappa_p\neq \kappa_{p+1}$ for every $p<\rho$.
\medskip

Considering that $\kappa_0=1$ and that the sequence $(\kappa_p)_{0\leq p\leq \rho}$ is increasing, one obtains $\kappa_{\rho-1}\geq \rho$, hence by Theorem \ref{theoreductiondebase} $\kappa_\rho=\kappa_{\rho-1}$. This contradiction proves that $T$ is not $(\rho-1)$-supercyclic.
\end{proof}

\subsection{For a single real Jordan block}
The aim of this section is to generalise Proposition \ref{propjordpas2} to the case of a real Jordan block of arbitrary dimension. The two following lemmas are useful to express in another way the iterates of a subspace by a real Jordan block.

\begin{lem}\label{lemcomb}
Define $\Delta_{n}(i):=\binom{i}{n}-\sum_{k=1}^{n-1}\Delta_{k}(i)\binom{i}{n-k}$ for any $n\geq0$ and $i\geq0$. Then, $\Delta_{n}$ is a polynomial in $i$ of degree $n$ and its leading coefficient is $\frac{(-1)^{n+1}}{n!}$.
\end{lem}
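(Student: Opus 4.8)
**The plan is to prove both claims (that $\Delta_n$ is a polynomial of degree $n$ and that its leading coefficient is $\frac{(-1)^{n+1}}{n!}$) simultaneously by strong induction on $n$.**

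First I would verify the base case. For $n=0$, the empty sum is zero, so $\Delta_0(i)=\binom{i}{0}=1$, a degree-$0$ polynomial with leading coefficient $1=\frac{(-1)^{1}}{0!}$. (One might also check $n=1$: $\Delta_1(i)=\binom{i}{1}=i$, degree $1$, leading coefficient $1=\frac{(-1)^2}{1!}$.) Then, assuming the statement holds for all $0\le k<n$, I would examine the defining recurrence
$$\Delta_n(i)=\binom{i}{n}-\sum_{k=1}^{n-1}\Delta_k(i)\binom{i}{n-k}.$$
Each $\binom{i}{m}$ is a polynomial in $i$ of degree exactly $m$ with leading coefficient $\frac{1}{m!}$, so $\binom{i}{n}$ contributes degree $n$ with leading coefficient $\frac{1}{n!}$. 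For each term in the sum, $\Delta_k(i)$ has degree $k$ (by the induction hypothesis) and $\binom{i}{n-k}$ has degree $n-k$, so the product has degree exactly $n$ — meaning every summand is a polynomial of degree exactly $n$, and there is potential cancellation of the top-degree terms that must be controlled.

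The key computation, and the step I expect to be the main obstacle, is identifying the coefficient of $i^n$ in $\Delta_n(i)$. The leading coefficient of $\Delta_k(i)\binom{i}{n-k}$ is $\frac{(-1)^{k+1}}{k!}\cdot\frac{1}{(n-k)!}$, so the coefficient of $i^n$ on the right-hand side is
$$\frac{1}{n!}-\sum_{k=1}^{n-1}\frac{(-1)^{k+1}}{k!\,(n-k)!}=\frac{1}{n!}+\sum_{k=1}^{n-1}\frac{(-1)^{k}}{k!\,(n-k)!}=\frac{1}{n!}\sum_{k=1}^{n}\binom{n}{k}(-1)^k+\frac{1}{n!},$$
where I absorbed the $k=n$ term $\frac{(-1)^n}{n!}$ into the sum and compensated. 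Since $\sum_{k=0}^{n}\binom{n}{k}(-1)^k=0$ for $n\ge 1$, we get $\sum_{k=1}^{n}\binom{n}{k}(-1)^k=-1$, hence the coefficient of $i^n$ equals $\frac{1}{n!}(-1)+\frac{1}{n!}=0$ — wait, that would give degree $<n$, contradicting what we want, so I must re-examine the bookkeeping: the correct grouping is $\frac{1}{n!}-\sum_{k=1}^{n-1}\frac{(-1)^{k+1}}{k!(n-k)!}$, and writing $\frac{1}{k!(n-k)!}=\frac{1}{n!}\binom{n}{k}$ this is $\frac{1}{n!}\bigl(1-\sum_{k=1}^{n-1}(-1)^{k+1}\binom{n}{k}\bigr)=\frac{1}{n!}\bigl(1+\sum_{k=1}^{n-1}(-1)^{k}\binom{n}{k}\bigr)$. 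Using $\sum_{k=0}^{n}(-1)^k\binom{n}{k}=0$, the inner sum $\sum_{k=1}^{n-1}(-1)^k\binom{n}{k}=-\binom{n}{0}-(-1)^n\binom{n}{n}=-1-(-1)^n$, so the coefficient is $\frac{1}{n!}\bigl(1-1-(-1)^n\bigr)=\frac{-(-1)^n}{n!}=\frac{(-1)^{n+1}}{n!}$, which is nonzero. This confirms both that the degree is exactly $n$ (no unwanted cancellation beyond producing the stated leading term) and that the leading coefficient is $\frac{(-1)^{n+1}}{n!}$, completing the induction.

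Finally I would note that the argument is entirely elementary once the binomial identity $\sum_{k=0}^n(-1)^k\binom{n}{k}=0$ (valid for $n\ge 1$) is invoked; the only subtlety is the careful handling of which terms appear in the truncated sum $\sum_{k=1}^{n-1}$ versus the full sum $\sum_{k=0}^{n}$, and making sure the $k=0$ and $k=n$ boundary terms are accounted for correctly.
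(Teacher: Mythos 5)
Your proof is correct and follows essentially the same route as the paper: strong induction, expand the defining recurrence, extract the coefficient of $i^n$ as $\frac{1}{n!}\bigl(1-\sum_{k=1}^{n-1}(-1)^{k+1}\binom{n}{k}\bigr)$, and close with the alternating binomial identity (the paper states the parity-dependent value of the partial sum directly; you derive it from $\sum_{k=0}^{n}(-1)^k\binom{n}{k}=0$, which is the same one-line step). One small slip worth flagging: your $n=0$ base case asserts $1=\frac{(-1)^{1}}{0!}$, which is false ($\frac{(-1)^{1}}{0!}=-1$); in fact the lemma as stated fails at $n=0$ since $\Delta_0(i)\equiv 1$, and the paper tacitly treats $n=1$ as the base case, as you also do, so the argument stands once you drop the $n=0$ check.
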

 
\begin{proof}
We prove it by induction on $n\geq0$.
The lemma is obviously true for $n=1$ because $\Delta_1(i)=i$.\\
Assume that we have verified the induction hypothesis for $1\leq k<n$. Let us prove it for $k=n$.
Denote by $\delta_{n}$ the leading coefficient of $\Delta_{n}$.
The leading coefficient in $i$ of $\binom{i}{k}$ is $\frac{1}{k!}$ for all $k\in\Z_{+}$. Combining with the induction hypothesis, one gets:
$$\delta_n=\frac{1}{n!}-\sum_{k=1}^{n-1}\frac{(-1)^{k+1}}{(n-k)!k!}=\frac{1}{n!}\left(1-\sum_{k=1}^{n-1}(-1)^{k+1}\binom{n}{k}\right).$$

Now, it is easy to check that: $$\sum_{k=1}^{n-1}\binom{n}{k}(-1)^{k+1}=\begin{cases}
                                                                                                    0\text{ if }n\text{ is odd,}\\
2\text{ if }n\text{ is even.}
                                                                                                   \end{cases}$$
This yields: $$\delta_n=\begin{cases}
                               \frac{1}{n!}\text{ if }n\text{ is odd,}\\
-\frac{1}{n!}\text{ if }n\text{ is even.}\\
                                                                                                   \end{cases}.$$
This ends the induction and the proof of the lemma.
\end{proof}

In order to fully understand the interest of introducing the sequence $\Delta_{n}$, we also need the following lemma:
\begin{lem}\label{lemprecomb}
 Let $i,n\in\N$ with $i\geq n$ and let $(u_k)_{1\leq k\leq n}$ be a sequence of real numbers. For every $1\leq k\leq n$ define $L_k:=\sum_{j=0}^{n-k}\binom{i}{j}u_{k+j}$. Then, $L_k=u_k+\sum_{j=1}^{n-k}\Delta_{j}(i)L_{k+j}$.
\end{lem}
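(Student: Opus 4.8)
The plan is to prove the identity by a direct manipulation of double sums, using only the recursive definition of $\Delta_n$ recalled in Lemma \ref{lemcomb}. First I would dispose of the boundary case $k=n$: there $L_n=u_n$ and the sum $\sum_{j=1}^{0}$ is empty, so the claim is trivial. For $1\le k<n$ I would substitute the defining formula $L_{k+j}=\sum_{l=0}^{n-k-j}\binom{i}{l}u_{k+j+l}$ into $\sum_{j=1}^{n-k}\Delta_j(i)L_{k+j}$ and then reorganize the resulting double sum: grouping the terms by the index $m=j+l$ of the variable $u_{k+m}$ they multiply, one sees that $m$ runs over $\{1,\dots,n-k\}$ and that for fixed $m$ the index $j$ runs over $\{1,\dots,m\}$ (with $l=m-j\ge0$). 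Hence the coefficient of $u_{k+m}$ in $\sum_{j=1}^{n-k}\Delta_j(i)L_{k+j}$ equals $\sum_{j=1}^{m}\Delta_j(i)\binom{i}{m-j}$.

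The key step is to recognize that this coefficient is exactly $\binom{i}{m}$. Isolating the term $j=m$ gives $\sum_{j=1}^{m}\Delta_j(i)\binom{i}{m-j}=\Delta_m(i)+\sum_{j=1}^{m-1}\Delta_j(i)\binom{i}{m-j}$, and the defining relation $\Delta_m(i)=\binom{i}{m}-\sum_{j=1}^{m-1}\Delta_j(i)\binom{i}{m-j}$ collapses the right-hand side to $\binom{i}{m}$. Therefore $\sum_{j=1}^{n-k}\Delta_j(i)L_{k+j}=\sum_{m=1}^{n-k}\binom{i}{m}u_{k+m}$, and adding $u_k=\binom{i}{0}u_k$ reconstitutes $u_k+\sum_{j=1}^{n-k}\Delta_j(i)L_{k+j}=\sum_{m=0}^{n-k}\binom{i}{m}u_{k+m}=L_k$, which is the desired identity.

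I do not expect a genuine obstacle here; the only point requiring care is the bookkeeping of index ranges when exchanging the order of summation, and it is worth noting that the hypothesis $i\ge n$ plays no role in the algebraic identity itself (it is present only because these $L_k$ are the quantities that later arise from iterating a real Jordan block, where $\binom{i}{\,\cdot\,}$ are honest binomial coefficients). An essentially equivalent alternative would be a downward induction on $k$ from $k=n$ to $k=1$, but the direct double-sum computation above is shorter and self-contained.
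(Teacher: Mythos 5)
Your proof is correct, but it takes a genuinely different route from the paper's. The paper argues by induction on $n$: for the inductive step with $1\le k<n$, it introduces the truncated sums $\mathcal{L}_k:=\sum_{j=0}^{n-k-1}\binom{i}{j}u_{k+j}=L_k-\binom{i}{n-k}u_n$, applies the induction hypothesis (valid for sequences of length $n-1$) to get $\mathcal{L}_k=u_k+\sum_{j=1}^{n-k-1}\Delta_j(i)\mathcal{L}_{k+j}$, and then re-expands $\mathcal{L}_{k+j}$ in terms of $L_{k+j}$ and $u_n$, invoking the recursion defining $\Delta_{n-k}(i)$ exactly once to collapse the leftover coefficient of $u_n$ to $\Delta_{n-k}(i)$. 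Your argument bypasses induction entirely: you substitute the definition of $L_{k+j}$ into $\sum_{j=1}^{n-k}\Delta_j(i)L_{k+j}$, exchange the order of summation via $m=j+l$, and observe that the resulting coefficient of $u_{k+m}$, namely $\sum_{j=1}^{m}\Delta_j(i)\binom{i}{m-j}$, telescopes to $\binom{i}{m}$ by the defining recursion for $\Delta_m(i)$, applied once for each $m$. Both proofs hinge on the same recursion for $\Delta$; yours applies it uniformly inside a single double-sum manipulation, which is shorter and self-contained, whereas the paper's inductive version is more indirect but perhaps easier to arrive at without spotting the sum-swap. Your remark that the hypothesis $i\ge n$ plays no role in the purely algebraic identity is also accurate.
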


\begin{proof}
 
Once again, we prove this result by induction on $n$.
For $n=1$, the result is straightforward since $L_1=\sum_{j=0}^{1-1}\binom{i}{j}u_{1+j}=u_1$.\\
Now, assume that the induction hypothesis is true for any natural number strictly smaller than $n$ and let us prove it for $n$.
For $1\leq k<n$, set $\mathcal{L}_{k}=\sum_{j=0}^{n-k-1}\binom{i}{j}u_{k+j}=L_k-\binom{i}{n-k}u_n$. Then the induction hypothesis gives:
$$\mathcal{L}_{k}=u_k+\sum_{j=1}^{n-k-1}\Delta_{j}(i)\mathcal{L}_{k+j}$$
 and so for $1\leq k\leq n-1$: 
$$L_k=\mathcal{L}_{k}+\binom{i}{n-k}u_n=u_k+\sum_{j=1}^{n-k-1}\Delta_{j}(i)\mathcal{L}_{k+j}+\binom{i}{n-k}u_n.$$
Finally, use the definition of $\mathcal{L}_k$ and $\Delta_{n-k}(i)$ and note that $L_{n}=u_n$, we obtain:
\begin{align*}
L_k&=u_k+\sum_{j=1}^{n-k-1}\Delta_{j}(i)\left(L_{k+j}-\binom{i}{n-k-j}u_n\right)+\binom{i}{n-k}u_n&\\
&=u_k+\sum_{j=1}^{n-k-1}\Delta_{j}(i)L_{k+j}+\left(\binom{i}{n-k}-\sum_{j=1}^{n-k-1}\Delta_{j}(i)\binom{i}{n-k-j}\right)u_n&\\
&=u_k+\sum_{j=1}^{n-k-1}\Delta_{j}(i)L_{k+j}+\Delta_{n-k}(i)u_n&\\
&=u_k+\sum_{j=1}^{n-k}\Delta_{j}(i)L_{k+j}&
\end{align*}
This completes the proof of this lemma.
\end{proof}

Here comes now the generalisation of Proposition \ref{propjordpas2}, its proof is of significant importance to an understanding of the mechanisms involved in later proofs.
\begin{prop}\label{propblocjordr}
Let $N>1$ and $\theta\in\R$, then $J_N:=\left(\begin{array}{cccccc}
R_{\theta}&R_{\theta}&0&\cdots&0\\
0&R_{\theta}&R_{\theta}&0\cdots&0\\
0&0&\ddots&\ddots&\vdots\\
\vdots&&\ddots&\ddots&R_{\theta}\\
0&\cdots&0&0&R_{\theta}\\
\end{array}
\right)$ is not $N$-supercyclic on $\R^{2N}$ .
\end{prop}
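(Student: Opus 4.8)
The plan is to argue by contradiction, assuming $J_N$ is $N$-supercyclic with supercyclic subspace $M=\Span\{x^1,\dots,x^N\}$, and to exploit the explicit formula for powers of a Jordan block given in the preliminaries: $J_N^n$ has blocks $\binom{n}{j}R_\theta^n$ on the $j$-th superdiagonal. First I would reduce the basis $\{x^1,\dots,x^N\}$ with respect to $J_N$ using Theorem~\ref{theoreductiondebase}; here $\gamma=1$, $\tau_1=2$, $\rho_1=\rho=N$, so each $\chi_i^j\in\R^2$, and the reduction produces a basis $\{y^1,\dots,y^N\}$ together with the non-decreasing sequence $(\kappa_i)$ with $\kappa_0=1$, $\kappa_N=N+1$, each step increasing $\kappa$ by $0$, $1$ or $2$. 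As in Proposition~\ref{propmatprimk}, the first move is to show that no step can be trivial: if $\kappa_p=\kappa_{p+1}$ for some $p<N$ (equivalently $\dim\Span\{\Lambda_p\}=0$), then the component $\chi_{N-p}$ of every iterate $J_N^n\big(\sum_j\lambda_j y^j\big)$ only sees the coefficients $\lambda_1,\dots,\lambda_{\kappa_p-1}$, and I would run the same descending induction as in Proposition~\ref{propmatprimk}, using that $R_\theta^n$ is an isometry, to force all those $\lambda_j^{(n_i)}\to 0$, contradicting the requirement that this component converge to a unit vector. However $(\kappa_p)$ strictly increasing from $1$ to $N+1$ in $N$ steps is impossible unless every step is $+1$, i.e. $\dim\Span\{\Lambda_p\}=1$ at every stage.

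So the crux is the case where the reduced basis is ``generic'': $\kappa_p=p+1$ for all $p$, meaning $y^j$ has $\chi_{N}^{j}=\dots=\chi_{N-j+2}^{j}=0$ and $\chi_{N-j+1}^{j}\neq 0$ (an inverse-staircase shape with one nonzero $\R^2$-entry at the ``corner'' of each row, the rest below the anti-diagonal zero). Writing the iterate $J_N^{n}\big(\sum_{j=1}^N \lambda_j^{(n)} y^j\big)$ and reading off its $k$-th bi-component, one gets, for each $k$, an expression of the shape $R_\theta^{n}\big(\sum_{j}\lambda_j^{(n)}\sum_{l\geq 0}\binom{n}{l}\chi_{k+l}^{j}\big)$. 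This is exactly the setting of Lemma~\ref{lemprecomb}: after stripping off $R_\theta^n$ (an isometry), the bi-components $L_k$, $k=N,N-1,\dots,1$, satisfy $L_k = u_k + \sum_{j\geq 1}\Delta_j(n)L_{k+j}$, where $u_k$ is the ``diagonal'' contribution $\sum_j\lambda_j^{(n)}\chi_k^{j}$ (which by the staircase shape only involves the $\lambda_j$ with $j\le$ something). By $N$-supercyclicity I may choose the target so that every $L_k\to$ a fixed vector, e.g. all of comparable size; in particular each $L_k$ stays bounded. Then the relation $L_k-u_k=\sum_{j\ge 1}\Delta_j(n)L_{k+j}$ with $L_{k+j}$ bounded and $\Delta_j(n)$ a polynomial in $n$ of degree $j\ge 1$ (Lemma~\ref{lemcomb}) forces, descending from $k=N$ where $L_N=u_N$, that the $\lambda_j^{(n_i)}$ must go to zero fast enough to kill the polynomial blow-up — and then feeding this back up the chain I would reach a contradiction with the requirement that, say, $L_1\to$ a unit vector, exactly as the division-by-$n_i$ trick works in Proposition~\ref{propjordpas2} (which is the case $N=2$ of this very statement).

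The main obstacle I anticipate is bookkeeping the dependence of the ``diagonal terms'' $u_k$ on the $\lambda_j^{(n)}$ through the staircase structure and extracting, in the right order, the rates at which the various $\lambda_j^{(n_i)}$ tend to $0$: one wants to show inductively (on $k$ descending) that $n^{N-k}\lambda_j^{(n_i)}$ is controlled, so that when one finally reaches the top bi-component the surviving term cannot be a unit vector. The isometry of $R_\theta$ is what lets one ignore the rotation throughout; the genuinely new ingredient over Proposition~\ref{propmatprimk} is the polynomial coupling between bi-components, which is precisely why Lemmas~\ref{lemcomb} and~\ref{lemprecomb} were isolated. A secondary technical point is handling, within the generic case, the sub-case where some $\chi_{N-j+1}^{j}$ and $\chi_{N-j}^{j+1}$ happen to be linearly dependent (as in the passage in Proposition~\ref{propjordpas2} where one shows two planar vectors must be independent or else reduces further); I would dispose of it by the same ``choose $U,V$ with no common line through the origin'' argument used there.
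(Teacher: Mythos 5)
Your proposal reaches for the right tools — Theorem~\ref{theoreductiondebase}, Lemmas~\ref{lemcomb} and~\ref{lemprecomb}, and division by growing polynomials — but the logical architecture is inverted relative to what the proof actually needs, and the ``first move'' has a genuine gap.

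First, the claim that a trivial step $\kappa_p=\kappa_{p+1}$ leads to a contradiction ``by the same descending induction as in Proposition~\ref{propmatprimk}'' does not go through. In Proposition~\ref{propmatprimk} the bi-components are \emph{uncoupled}: each line $(L_l)$ is a pure isometry of $u_l$, so $(L_l)\to 0$ immediately forces the relevant $\lambda_j^{(n_i)}\to 0$ and the induction descends cleanly. For $J_N$, however, Lemma~\ref{lemprecomb} gives $L_{N-k}=u_{N-k}+\sum_{j=1}^{k}\Delta_j(n_i)L_{N-k+j}$, and even if you already know $L_{N-k+j}\to 0$ and $\lambda_j^{(n_i)}\to 0$, the products $\Delta_j(n_i)L_{N-k+j}$ are indeterminate because $\Delta_j$ grows polynomially. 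So $\lambda_j^{(n_i)}\to 0$ alone does \emph{not} force $L_{N-p}\to 0$, and there is no contradiction. The paper instead proves the quantitatively sharper statement $\frac{\lambda_j^{(n_i)}}{\Delta_k(n_i)}\to 0$ for $j<\kappa_k$ by dividing $(L_{N-k})$ by $\Delta_{k+1}(n_i)$ at each step, and then uses a \emph{separate} final division: it places the nonzero target at the last bi-component $L_N$, divides $(L_{N-p})$ by $\Delta_p(n_i)$ at the trivial step $p$, and finds the bounded-away-from-zero term $L_N$ surviving with coefficient $\Delta_p(n_i)/\Delta_p(n_i)=1$, which is the contradiction. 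Your choice of target at $L_{N-p}$ and the absence of the rate-controlled induction both lose this.

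Second, and more fundamentally, you are trying to prove the wrong dichotomy. You claim that ``no step can be trivial'' and conclude every step is $+1$, reducing to a ``generic'' staircase case. But the paper proves the opposite and for a good reason: since Proposition~\ref{propjordpas2} says $J_2$ is not $2$-supercyclic, the projection of $M$ onto the last four coordinates must span a space of dimension at least $3$, and in the reduced basis this projection is carried by $y^1,\dots,y^{\kappa_2-1}$; hence $\kappa_2\geq 4$. Combined with $\kappa_N=N+1$ and increments of at most $2$, this \emph{forces} the existence of a $p$ with $2\leq p\leq N-1$ and $\kappa_p=\kappa_{p+1}$. Thus for $N\geq 3$ a trivial step always exists, your ``generic case'' never actually occurs, and the single division-based contradiction above finishes the proof. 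Your proposal never invokes the $N=2$ case as a lower bound on $\kappa_2$ (you only reference Proposition~\ref{propjordpas2} for a secondary planar-independence point), so this crucial structural input is missing, and the generic-case analysis you sketch is left as a vague gesture rather than a proof.
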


\begin{proof}
As we already noticed, Proposition \ref{propjordpas2} proves the case $N=2$. Let then $N\geq3$ and assume to the contrary that $J_N$ is $N$-supercyclic.
Let also $M=\Span\{x^{1},\ldots,x^{N}\}$ be a $N$-supercyclic subspace which basis $x^1,\ldots,x^N$ is reduced.
Then $\kappa_N=N+1$ as we already pointed out in Theorem \ref{theoreductiondebase}.
Moreover, Proposition \ref{propjordpas2} claims that $J_2$ is not 2-supercyclic, thus the three following vectors $\small\left(\begin{array}{c}\chi_{N-1}^{1}\\\chi_{N}^{1}\\\end{array}\right),\left(\begin{array}{c}\chi_{N-1}^{2}\\\chi_{N}^{2}\\\end{array}\right),\left(\begin{array}{c}\chi_{N-1}^{3}\\\chi_{N}^{3}\\\end{array}\right)$ span a subspace of dimension 3, yielding $\kappa_2\geq4$.
In addition, $N$-supercyclicity of $M$ implies the existence of a sequence of natural numbers $(n_i)_{i\in\N}$ and $\left((\lambda_{1}^{(n_i)})_{i\in\N},\ldots,(\lambda_{N}^{(n_i)})_{i\in\N}\right)\in\left(\R^{\N}\right)^{N}$ such that:
 $$T^{n_i}\left(\sum_{j=1}^{N}\lambda_{j}^{(n_i)}x^j\right)=\left(\begin{array}{c}
R_{\theta}^{n_i}\sum_{k=0}^{N-1} \binom{n_i}{k} \left(\sum_{j=1}^{N}\lambda_{j}^{(n_i)}\left(\begin{array}{c}x_{2k+1}^{j}\\x_{2(k+1)}^{j}\\\end{array}\right)
\right)\\
R_{\theta}^{n_i}\sum_{k=0}^{N-2} \binom{n_i}{k} \left(\sum_{j=1}^{N}\lambda_{j}^{(n_i)}\left(\begin{array}{c}x_{2(k+1)+1}^{j}\\x_{2(k+2)}^{j}\\\end{array}\right)
\right)\\
\vdots\\
R_{\theta}^{n_i}\sum_{j=1}^{N}\lambda_{j}^{(n_i)}\left(\begin{array}{c}x_{2N-1}^{j}\\x_{2N}^{j}\\\end{array}\right)\\
\end{array}\right)\underset{i\to+\infty}{\longrightarrow}\left(\begin{array}{c} \left(\begin{array}{c}0\\0\\\end{array}\right)\\
\vdots\\
 \left(\begin{array}{c}0\\1\\\end{array}\right)\\\end{array}\right).$$
Denote by $(L_1),\ldots,(L_N)$ the lines appearing in $T^{n_i}\left(\sum_{j=1}^{N}\lambda_{j}^{(n_i)}x^j\right)$ above, and define $(u_k)=\sum_{j=1}^{N}\lambda_{j}^{(n_i)}\left(\begin{array}{c}x_{2k-1}^{j}\\x_{2k}^{j}\\\end{array}\right)$. Remark that the $(L_k)$'s and $(u_k)$'s depend on $i$.
Then, using Lemma \ref{lemprecomb} the preceding identity implies:
\begin{equation}\label{iter0}
\begin{cases}
\Vert(u_1)+\sum_{j=1}^{N-1}\Delta_j(n_i) (L_{j+1})\Vert&\underset{i\to+\infty}{\longrightarrow}0\\
&\ \ \ \vdots\\
\Vert(u_k)+\sum_{j=1}^{N-k}\Delta_j(n_i) (L_{j+k})\Vert&\underset{i\to+\infty}{\longrightarrow}0\\
&\ \ \ \vdots\\
\Vert(u_N)\Vert&\underset{i\to+\infty}{\longrightarrow}1\\
\end{cases}
\end{equation}
where $\Delta_j$ is defined in Lemma \ref{lemcomb}.
\medskip

We come now to the key point of the proof: we prove by induction on $k$ that $\frac{\lambda_{j}^{(n_i)}}{\Delta_{k}(n_i)}\underset{i\to+\infty}{\longrightarrow}0$, for every $1\leq k\leq N-1$ and every $1\leq j\leq \kappa_k-1$.\\
If $k=1$, then divide $(L_N)$ by $\Delta_{1}(n_i)$ and take the limit:
$$\left\Vert\sum_{j=\kappa_0}^{\kappa_1-1}\frac{\lambda_{j}^{(n_i)}}{\Delta_{1}(n_i)}\chi_{N}^{j}\right\Vert\underset{i\to+\infty}{\longrightarrow}0.$$
In addition the fact that $\{\chi_{N}^{j}\}_{\kappa_0\leq j\leq \kappa_1-1}$ is linearly independent (but not empty since $\kappa_2\geq4$) leads to: $$\frac{\lambda_{j}^{(n_i)}}{\Delta_{1}(n_i)}\underset{i\to+\infty}{\longrightarrow}0\text{ for any }\kappa_0\leq j\leq \kappa_1-1.$$
We now assume that the induction hypothesis is true for any natural number smaller than $k$ and we prove it for $k+1$.
First, divide $(L_{N-k})$ by $\Delta_{k+1}(n_i)$ and take the limit:
$$\left\Vert \sum_{j=\kappa_0}^{\kappa_k-1}\frac{\lambda_{j}^{(n_i)}}{\Delta_{k+1}(n_i)}\chi_{N-k}^{j}+\sum_{j=\kappa_k}^{\kappa_{k+1}-1}\frac{\lambda_{j}^{(n_i)}}{\Delta_{k+1}(n_i)}\chi_{N-k}^{j}+\sum_{j=1}^{k}\frac{\Delta_{j}(n_i)}{\Delta_{k+1}(n_i)}(L_{N-k+j})\right\Vert\underset{i\to+\infty}{\longrightarrow}0.$$
The induction hypothesis provides that the sum on the left of the preceding line converges to 0. Moreover, as the sequence given by the $j$-th line $(L_j)$ is bounded for all $1\leq j\leq N$ and as Lemma \ref{lemcomb} gives $\deg(\Delta_{k+1})>\deg(\Delta_{j})$ for every $1\leq j\leq k$, then the sum on the right of the previous line converges also to 0 providing:
$$\left\Vert\sum_{j=\kappa_k}^{\kappa_{k+1}-1}\frac{\lambda_{j}^{(n_i)}}{\Delta_{k+1}(n_i)}\chi_{N-k}^{j}\right\Vert\underset{i\to+\infty}{\longrightarrow}0.$$
Moreover, $\{\chi_{N-k}^{j}\}_{\kappa_k\leq j\leq \kappa_{k+1}-1}$ is either linearly independent or empty, and taking this into account in the line above and using Lemma \ref{lemcomb}, we conclude that:
$$\frac{\lambda_{j}^{(n_i)}}{\Delta_{k+1}(n_i)}\underset{i\to+\infty}{\longrightarrow}0\text{ for any }\kappa_k\leq j\leq \kappa_{k+1}-1.$$
So by induction hypothesis and as $\deg(\Delta_{k+1})>\deg(\Delta_{j})$ for every $1\leq j\leq k$, then for every $1\leq j\leq \kappa_{k+1}-1$, $\frac{\lambda_{j}^{(n_i)}}{\Delta_{k+1}(n_i)}\underset{i\to+\infty}{\longrightarrow}0$.
\medskip

We now come back to the proof of the proposition. As we claimed before, notice that $\kappa_2\geq4$ and $\kappa_{N}=N+1$. It follows that there exists $2\leq p\leq N-1$ such that $\kappa_p=\kappa_{p+1}$.
Divide then $(L_{N-p})$ by $\Delta_{p}(n_i)$ to get:
$$\left\Vert \sum_{j=\kappa_0}^{\kappa_{p+1}-1}\frac{\lambda_{j}^{(n_i)}}{\Delta_{p}(n_i)}\chi_{N-k}^{j}+\sum_{j=1}^{p}\frac{\Delta_{j}(n_i)}{\Delta_{p}(n_i)}(L_{N-p+j})\right\Vert\underset{i\to+\infty}{\longrightarrow}0.$$
The sum on the left above tends to 0 because $\frac{\lambda_{j}^{(n_i)}}{\Delta_{p}(n_i)}\underset{i\to+\infty}{\longrightarrow}0$ for any $1\leq j\leq \kappa_p-1$. Furthermore, combine Lemma \ref{lemcomb} and the boundedness of the sequence given by the $k$-th line $(L_k)$ to deal with the second sum:
$$\left\Vert\frac{(-1)^{p+1}}{p!}(L_{N})\right\Vert\underset{i\to+\infty}{\longrightarrow}0.$$
This contradicts the convergence $\Vert(L_N)\Vert\underset{i\to+\infty}{\longrightarrow}1$ given in (\ref{iter0}). So, $J_N$ is not $N$-supercyclic.

\end{proof}

\subsection{Sum of Jordan blocks with different moduli}

Later, we will need to be able to distinguish the behaviour of blocks with different moduli. The main idea is that all the coefficients we introduced in older blocks do not have a significant influence in the new block.
The following lemma is a technical tool towards this idea.

\begin{lem}\label{lemconrec}
Let $h\in\Z_{+}$ and $\gamma,m,N\in\N$ with $h<m$. Let also $T=a\mathcal{C}$ be an operator on $\R^N$ with $0<\vert a\vert<1$ and where $\mathcal{C}=\oplus_{i=1}^{\gamma}\mathcal{B}_i$, $\mathcal{B}_i$ being a Jordan block of modulus 1 with $\mathcal{A}_i=1$ or $R_{\theta_i}$.
Let $M$ be an $(m-h)$-dimensional subspace and $x^1,\ldots,x^m\in\R^N$ where $x^{h+1},\ldots,x^m$ denotes a reduced basis of $M$ (adapted to $T$ with Theorem \ref{theoreductiondebase}), and let also $0<\vert a\vert<\vert b\vert\leq1$.
Assume that there exist a strictly increasing sequence $(n_i)_{i\in\N}$ and $m$ real sequences $(\lambda_{1}^{(n_i)})_{i\in\N},\ldots,(\lambda_{m}^{(n_i)})_{i\in\N}$ and $q\in\Z_{+}$ such that $\frac{b^{n_i} \lambda_{j}^{(n_i)}}{n_{i}^{q}}\underset{i\to+\infty}{\longrightarrow}0$ for any $1\leq j\leq h$ and $T^{n_i}(\sum_{j=1}^{m}\lambda_{j}^{(n_i)}x^j)\underset{i\to+\infty}{\longrightarrow}0$.
Then, there exists $q'\in\Z_{+}$ satisfying $\frac{a^{n_i}\lambda_{j}^{(n_i)}}{n_{i}^{q'}}\underset{i\to+\infty}{\longrightarrow}0$ for any $1\leq j\leq m$.
\end{lem}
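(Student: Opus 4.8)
The plan is to adapt the binomial-inversion argument used in the proof of Proposition~\ref{propblocjordr} to this relative situation. The key observation is that we are only required to divide the $\lambda_j^{(n_i)}$ by some \emph{fixed} power of $n_i$, so the polynomial blow-up of the $\Delta_j$'s produced by Lemma~\ref{lemprecomb} is harmless.

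First I would dispose of the indices $1\le j\le h$. Since $0<|a|<|b|$, the ratio $(a/b)^{n_i}$ tends to $0$ geometrically, so $\frac{a^{n_i}\lambda_j^{(n_i)}}{n_i^{q}}=(a/b)^{n_i}\cdot\frac{b^{n_i}\lambda_j^{(n_i)}}{n_i^{q}}\to 0$ by hypothesis; a fortiori $\frac{a^{n_i}\lambda_j^{(n_i)}}{n_i^{q'}}\to 0$ for every $q'\ge q$. Thus the desired conclusion already holds on the non-reduced part $x^1,\dots,x^h$ of the basis, and it remains to control $x^{h+1},\dots,x^m$, to which the staircase of Theorem~\ref{theoreductiondebase} applies, with $\kappa_0=h+1$ and $\kappa_\rho=m+1$.

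Next, write $T=a\mathcal{C}$ with $\mathcal{C}=\oplus_{p=1}^{\gamma}\mathcal{B}_p$ and expand the iterate $T^{n_i}(\sum_{j=1}^m\lambda_j^{(n_i)}x^j)$ block by block. Inside the block $\mathcal{B}_p$ of relative size $\rho_p$, its $l$-th line equals $a^{n_i}\mathcal{A}_p^{n_i}\sum_{k=0}^{\rho_p-l}\binom{n_i}{k}v^{(p)}_{l+k}$, where $v^{(p)}_s=\sum_{j=1}^m\lambda_j^{(n_i)}\chi^j_{\bullet}$ is the corresponding combination of $\chi$-components at position $s$ of the block and $\mathcal{A}_p^{n_i}$ is an isometry. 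Applying Lemma~\ref{lemprecomb} coordinatewise inside each block inverts these binomial sums: $a^{n_i}v^{(p)}_l=\mathcal{A}_p^{-n_i}(L_l)-\sum_{j=1}^{\rho_p-l}\Delta_j(n_i)\,\mathcal{A}_p^{-n_i}(L_{l+j})$, where $L_s$ denotes the $s$-th line of $T^{n_i}(\sum_j\lambda_j^{(n_i)}x^j)$, which tends to $0$ and in particular stays bounded. Since $\deg\Delta_j=j\le\rho_p-1<\rho$ by Lemma~\ref{lemcomb}, dividing by $n_i^{\rho}$ yields $\frac{a^{n_i}}{n_i^{\rho}}v^{(p)}_l\to 0$; equivalently, for every $\chi$-index $s$, $\frac{a^{n_i}}{n_i^{\rho}}\sum_{j=1}^m\lambda_j^{(n_i)}\chi^j_s\to 0$. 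Putting $q':=\max(q,\rho)$ and subtracting the contribution of the indices $j\le h$ (controlled in the first step, $\chi^j_s$ being fixed) leaves, for every $\chi$-index $s$, $\frac{a^{n_i}}{n_i^{q'}}\sum_{j=h+1}^m\lambda_j^{(n_i)}\chi^j_s\to 0$.

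Finally I would run the peeling induction along the staircase, just as in the proof of Proposition~\ref{propmatprimk}. For $i=0,1,\dots,\rho-1$, the vanishing property (e) of Theorem~\ref{theoreductiondebase} gives $\chi^j_{\rho-i}=0$ whenever $j\ge\kappa_{i+1}$, so at the $\chi$-index $\rho-i$ the sum above runs only over $h+1\le j<\kappa_{i+1}$. Assuming inductively that $\frac{a^{n_i}\lambda_j^{(n_i)}}{n_i^{q'}}\to 0$ for $h+1\le j<\kappa_i$, one gets $\frac{a^{n_i}}{n_i^{q'}}\sum_{\kappa_i\le j<\kappa_{i+1}}\lambda_j^{(n_i)}\chi^j_{\rho-i}\to 0$, and since $\{\chi^j_{\rho-i}:\kappa_i\le j<\kappa_{i+1}\}$ is empty or linearly independent by property (c) — and has at most two elements — this forces $\frac{a^{n_i}\lambda_j^{(n_i)}}{n_i^{q'}}\to 0$ for $\kappa_i\le j<\kappa_{i+1}$. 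Since $\kappa_\rho=m+1$, after these $\rho$ steps every $j\in\{h+1,\dots,m\}$ is covered, and combining with the first step gives $\frac{a^{n_i}\lambda_j^{(n_i)}}{n_i^{q'}}\to 0$ for all $1\le j\le m$ with $q'=\max(q,\rho)$. The only genuine difficulty is the bookkeeping of $\chi$-indices and $\kappa$-ranges across the different blocks; the analytic input — geometric decay of $(a/b)^{n_i}$, polynomial growth of $\Delta_j$ beaten by $n_i^{\rho}$, and the isometry of each $\mathcal{A}_p$ — is routine once the set-up is in place.
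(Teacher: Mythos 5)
Your proof is correct, and it takes a mildly but genuinely different route from the paper's. The paper runs the decreasing induction on the line index $l$ working directly with the raw binomial coefficients: at each step it divides $(L_l)$ by $n_i^{q'}$ (the exponent from the previous step) and then by a further $n_i^{d}$, using boundedness of $\binom{n_i}{j}/n_i^{d}$ together with the already-established bounds to kill the cross-terms coming from lower lines of the same block; the exponent $q'$ therefore grows incrementally and is not tracked explicitly. You instead perform the full binomial inversion up front via Lemma~\ref{lemprecomb} — the same trick the paper uses in Proposition~\ref{propblocjordr} and Lemma~\ref{lemconrec2} but, interestingly, \emph{not} in Lemma~\ref{lemconrec} — obtaining $a^{n_i}v^{(p)}_l=\mathcal{A}_p^{-n_i}L_l-\sum_{j}\Delta_j(n_i)\mathcal{A}_p^{-n_i}L_{l+j}$ with each $L_s\to 0$ and $\deg\Delta_j\leq\rho_p-1<\rho$, so a single division by $n_i^{\rho}$ gives the uniform estimate $\frac{a^{n_i}}{n_i^{\rho}}v^{(p)}_l\to 0$ for all block positions; the staircase peeling then runs with the \emph{fixed} exponent $q'=\max(q,\rho)$. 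The two approaches are logically equivalent, but yours decouples the binomial bookkeeping from the staircase induction and produces an explicit value of $q'$, whereas the paper's argument interleaves the two and only shows existence of some $q'$. One small point of rigour you handle correctly but should keep in mind: in the inversion step it is the product $\Delta_j(n_i)\cdot\tilde{L}_{l+j}/n_i^{\rho}$ that must vanish, which follows because $\tilde{L}_{l+j}$ is bounded (indeed tends to $0$) and $\Delta_j(n_i)/n_i^{\rho}\to 0$ for $j<\rho$; neither factor alone suffices.
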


\begin{proof}
Denote as usual $\tau_i \rho_i$ the size of the block $\mathcal{B}_i$ with $\tau_i=1,2$ and $\rho=\sum_{i=1}^{\gamma}\rho_i$. Then reducing $T$ and keeping in mind that the last $(m-h)$ vectors from $\{x_{1},\ldots,x_{m}\}$ are reduced, one may obtain the following equality

$${\scriptstyle T^{n_i}\left(\sum_{j=1}^{m}\lambda_{j}^{(n_i)}x^j\right)=\begin{cases}
                                      a^{n_i}\mathcal{A}_{1}^{n_i}\left(\sum_{j=1}^{m}\lambda_{j}^{(n_i)}\chi_{1}^{j}+\sum_{j=1}^{\rho_1-1}\binom{n_i}{j}\sum_{g=1}^{h+\kappa_{\rho-j}-1}\lambda_{g}^{(n_i)}\chi^{g}_{1+j}\right)\\
\vdots\\
a^{n_i}\mathcal{A}_{1}^{n_i}\left(\sum_{j=1}^{h+\kappa_{\rho+1-\rho_1}-1}\lambda_{j}^{(n_i)}\chi_{\rho_1}^{j}\right)\\
\vdots\\
a^{n_i}\mathcal{A}_{\gamma}^{n_i}\left(\sum_{j=1}^{h+\kappa_{\rho_\gamma}-1}\lambda_{j}^{(n_i)}\chi_{\rho+1-\rho_\gamma}^{j}+\sum_{j=1}^{\rho_\gamma-1}\binom{n_i}{j}\sum_{g=1}^{h+\kappa_{\rho_\gamma-j}-1}\lambda_{g}^{(n_i)}\chi^{g}_{\rho-\rho_\gamma+1+j}\right)\\
\vdots\\
a^{n_i}\mathcal{A}_{\gamma}^{n_i}\left(\sum_{j=1}^{h+\kappa_{1}-1}\lambda_{j}^{(n_i)} \chi_{\rho}^{j}\right)\\
                                     \end{cases}\begin{array}{c}
\left(L_1\right)\\
                                                 \vdots\\
\left(L_{\rho_1}\right)\\
\vdots\\
\left(L_{\rho-\rho_\gamma+1}\right)\\
\vdots\\
\left(L_{\rho}\right)\\
                                                \end{array}}$$

We are going to make a constant difference between the $h$ first vectors and the $m-h$ last vectors and one has to keep in mind that the common notations for a reduced basis only refers to a reduction on the last vectors.\\
\medskip

Let us prove the lemma by decreasing induction on $l\in\{1,\ldots,\rho\}$. 
Our induction hypothesis is that for every $1\leq l\leq \rho$, there exists $q'\in\Z_{+}$ such that for any $1\leq j\leq h+\kappa_{\rho+1-l}-1$, $\frac{a^{n_i}\lambda_{j}^{(n_i)}}{n_{i}^{q'}}\underset{i\to+\infty}{\longrightarrow}0$.

We begin by proving the induction hypothesis with $l=\rho$.
Observing that $\mathcal{A}_\gamma$ is an isometry, then $(L_\rho)$ gives:
\begin{equation}\label{l_r}
 a^{n_i}\left(\sum_{j=1}^{h}\lambda_{j}^{(n_i)} \chi_{\rho}^{j}+\sum_{j=h+\kappa_0}^{h+\kappa_{1}-1}\lambda_{j}^{(n_i)} \chi_{\rho}^{j}\right)\underset{i\to+\infty}{\longrightarrow}0.
\end{equation}
According to the assumptions, for any $1\leq j\leq h$, $\frac{b^{n_i} \lambda_{j}^{(n_i)}}{n_{i}^{q}}\underset{i\to+\infty}{\longrightarrow}0$. Recall that $0<\vert a\vert<\vert b\vert\leq1$, we deduce $a^{n_i} \lambda_{j}^{(n_i)}\underset{i\to+\infty}{\longrightarrow}0$ for any $1\leq j\leq h$.
Substitute this result into (\ref{l_r}): 
$$a^{n_i}\sum_{j=h+\kappa_0}^{h+\kappa_{1}-1}\lambda_{j}^{(n_i)} \chi_{\rho}^{j}\underset{i\to+\infty}{\longrightarrow}0.$$
Moreover, $\{\chi_{\rho}^{j}\}_{h+\kappa_0}^{h+\kappa_1-1}$ is linearly independent or empty so for every $1\leq j\leq h+\kappa_1-1$, $a^{n_i}\lambda_{j}^{(n_i)}\underset{i\to+\infty}{\longrightarrow}0$.

Assume that $l\in\{1,\ldots,\rho\}$ and that the induction hypothesis is true for natural numbers strictly greater than $l$ and smaller than $\rho$.
By induction hypothesis, there exists $q'\in\Z_{+}$ so that: $$\frac{a^{n_i}\lambda_{j}^{(n_i)}}{n_{i}^{q'}}\underset{i\to+\infty}{\longrightarrow}0\text{ for any }1\leq j\leq h+\kappa_{\rho-l}-1.$$
\\
If $\kappa_{\rho-l}<\kappa_{\rho+1-l}$, then $(L_l)$ gives: 
\begin{equation}\label{eqjesaispas}
 a^{n_i}\mathcal{A}_{f}^{n_i}\left(\sum_{j=1}^{h+\kappa_{\rho-l}-1}\lambda_{j}^{(n_i)} \chi_{l}^{j}+\sum_{j=h+\kappa_{\rho-l}}^{h+\kappa_{\rho+1-l}-1}\lambda_{j}^{(n_i)} \chi_{l}^{j}+\sum_{j=1}^{d}\binom{n_i}{j}\sum_{g=1}^{h+\kappa_{\rho+1-l-j}-1}\lambda_{g}^{(n_i)}\chi^{g}_{l+j}\right)\underset{i\to+\infty}{\longrightarrow}0.
\end{equation}
with $f\in\llbracket1,\gamma\rrbracket$ and $d\in\llbracket0,\rho_f-1\rrbracket$.
Note that $\mathcal{A}_f$ is an isometry and divide the preceding equation by $n_{i}^{q'}$ then the first sum tends to 0 by induction hypothesis: 
\begin{equation*}
\frac{1}{n_{i}^{q'}}a^{n_i}\left(\sum_{j=h+\kappa_{\rho-l}}^{h+\kappa_{\rho+1-l}-1}\lambda_{j}^{(n_i)} \chi_{l}^{j}+\sum_{j=1}^{d}\binom{n_i}{j}\sum_{g=1}^{h+\kappa_{\rho+1-l-j}-1}\lambda_{g}^{(n_i)}\chi^{g}_{l+j}\right)\underset{i\to+\infty}{\longrightarrow}0.
\end{equation*}
Now divide the last equation by $n_{i}^{d}$ then combining the boundedness of $\left(\frac{\binom{n_i}{j}}{n_{i}^d}\right)_{i\in\N}$ for any $1\leq j\leq d$ and the induction hypothesis for the last sum, we obtain:
\begin{equation*}
\frac{1}{n_{i}^{q'+d}}a^{n_i}\left(\sum_{j=h+\kappa_{\rho-l}}^{h+\kappa_{\rho+1-l}-1}\lambda_{j}^{(n_i)} \chi_{l}^{j}\right)\underset{i\to+\infty}{\longrightarrow}0
\end{equation*}
Hence, as $\kappa_{\rho-l}<\kappa_{\rho+1-l}$, then $\{\chi_{l}^{j}\}_{j=h+\kappa_{\rho-l}}^{j=h+\kappa_{\rho+1-l}-1}$ is linearly independent, and therefore:
$$\frac{a^{n_i}\lambda_{j}^{(n_i)}}{n_{i}^{q'+d}}\underset{i\to+\infty}{\longrightarrow}0\text{ for any }1\leq j\leq h+\kappa_{\rho+1-l}-1.$$
\\
If $\kappa_{\rho-l}=\kappa_{\rho+1-l}$, then the proof is the same that the previous one but the first sum is missing in (\ref{eqjesaispas}).
\\Hence there exists $q'\in\Z_{+}$ such that for all $j\in\{1,\ldots,h+\kappa_{\rho}-1\}$, $\frac{a^{n_{i}}\lambda_{j}^{(n_{i})}}{n_{i}^{q'}}\underset{i\to+\infty}{\longrightarrow}0$.
This ends the induction and also the proof of the lemma because Theorem \ref{theoreductiondebase} implies $\kappa_\rho=m-h+1$, thus $h+\kappa_{\rho}-1=m$.
\end{proof}

\subsection{Sum of Jordan blocks with the same modulus}

This lemma deals with the growth of coefficients as we did before but in the case of Jordan blocks with the same modulus. It actually depends on the relative size of the two biggest blocks.
The proof is close to the one of Lemma \ref{lemconrec} but is more technical.
\begin{lem}\label{lemconrec2}
Let $\gamma,m,N\in\N$, $\gamma\geq2$. Let also $T=\oplus_{i=1}^{\gamma}\mathcal{B}_i$ be an operator on $\R^N$, where $\mathcal{B}_i$ is a Jordan block of modulus one and $\mathcal{A}_i=1$ or $R_{\theta_i}$. Moreover, assume that one of the following conditions holds:
\begin{align}
 &\rho_1\geq \ldots\geq \rho_\gamma&\label{H1}\\
 &\rho_2\geq \ldots\geq \rho_\gamma \text{ and } \rho_1=\rho_2-1&\label{H2}.
\end{align}
Let $M$ be an $m$-dimensional subspace and let $x^1,\ldots,x^m\in\R^N$ denote a reduced basis of $M$ with Theorem \ref{theoreductiondebase} for which there exist a strictly increasing sequence of natural numbers $(n_i)_{i\in\N}$ and real sequences $(\lambda_{j}^{(n_i)})_{i\in\N,1\leq j\leq m}$ such that $T^{n_i}(\sum_{j=1}^{m}\lambda_{j}^{(n_i)}x^j)\underset{i\to+\infty}{\longrightarrow}(\underbrace{1,\ldots,1}_{\rho_1\text{ times}},0,\ldots,0)$.
\\Then $\frac{\lambda_{j}^{(n_i)}}{n_{i}^{\rho_1}}\underset{i\to+\infty}{\longrightarrow}0$ for any $1\leq j\leq m$.
\end{lem}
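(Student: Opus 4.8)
The plan is to follow the pattern of the proof of Lemma~\ref{lemconrec}, with one essential twist: since every block now has modulus $1$ there is no exponential decay to exploit, so the polynomial exponents must be tracked exactly. The whole point will be that, thanks to Lemma~\ref{lemprecomb}, these exponents combine through a \emph{maximum} and not through a sum, and that the two assumptions $(\ref{H1})$ and $(\ref{H2})$ are precisely what guarantees that this maximum does not exceed $\rho_1$.

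First I would unwind the iterates. Write $\rho=\sum_i\rho_i$ and decompose $T^{n_i}\big(\sum_{j=1}^m\lambda_j^{(n_i)}x^j\big)$ into its lines $(L_1),\dots,(L_\rho)$. If $(L_l)$ sits at depth $d$ (counted from the bottom) inside a block $\mathcal{B}_f$, put $u_l=\sum_g\lambda_g^{(n_i)}\chi_l^g$ and $L'_l=\mathcal{A}_f^{-n_i}(L_l)$. Since $T$ is block diagonal and each $\mathcal{A}_f$ is an isometry, applying Lemma~\ref{lemprecomb} inside $\mathcal{B}_f$ (legitimate once $n_i\geq\rho_f$) gives, for every line of $\mathcal{B}_f$,
\[
L'_l=u_l+\sum_{s=1}^{d}\Delta_s(n_i)\,L'_{l+s},
\]
with all $L'_{l+s}$ bounded. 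Moreover the linear-independence and vanishing properties in Theorem~\ref{theoreductiondebase} say that $u_l$ only involves the $\lambda_g^{(n_i)}$ with $g<\kappa_{\rho-l+1}$, and that the ``new'' family $\{\chi_l^g:\kappa_{\rho-l}\leq g<\kappa_{\rho-l+1}\}$ is linearly independent or empty. Finally, the hypothesis on the limit gives $L'_l\to0$ for every line of every block $\mathcal{B}_f$ with $f\geq2$, whereas the lines of $\mathcal{B}_1$ only remain bounded (their norms converge).

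Then comes the core estimate: a decreasing induction on $l=\rho,\dots,1$ producing an exponent $Q$ such that $\lambda_g^{(n_i)}/n_i^{Q}\to0$ for every $g$ already handled. At the line $(L_l)$ one divides the identity $u_l=L'_l-\sum_{s=1}^{d}\Delta_s(n_i)L'_{l+s}$ by $\Delta_d(n_i)$; using $\deg\Delta_s=s$ (Lemma~\ref{lemcomb}) and the boundedness of the $L'_{l+s}$ one gets $u_l/n_i^{d}\to0$ when $f\geq2$ (because then also $L'_{l+d}\to0$), and merely ``$u_l/n_i^{d}$ bounded'', hence $u_l/n_i^{d+1}\to0$, when $f=1$. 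Splitting $u_l$ into its old part ($g<\kappa_{\rho-l}$, which vanishes after dividing by $n_i^{Q_{\mathrm{prev}}}$ thanks to the inductive bound) and its new part, the linear independence of the new $\chi_l^g$'s peels off $\lambda_g^{(n_i)}/n_i^{e}\to0$ for the new $g$'s, where $e=\max\big(Q_{\mathrm{prev}},\,d\big)$ if $f\geq2$ and $e=\max\big(Q_{\mathrm{prev}},\,d+1\big)$ if $f=1$ (empty new batches simply carry $Q$ forward). Since inside a block the depths $d$ run over $\{0,\dots,\rho_f-1\}$, iterating produces a final exponent bounded by $\max\!\big(\max_{f\geq2}(\rho_f-1),\,\rho_1\big)$, which equals $\rho_1$ under $(\ref{H1})$ (all $\rho_f\leq\rho_1$) and also under $(\ref{H2})$ (there $\max_{f\geq2}\rho_f=\rho_2=\rho_1+1$, so $\max_{f\geq2}(\rho_f-1)=\rho_1$). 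As $\kappa_\rho=m+1$, this proves $\lambda_j^{(n_i)}/n_i^{\rho_1}\to0$ for all $1\leq j\leq m$.

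I expect the main obstacle to be the index bookkeeping — keeping straight the global line numbering, the reduction steps and the $\kappa_p$'s, and asserting correctly which coefficients enter each $u_l$ — together with the degenerate situations (blocks of relative size $1$, steps where $\kappa_p=\kappa_{p+1}$, and the case $\rho_1=1$), which must be absorbed without spoiling the ``maximum'' accounting. The single genuinely delicate point is the dichotomy between $\mathcal{B}_1$ and the other blocks in the $\Delta_d$-division: this is where the extra ``$+1$'' in the exponent for $\mathcal{B}_1$ appears, and it is exactly what forces the separate treatment of $(\ref{H1})$ and $(\ref{H2})$.
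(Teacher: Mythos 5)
Your proof is correct and takes essentially the same route as the paper's: decompose the iterates into lines, use Lemmas~\ref{lemcomb} and \ref{lemprecomb} to write each line $L'_l$ as $u_l+\sum_s\Delta_s(n_i)L'_{l+s}$, and run a decreasing induction that peels off the new $\lambda_g^{(n_i)}$ at each line by dividing by the appropriate power of $n_i$. The only cosmetic difference is in the bookkeeping: the paper packages the running exponent into the quantities $\delta_l$ and $\nu_l$ (with the $-1$ shift baked in under (\ref{H2})), while you keep a running maximum and explicitly separate the block $\mathcal{B}_1$ (where $L'_{l+d}$ does not vanish, forcing the exponent $d+1$) from the blocks $\mathcal{B}_f$, $f\geq2$ (where $L'_{l+d}\to0$, giving exponent $d$); both accountings yield $\rho_1$ under (\ref{H1}) and (\ref{H2}) alike.
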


\begin{proof}
Denote by $\tau_i \rho_i$ the size of the block $\mathcal{B}_i$ with $\tau_i=1$ or $2$ and $\rho=\sum_{i=1}^{\gamma}\rho_i$. Then,
$$T^{n_i}\left(\sum_{j=1}^{m}\lambda_{j}^{(n_i)}x^j\right)=\begin{cases}
                                      \mathcal{A}_{1}^{n_i}\left(\sum_{j=1}^{m}\lambda_{j}^{(n_i)}\chi_{1}^{j}+\sum_{j=1}^{\rho_1-1}\binom{n_i}{j}\sum_{g=1}^{\kappa_{\rho-j}-1}\lambda_{g}^{(n_i)}\chi^{g}_{1+j}\right)\\
\vdots\\
\mathcal{A}_{1}^{n_i}\left(\sum_{j=1}^{\kappa_{\rho+1-\rho_1}-1}\lambda_{j}^{(n_i)}\chi_{\rho_1}^{j}\right)\\
\vdots\\
\mathcal{A}_{\gamma}^{n_i}\left(\sum_{j=1}^{\kappa_{\rho_\gamma}-1}\lambda_{j}^{(n_i)}\chi_{\rho+1-\rho_\gamma}^{j}+\sum_{j=1}^{\rho_\gamma-1}\binom{n_i}{j}\sum_{g=1}^{\kappa_{\rho_\gamma-j}-1}\lambda_{g}^{(n_i)}\chi^{g}_{\rho-\rho_\gamma+1+j}\right)\\
\vdots\\
\mathcal{A}_{\gamma}^{n_i}\left(\sum_{j=1}^{\kappa_{1}-1}\lambda_{j}^{(n_i)} \chi_{\rho}^{j}\right)\\
                                     \end{cases}\begin{array}{c}
(L_1)\\
                                                 \vdots\\
(L_{\rho_1})\\
\vdots\\
(L_{\rho-\rho_\gamma+1})\\
\vdots\\
(L_{\rho})\\
                                                \end{array}$$
becomes
$$T^{n_i}\left(\sum_{j=1}^{m}\lambda_{j}^{(n_i)}x^j\right)=\begin{cases}
                                      \mathcal{A}_{1}^{n_i}\left(\sum_{j=1}^{m}\lambda_{j}^{(n_i)}\chi_{1}^{j}+\sum_{j=1}^{\rho_1-1}\Delta_{j}(n_i)(L_{1+j})\right)\\
\vdots\\
\mathcal{A}_{1}^{n_i}\left(\sum_{j=1}^{\kappa_{\rho+1-\rho_1}-1}\lambda_{j}^{(n_i)}\chi_{\rho_1}^{j}\right)\\
\vdots\\
\mathcal{A}_{\gamma}^{n_i}\left(\sum_{j=1}^{\kappa_{\rho_\gamma}-1}\lambda_{j}^{(n_i)}\chi_{\rho+1-\rho_\gamma}^{j}+\sum_{j=1}^{\rho_\gamma-1}\Delta_{j}(n_i)(L_{\rho-\rho_\gamma+1+j})\right)\\
\vdots\\
\mathcal{A}_{\gamma}^{n_i}\left(\sum_{j=1}^{\kappa_{1}-1}\lambda_{j}^{(n_i)} \chi_{\rho}^{j}\right)\\
                                     \end{cases}\begin{array}{c}
(L_1)\\
                                                 \vdots\\
(L_{\rho_1})\\
\vdots\\
(L_{\rho-\rho_\gamma+1})\\
\vdots\\
(L_{\rho})\\
                                                \end{array}$$
with the help of Lemma \ref{lemcomb} and Lemma \ref{lemprecomb}.
Moreover, remark that every $l\in\{1,\ldots,\rho\}$ can be written $l=\rho-\sum_{i=\gamma-j+1}^{\gamma}\rho_i-d_{l}$ with $j\in\llbracket0,\gamma-1\rrbracket$ and $d_{l}\in\llbracket0,\rho_{\gamma-j}-1\rrbracket$ in a unique way.\\Thus for every $l\in\{1,\ldots,\rho\}$ we can define $\delta_l=\begin{cases}d_{l}+1\text{ if }j=0\\\max(d_{l}+1,\rho_{\gamma-j+1})\text{ if }j\neq0\end{cases}$.
Roughly speaking $\delta_l$ is the relative size of the biggest Jordan block of $T$ under the $l$-th line (included) of $T$. Moreover, we need to precise that a line has to be understood as a normal line in a classical Jordan block but as two lines for a real Jordan block. 
In addition, denote $\nu_l=\begin{cases}
                \delta_l&\text{ if }(\ref{H1})\text{ holds }\\
\delta_l-1&\text{ if }(\ref{H2})\text{ holds }
               \end{cases}$.
\medskip

We are going to prove that for any $1\leq j\leq \kappa_{\rho+1-l}-1$, $\frac{\lambda_{j}^{(n_i)}}{n_{i}^{\nu_l}}\underset{i\to+\infty}{\longrightarrow}0$ by decreasing induction on $l\in\{1,\ldots,\rho\}$.\\
Consider first that $l=\rho$, $\mathcal{A}_\gamma$ being an isometry, $(L_\rho)$ gives:
\begin{equation}\label{l_r2}
\sum_{j=1}^{\kappa_{1}-1}\lambda_{j}^{(n_i)} \chi_{\rho}^{j}\underset{i\to+\infty}{\longrightarrow}0.
\end{equation}
\\By independence of the family $\{\chi_{\rho}^{j}\}_{\kappa_0}^{\kappa_1-1}$, we conclude that for any $1\leq j\leq \kappa_1-1$, $\lambda_{j}^{(n_i)}\underset{i\to+\infty}{\longrightarrow}0$.\\
Now we assume that the induction hypothesis is satisfied for $l+1,\ldots,\rho$ and we prove it for $l$.
The induction hypothesis yields:$$\frac{\lambda_{j}^{(n_i)}}{n_{i}^{\nu_{l+1}}}\underset{i\to+\infty}{\longrightarrow}0\text{, for any }1\leq j\leq \kappa_{\rho-l}-1.$$
\\
If $\kappa_{\rho-l}<\kappa_{\rho+1-l}$, then remember $(L_l)$: 
\begin{equation*}
 \left\Vert \mathcal{A}_{f}^{n_i}\left(\sum_{j=1}^{\kappa_{\rho-l}-1}\lambda_{j}^{(n_i)} \chi_{l}^{j}+\sum_{j=\kappa_{\rho-l}}^{\kappa_{\rho+1-l}-1}\lambda_{j}^{(n_i)} \chi_{l}^{j}+\sum_{j=1}^{d_{l}}\Delta_{j}(n_i)L_{l+j})\right)\right\Vert\underset{i\to+\infty}{\longrightarrow}0\text{ or }1
\end{equation*}
where $f\in\llbracket1,\gamma\rrbracket$ and $d_{l}\in\llbracket0,\rho_f-1\rrbracket$.
Keep in mind that $A_f$ is an isometry and divide the preceding line by $n_{i}^{\nu_{l+1}}$, then by induction the first sum tends to zero and we get: 
\begin{equation}\label{eqidontknow}
\frac{1}{n_{i}^{\nu_{l+1}}}\left(\sum_{j=\kappa_{\rho-l}}^{\kappa_{\rho+1-l}-1}\lambda_{j}^{(n_i)} \chi_{l}^{j}+\sum_{j=1}^{d_{l}}\Delta_{j}(n_i)(L_{l+j})\right)\underset{i\to+\infty}{\longrightarrow}0.
\end{equation}
It suffices then to use Lemma \ref{lemcomb} and to compare $\nu_{l+1}$ and $d_{l}$ to obtain:

$\bullet$ if (\ref{H1}) holds:
$$\begin{cases}
\frac{1}{n_{i}^{\nu_{l+1}}}\left(\sum_{j=\kappa_{\rho-l}}^{\kappa_{\rho+1-l}-1}\lambda_{j}^{(n_i)} \chi_{l}^{j}\right)\underset{i\to+\infty}{\longrightarrow}0&\text{ if }d_{l}<\nu_{l+1}\text{ i.e. }\delta_l=\delta_{l+1}.\\
\frac{1}{n_{i}^{\nu_{l+1}}}\left(\sum_{j=\kappa_{\rho-l}}^{\kappa_{\rho+1-l}-1}\lambda_{j}^{(n_i)} \chi_{l}^{j}\right)+\frac{(-1)^{d_{l}+1}}{d_{l}!}(L_{l+d_{l}})\underset{i\to+\infty}{\longrightarrow}0&\text{ if }d_{l}=\nu_{l+1}\text{ i.e. }\delta_l=\delta_{l+1}+1.\\
\end{cases}$$
Because of $\kappa_{\rho-l}<\kappa_{\rho+1-l}$, the family $\{\chi_{l}^{j}\}_{\kappa_{\rho-l}}^{\kappa_{\rho+1-l}-1}$ is linearly independent and we deduce that for any $1\leq j\leq \kappa_{\rho+1-l}-1$, $$\frac{\lambda_{j}^{(n_i)}}{n_{i}^{\delta_l}}\underset{i\to+\infty}{\longrightarrow}0.$$

$\bullet$ if (\ref{H2}) holds:
$$\begin{cases}
\frac{1}{n_{i}^{\nu_{l+1}}}\left(\sum_{j=\kappa_{\rho-l}}^{\kappa_{\rho+1-l}-1}\lambda_{j}^{(n_i)} \chi_{l}^{j}\right)\underset{i\to+\infty}{\longrightarrow}0&\text{ if }d_{l}<\nu_{l+1}\text{ hence }\delta_l=\delta_{l+1}.\\
\frac{1}{n_{i}^{\nu_{l+1}}}\left(\sum_{j=\kappa_{\rho-l}}^{\kappa_{\rho+1-l}-1}\lambda_{j}^{(n_i)} \chi_{l}^{j}\right)+\frac{(-1)^{d_{l}+1}}{d_{l}!}(L_{l+d_{l}})\underset{i\to+\infty}{\longrightarrow}0&\text{ if }d_{l}=\nu_{l+1}\text{ hence }\delta_l=\delta_{l+1}.\\
\frac{1}{n_{i}^{\nu_{l+1}+1}}\left(\sum_{j=\kappa_{\rho-l}}^{\kappa_{\rho+1-l}-1}\lambda_{j}^{(n_i)} \chi_{l}^{j}\right)+\frac{(-1)^{d_{l}+1}}{d_{l}!}(L_{l+d_{l}})\underset{i\to+\infty}{\longrightarrow}0&\text{ if }d_{l}=\nu_{l+1}+1\text{, i.e. }\delta_l=\delta_{l+1}+1.\\
\end{cases}$$
However, in the second and third cases just above, we are not working on the block $\mathcal{B}_1$ yet because the condition $\rho_1=\rho_2-1$ is not compatible with $d_{l}=\delta_{l+1}-1$ or $d_{l}=\delta_{l+1}$ given by the second and third cases. Thus, $(L_{l+d_{l}})\underset{i\to+\infty}{\longrightarrow}0$.
Moreover, $\kappa_{\rho-l}<\kappa_{\rho+1-l}$ yields to the independence of the family $\{\chi_{l}^{j}\}_{\kappa_{\rho-l}}^{\kappa_{\rho+1-l}-1}$. Hence, we deduce that for any $1\leq j\leq \kappa_{\rho+1-l}-1$, $$\frac{\lambda_{j}^{(n_i)}}{n_{i}^{\delta_{l}-1}}\underset{i\to+\infty}{\longrightarrow}0.$$
\\
If $\kappa_{\rho-l}=\kappa_{\rho+1-l}$, then the proof is the same apart from that in (\ref{eqidontknow}) the first term is missing.
\\Thus for every $1\leq j\leq \kappa_\rho-1$, $\frac{\lambda_{j}^{(n_{i})}}{n_{i}^{\nu_{1}}}.$
This ends the induction process.
\medskip

Then, by Theorem \ref{theoreductiondebase} $\kappa_{\rho}-1=m$ and noticing that $\nu_1=\rho_1$ in case (\ref{H1}) and $\nu_1=\rho_2-1$ in case (\ref{H2}), we end the proof of the lemma.
\end{proof}

From this lemma, we deduce a general result for operators which are given as a direct sum of Jordan blocks with the same moduli. 

\begin{theo}\label{theodsbloc}
Let $T$ be an operator on $\R^N$ which is a direct sum of Jordan blocs of modulus 1. Then $T$ is not $(\rho-1)$-supercyclic.
\end{theo}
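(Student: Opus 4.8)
The plan is to argue by contradiction, letting Lemma \ref{lemconrec2} supply the analytic control on the growth of the coefficients of an approximating combination. Since permuting the Jordan blocks of $T$ is a similarity, it affects neither $(\rho-1)$-supercyclicity nor the relative size $\rho$, so I may assume once and for all that $\rho_1\ge\rho_2\ge\cdots\ge\rho_\gamma$, which is hypothesis (\ref{H1}) of Lemma \ref{lemconrec2}. I would then run the proof as an induction on the number $\gamma$ of blocks; the base case $\gamma=1$ is Proposition \ref{propblocjordr} when $\mathcal{B}_1$ is real (that proposition even gives the stronger ``not $\rho$-supercyclic''), and a straightforward scalar variant of it when $\mathcal{B}_1$ is classical.

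For the inductive step, suppose $\gamma\ge2$, that $T$ is $(\rho-1)$-supercyclic, and let $M=\Span\{x^1,\dots,x^{\rho-1}\}$ be a $(\rho-1)$-supercyclic subspace; reduce a basis of $M$ with respect to $T$ through Theorem \ref{theoreductiondebase}, so that $\kappa_0=1\le\kappa_1\le\cdots\le\kappa_\rho=m+1=\rho$. The elementary but decisive remark is that the $\rho$ increments $\kappa_{p+1}-\kappa_p$ lie in $\{0,1,2\}$ and add up to $\rho-1$, so at least one of them vanishes: there is a \emph{plateau} index $p_0$ with $\kappa_{p_0}=\kappa_{p_0+1}$, at which no fresh basis vector of $M$ contributes a nonzero $\chi$-component. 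By density of the orbit, choose a strictly increasing $(n_i)$ and reals $(\lambda_j^{(n_i)})_{1\le j\le m}$ with $T^{n_i}\bigl(\sum_{j}\lambda_j^{(n_i)}x^j\bigr)\to(\underbrace{1,\dots,1}_{\rho_1},0,\dots,0)$; Lemma \ref{lemconrec2} then gives $\lambda_j^{(n_i)}/n_i^{\rho_1}\to0$ for all $j$, and, along its proof, the sharper line-by-line estimates $\lambda_j^{(n_i)}=o(n_i^{\delta_{l+1}})$ for $j<\kappa_{\rho-l}$.

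Now two cases are distinguished according to whether the plateau sits inside the first (largest) block $\mathcal{B}_1$ or strictly below it. If it lies inside $\mathcal{B}_1$, I would reproduce the cancellation of Proposition \ref{propblocjordr} on the plateau line $(L_{\rho-p_0})$: the fresh $\chi_{\rho-p_0}^j$-terms being absent there, this line equals (up to the isometry $\mathcal{A}_1^{n_i}$) a sum of old $\lambda$-terms together with $\sum_{k=1}^{d}\Delta_k(n_i)(L_{\rho-p_0+k})$, $d$ being the depth of the line inside $\mathcal{B}_1$; dividing by the dominant power $n_i^{d}$, the $\lambda$-terms vanish by the estimates above, the lower-degree $\Delta$-terms vanish by Lemma \ref{lemcomb}, and one is left with $\tfrac{(-1)^{d+1}}{d!}$ times the bottom line of $\mathcal{B}_1$, forcing that line to $0$ --- in contradiction with its unit-norm target value. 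If instead the plateau sits in a later block, then no reduction step within $\mathcal{B}_1$ is a plateau, so $M$ splits off a $\rho_1$-dimensional piece feeding $\mathcal{B}_1$; after checking that the $\mathcal{B}_1$-coordinates do not interfere, the complementary piece is a $((\rho-\rho_1)-1)$-supercyclic subspace for $\bigoplus_{i\ge2}\mathcal{B}_i$, a direct sum of $\gamma-1$ blocks of modulus $1$ still ordered by non-increasing relative size --- contrary to the induction hypothesis.

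The step I expect to be the real obstacle is the matching between the location of the plateau, the block structure, and the exponents of $n_i$ in the cancellation: the crude bound $\lambda_j^{(n_i)}=o(n_i^{\rho_1})$ is never enough on its own, and even the line-by-line estimate kills the old terms only when the plateau is deep enough inside its block; this is exactly why Lemma \ref{lemconrec2} is stated in its refined form, and the companion hypothesis (\ref{H2}) is what one needs in the borderline subcases (and, I expect, after reordering the surviving blocks in the inductive step). Making the ``non-interference'' splitting of $M$ precise and organising this case analysis cleanly is where the work concentrates; the remainder is routine bookkeeping with the polynomials $\Delta_n$ of Lemmas \ref{lemcomb} and \ref{lemprecomb}.
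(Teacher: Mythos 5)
Your plan runs into two genuine obstacles, and you correctly sense both of them without managing to close either. First, applying Lemma~\ref{lemconrec2} to $T$ itself gives only $\lambda_j^{(n_i)}=o(n_i^{\rho_1})$, whereas your cancellation on the plateau line needs $\lambda_j^{(n_i)}=o(n_i^{d})$ with $d\le\rho_1-1$; the finer line-by-line estimate from the lemma's proof saves you only for indices $j<\kappa_{\rho-1}$ and only when $\rho_1>\rho_2$ (for, when $\rho_1=\rho_2$, the exponent $\nu_2=\max(\rho_1-1,\rho_2)$ collapses back to $\rho_1$). You have not pinned down where the plateau actually sits, so the exponents do not match. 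Second, the branch where the plateau sits in a later block asks you to split $M$ into a $\rho_1$-dimensional piece feeding $\mathcal{B}_1$ and a $(\rho-\rho_1-1)$-dimensional supercyclic subspace for $\bigoplus_{i\ge2}\mathcal{B}_i$. That ``non-interference'' statement is exactly the same-modulus analogue of Theorem~\ref{theovpdiff}, for which there is no replacement of Lemma~\ref{lemconrec} available: the paper even poses this as an open question right after Corollary~\ref{corprincipal}. So your induction on $\gamma$ rests, in one of its two cases, on a result the paper deliberately avoids needing.

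The paper escapes both problems with a different decomposition and a different induction variable. It inducts on the degree $D=\sum_i(\rho_i-1)$ rather than on the number of blocks, and it does not peel off an entire block: it peels off only the \emph{top row} of the largest block $\mathcal{B}_1$, leaving $S=\mathcal{C}_1\oplus\mathcal{B}_2\oplus\cdots\oplus\mathcal{B}_\gamma$, which has relative size $\rho-1$ and degree $D-1$. Two things then fall out simultaneously. The induction hypothesis on $S$ (that $S$ is not $(\rho-2)$-supercyclic) forces $\kappa_{\rho-1}=\rho$, i.e.\ the plateau occurs at the top line of $T$, so the depth is exactly $d=\rho_1-1$; this is precisely the localisation your argument lacks. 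And Lemma~\ref{lemconrec2} is applied to $S$ rather than to $T$: since $\rho_1^{(S)}=\rho_1-1$, hypothesis~(\ref{H1}) holds for $S$ when $\rho_1>\rho_2$ and hypothesis~(\ref{H2}) holds when $\rho_1=\rho_2$, and in both situations $\nu_1^{(S)}=\rho_1-1$, giving the needed bound $\lambda_j^{(n_i)}=o(n_i^{\rho_1-1})$ for \emph{all} $j$. Dividing $(L_1)$ by $n_i^{\rho_1-1}$ then isolates the $\Delta_{\rho_1-1}$ term by Lemma~\ref{lemcomb} and forces $(L_{\rho_1})\to0$, contradicting the unit target. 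So the missing ideas are: induct on $D$, reduce to $S$ (not to $\bigoplus_{i\ge2}\mathcal{B}_i$), and apply Lemma~\ref{lemconrec2} to $S$; with those replacements the ``borderline subcase'' and the ``non-interference splitting'' you flag simply never arise.
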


\begin{proof}
Let us define the natural number $D:=\sum_{i=1}^{\gamma}(\rho_i-1)$ as the degree of $T$.
\medskip

The proof is done by induction on the degree $D$ of $T$.
If $D=0$, then $T$ is a primary matrix of order $\rho$ and Proposition \ref{propmatprimk} claims that $T$ is not $(\rho-1)$-supercyclic.\\
Assume that the induction hypothesis is true from 0 to $D-1$, let us prove it for $D$.\\
Suppose in order to obtain a contradiction that $T$ is $(\rho-1)$-supercyclic.
Without loss of generality, we can assume that $T=\oplus_{i=1}^{\gamma}\mathcal{B}_i$, where $\mathcal{B}_i$ is a Jordan block of modulus one and $\rho_{1}\geq \ldots\geq \rho_{\gamma}$.
$T$ is clearly not $(\rho-1)$-supercyclic when $T$ contains only one block due to either Proposition \ref{propblocjordr} if the block is real or \cite{Bou} if it is a classical Jordan block.
We can also assume that $\rho_{1}>1$ thanks to Proposition \ref{propmatprimk}.

Thus one can write:
$$T=\left(\begin{array}{cccccccc}
\multicolumn{1}{c|}{\mathcal{A}_{1}}   &\multicolumn{1}{|c}{\mathcal{A}_{1}}&0&\cdots&\multicolumn{1}{c|}{0}  &0&\cdots&0 \\\cline{1-5}
0 &\multicolumn{4}{|c|}{\mathcal{C}_{1}}&0&\cdots&0 \\ \cline{1-5}
0&\cdots &\cdots&0&0&\mathcal{B}_2&\ddots&0\\
\vdots&\ddots&\ddots&0&0&0&\ddots&0 \\
0 & \cdots&\cdots&0      &0&0&0&\mathcal{B}_{\gamma} \\
\end{array}\right)$$
with $\mathcal{B}_1=\left(\begin{array}{ccccc}
\multicolumn{1}{c|}{\mathcal{A}_{1}}   &\multicolumn{1}{c}{\mathcal{A}_{1}}&0&\cdots&\multicolumn{1}{c}{0} \\\cline{1-5}
0 &\multicolumn{4}{|c}{\mathcal{C}_{1}}\end{array}\right)$.
Denote also by $S$ the diagonal block matrix being the direct sum of $\mathcal{C}_1,\mathcal{B}_2,\ldots,\mathcal{B}_\gamma$.
As $T$ is $(\rho-1)$-supercyclic, denote $M=\Span\{x^1,\ldots,x^{\rho-1}\}$ a $(\rho-1)$-supercyclic subspace for $T$ where the basis is reduced with Theorem \ref{theoreductiondebase}.
According to the induction hypothesis, $S$ is not $(\rho-2)$-supercyclic, and thus there exists $p<\rho$ such that $\kappa_p=\rho$.
Indeed, suppose by way of contradiction that $\kappa_{\rho-1}<\rho$, this means that $\chi_{j}^{\rho-1}=\left(\begin{array}{c}0\\0\\\end{array}\right)$ for every $2\leq j\leq \rho$. Thus, $d:=\dim(\Span\{(\chi_{j}^{1})_{2\leq j\leq \rho},\ldots,(\chi_{j}^{\rho-1})_{2\leq j\leq \rho}\})\leq \rho-2$. Hence, as $M$ is $(\rho-1)$-supercyclic for $T$, then $S$ is $d$-supercyclic. But this would contradict the fact that $S$ is not $(\rho-2)$-supercyclic.\\
Since $T$ is $(\rho-1)$-supercyclic, there exist a strictly increasing sequence of natural numbers $(n_i)_{i\in\N}$ and real sequences $(\lambda_{j}^{(n_i)})_{i\in\N,1\leq j\leq \rho-1}$ such that:
$T^{n_i}\left(\sum_{j=1}^{\rho-1}\lambda^{(n_i)}_{j}x^j\right) \underset{i\to+\infty}{\longrightarrow}(\underbrace{1,\ldots,1}_{\rho_1\text{ times}},0,\ldots,0)$.
Then, there are two options: either $\rho_1-1\geq \rho_2$ or $\rho_1=\rho_2$.
Moreover, in both cases, Lemma \ref{lemconrec2} applied to $T=S$ leads to $\frac{\lambda^{(n_i)}_{j}}{n_{i}^{\rho_1-1}}\underset{i\to+\infty}{\longrightarrow}0$ for any $1\leq j\leq \rho-1$ because there exists $p<\rho$ such that $\kappa_p=\rho$.
On the other hand, applying Lemma \ref{lemprecomb} to the first line of $T^{n_i}\left(\sum_{j=1}^{\rho-1}\lambda^{(n_i)}_{j}x^j\right)$ and taking the limit provides:
$$\left\Vert \mathcal{A}_{1}^{n_i}\left(\sum_{j=1}^{\rho-1}\lambda_{j}^{(n_i)}\chi_{1}^{j}+\sum_{j=1}^{\rho_1-1}\Delta_{j}(n_i)(L_{1+j})\right)\right\Vert\underset{i\to+\infty}{\longrightarrow}1.$$
Now, divide this by $n_{i}^{\rho_1-1}$ and recall that $A_1$ is an isometry:
$$\sum_{j=1}^{\rho-1}\frac{\lambda_{j}^{(n_i)}}{n_{i}^{\rho_1-1}}\chi_{1}^{j}+\sum_{j=1}^{\rho_1-1}\frac{\Delta_{j}(n_i)}{n_{i}^{\rho_1-1}}(L_{1+j})\underset{i\to+\infty}{\longrightarrow}0.$$
Using both $\frac{\lambda^{(n_i)}_{j}}{n_{i}^{\rho_1-1}}\underset{i\to+\infty}{\longrightarrow}0$ and Lemma \ref{lemcomb}, one may observe that all the terms in the previous equation are tending to 0 apart from the term in $j=\rho_1-1$ in the last sum:
$$\frac{(-1)^{\rho_1}}{(\rho_1-1)!}\underset{i\to+\infty}{\longrightarrow}0.$$
This last result is obviously contradictory, thus $T$ is not $(\rho-1)$-supercyclic and this ends the induction process.
\end{proof}

\subsection{General matrix}

The next Theorem reduces the study of $m$-supercyclic operators on $\R^N$ to that of operators which are direct sums of Jordan blocks of modulus one.
\begin{theo}\label{theovpdiff}
Let $T$ be such that $T=\oplus_{i=1}^{\gamma}a_i \mathcal{C}_i$ where $\vert a_1\vert<\cdots<\vert a_\gamma\vert\leq1$, and $\mathcal{C}_i$ is a direct sum of Jordan blocks of modulus one for any $1\leq i\leq \gamma$.
Assume that for any $1\leq i\leq \gamma$, $\mathcal{C}_i$ is $m_i$-supercyclic and that $m_i$ is optimal.
Then, $T$ is not $\left((\sum_{i=1}^{\gamma}m_i)-1\right)$-supercyclic.
\end{theo}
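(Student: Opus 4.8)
The plan is to argue by contradiction in the spirit of the previous lemmas and theorems, combining the "scale separation" provided by Lemma \ref{lemconrec} (different moduli) with the growth estimates provided by Lemma \ref{lemconrec2} and Theorem \ref{theodsbloc} (same modulus). Suppose $T=\oplus_{i=1}^{\gamma}a_i\mathcal{C}_i$ is $\left((\sum_{i=1}^{\gamma}m_i)-1\right)$-supercyclic, and set $m=\sum_{i=1}^{\gamma}m_i$. Let $M=\Span\{x^1,\ldots,x^{m-1}\}$ be an $(m-1)$-supercyclic subspace, and reduce the basis with respect to $T$ using Theorem \ref{theoreductiondebase}; I will decompose each vector along the block structure $\chi^{j}$, but grouped by blocks $\mathcal{C}_1,\ldots,\mathcal{C}_\gamma$, writing $\chi^{j}=(\xi^{j}_1,\ldots,\xi^{j}_\gamma)$ where $\xi^{j}_i$ is the part on which $a_i\mathcal{C}_i$ acts. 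The key point to establish first is a lower bound on how many basis vectors are "active" on each block. Precisely, if $M_i$ denotes the projection of $M$ onto the $i$-th block coordinates, then by an argument as in the proof of Theorem \ref{theodsbloc} (the paragraph showing $\kappa_p=\rho$), $(\rho-1)$-supercyclicity of $T$ forces $\dim(M_i)\geq m_i$ for every $i$, since otherwise $\mathcal{C}_i$ would be $(m_i-1)$-supercyclic, contradicting optimality of $m_i$. Combined with $\sum_{i=1}^{\gamma}\dim(M_i)\geq \dim(M)=m-1 = \sum m_i - 1$, and the fact that these projections are related through the single basis, this will pin down that exactly one block, say block $i_0$, has $\dim(M_{i_0})=m_{i_0}-1$ and all others have $\dim(M_i)=m_i$; equivalently the reduction $\kappa$-sequence "spends" exactly $m_i$ vectors on each block, one short on one of them.

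Next I would run the scale-separation argument from the bottom block ($i=\gamma$, largest modulus) downward. Writing the target vector of the $(m-1)$-supercyclicity relation as something concentrated on the lowest lines and $0$ elsewhere, I apply Lemma \ref{lemconrec2} (or Theorem \ref{theodsbloc}'s internal estimate) to the block $a_\gamma \mathcal{C}_\gamma$: because $\mathcal{C}_\gamma$ is an isometry up to scale and the target is supported inside it, the coefficients $\lambda_j^{(n_i)}$ restricted to vectors active on block $\gamma$ satisfy $\frac{a_\gamma^{n_i}\lambda_j^{(n_i)}}{n_i^{q_\gamma}}\to 0$ for a suitable $q_\gamma$. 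Since $|a_{\gamma-1}|<|a_\gamma|$, the same conclusion with $b=a_\gamma$ feeds into Lemma \ref{lemconrec} applied to $a_{\gamma-1}\mathcal{C}_{\gamma-1}$, which upgrades the estimate to $\frac{a_{\gamma-1}^{n_i}\lambda_j^{(n_i)}}{n_i^{q_{\gamma-1}}}\to 0$ for vectors active there, and inductively down through $\mathcal{C}_{\gamma-2},\ldots,\mathcal{C}_1$. The output of this induction is: for every $1\leq j\leq m-1$, $\frac{a_1^{n_i}\lambda_j^{(n_i)}}{n_i^{q_1}}\to 0$ for some fixed exponent $q_1$.

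Finally I would derive the contradiction exactly as in Theorem \ref{theodsbloc}: the top line(s) of $T^{n_i}\left(\sum_j \lambda_j^{(n_i)} x^j\right)$ on block $\mathcal{C}_1$ converge to the nonzero part of the target vector; expanding via Lemma \ref{lemprecomb} in terms of the $\Delta_j(n_i)$ and dividing by the appropriate power $a_1^{n_i} n_i^{q_1}$ (or rather the dominant growth term, using that $\mathcal{A}_1$ is an isometry and Lemma \ref{lemcomb} gives the degree of $\Delta_j$), the just-proved decay of the $\lambda$'s kills every term in the expansion, leaving a nonzero constant converging to $0$ — a contradiction. Hence $T$ is not $(m-1)$-supercyclic.

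The main obstacle is the first step: making precise the bookkeeping that $(m-1)$-supercyclicity of $T$ forces $\dim(M_i)\ge m_i$ for all $i$ with equality failing in exactly one block, and ensuring that the reduced basis of Theorem \ref{theoreductiondebase} is compatible block-by-block so that Lemmas \ref{lemconrec} and \ref{lemconrec2} can legitimately be invoked with the restricted families of vectors. One must be careful that the optimal constants $m_i$ for the $\mathcal{C}_i$ interact additively, and that "one short on one block" is exactly the right amount of slack — any other distribution would either contradict an $m_i$-optimality or fail to exhaust $\dim M$. Once that combinatorial skeleton is in place, the analytic part is a routine, if lengthy, iteration of the two lemmas already proved.
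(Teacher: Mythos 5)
Your proposal shares a starting idea with the paper (projections onto individual blocks and optimality of $m_i$), but the two main steps do not close as written, and the paper's actual proof resolves both by a different, dimension-counting argument.

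First, the combinatorial claim is internally inconsistent. You correctly note that the projection $M_i$ of $M$ onto the $i$-th block is supercyclic for $a_i\mathcal{C}_i$, so optimality of $m_i$ forces $\dim(M_i)\geq m_i$ for every $i$. But you then conclude that ``exactly one block has $\dim(M_{i_0})=m_{i_0}-1$'', which contradicts the inequality you just proved. In fact nothing yet contradicts $\dim M=m-1$, because the projections $M_i$ are not disjoint: $\sum_i\dim(M_i)$ can greatly exceed $\dim M$. The correct way to extract disjointness is what the paper does: write $T_p=a_{\gamma+1-p}\mathcal{C}_{\gamma+1-p}\oplus T_{p-1}$, decompose $x^i=y^i\oplus z^i$, and use the \emph{reduced} basis from Theorem~\ref{theoreductiondebase} to see that $z^j=0$ for $j>h:=\dim\Span\{z^1,\ldots,z^k\}$, hence $y^{h+1},\ldots,y^k$ are linearly independent. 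The induction hypothesis gives $h\geq\sum_{i=\gamma+2-p}^{\gamma}m_i$, the analytic step gives $\dim\Span\{y^{h+1},\ldots,y^k\}\geq m_{\gamma+1-p}$, and then $k\geq h+m_{\gamma+1-p}\geq\sum_{i\geq\gamma+1-p}m_i=k+1$. That is the contradiction; no final analytic blow-up is needed.

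Second, your terminal analytic contradiction at the top block of $T$ cannot close with the estimates available. Lemma~\ref{lemconrec} delivers only $\frac{a_1^{n_i}\lambda_j^{(n_i)}}{n_i^{q_1}}\to0$ for \emph{some} $q_1$, i.e.\ a polynomial bound $a_1^{n_i}\lambda_j^{(n_i)}=o(n_i^{q_1})$ with $q_1$ uncontrolled. To reproduce the contradiction of Theorem~\ref{theodsbloc} you must divide the top line by exactly $n_i^{\rho_1-1}$ (the degree of $\Delta_{\rho_1-1}$); Lemma~\ref{lemconrec2} gives that precise rate only in the same-modulus setting, and it does not survive the passage through Lemma~\ref{lemconrec}. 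If $q_1>\rho_1-1$, the term $\sum_j\lambda_j^{(n_i)}\chi_1^j$ after dividing is not forced to vanish and the binomial terms are not isolated, so no nonzero limit is produced. (Note also that Lemma~\ref{lemconrec2} as stated requires blocks of modulus one and a specific target $(1,\ldots,1,0,\ldots,0)$; applying it to $a_\gamma\mathcal{C}_\gamma$ as your first step already needs reinterpretation.) The paper sidesteps this entirely: the strict modulus gap is used not to sharpen the $\lambda$-decay at the top, but, via equation~(\ref{equpol}), to show that the contributions of $y^1,\ldots,y^h$ vanish on the top block, so that $\Span\{y^{h+1},\ldots,y^k\}$ is a genuine supercyclic subspace for $a_{\gamma+1-p}\mathcal{C}_{\gamma+1-p}$ --- which feeds the dimension count rather than an asymptotic identity. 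Without that reorientation, the final step of your plan is a genuine gap.
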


\begin{proof}
Let $T_p:=\oplus_{i=\gamma+1-p}^{\gamma}a_i \mathcal{C}_i$ and $t(p)$ denotes this matrix's size.
We may prove by induction that for any $1\leq p\leq \gamma$, $T_p$ is not $\left((\sum_{i=\gamma+1-p}^{\gamma}m_i)-1\right)$-supercyclic.
Actually, we prove a little bit more:
\begin{align*}
&\text{For any $1\leq p\leq \gamma$, $T_p$ is not $((\sum_{i=\gamma+1-p}^{\gamma}m_i)-1)$-supercyclic.}\label{hyprec1}\\\nonumber
&\text{ Moreover, for any $b$-supercyclic subspace with reduced basis $M=\Span\{x^1,\ldots,x^b\}$ }\\\nonumber
&\text{and with $\sum_{i=\gamma+1-p}^{\gamma}m_i\leq b\leq t(p)$, if $T_{p}^{n_i}\left(\sum_{j=1}^{b}\lambda_{j}^{(n_i)}x^j\right)\underset{i\to+\infty}{\longrightarrow}0$, then there exists $q\in\Z_{+}$ so that:}\\\nonumber
&\text{$\frac{a_{\gamma+1-p}^{n_i}\lambda_{j}^{(n_i)}}{n_{i}^{q}}\underset{i\to+\infty}{\longrightarrow}0$ for any $1\leq j\leq b$.}\\\nonumber
\end{align*}
Assume that $p=1$, then
$T_p=a_\gamma \mathcal{C}_\gamma$ and by definition, $T_p$ is $m_\gamma$-supercyclic and $m_\gamma$ is the minimum supercyclic constant.
Let $M=\Span\{x^1,\ldots,x^b\}$ be a $b$-supercyclic subspace with reduced basis and $m_\gamma\leq b\leq t(1)$. Let also, $T_{p}^{n_i}\left(\sum_{j=1}^{b}\lambda_{j}^{(n_i)}x^j\right)\underset{i\to+\infty}{\longrightarrow}0$.
Applying Lemma \ref{lemconrec} with $h=0$, $\mathcal{C}=\mathcal{C}_\gamma$, $a=a_\gamma$, $m=b$, $N=t(p)$, we see that there exists $q\in\Z_{+}$ such that $\frac{a_{\gamma}^{n_i}\lambda_{j}^{(n_i)}}{n_{i}^{q}}\underset{i\to+\infty}{\longrightarrow}0$ for any $1\leq j\leq b$.
\\Assume the induction hypothesis true for integers lower than $p$, and let us prove it for $p$.
Write $T_p=a_{\gamma+1-p}\mathcal{C}_{\gamma+1-p}\oplus T_{p-1}$ and $\mathcal{C}_{\gamma+1-p}=\oplus_{i=1}^{t}\mathcal{B}_i$ where $\mathcal{B}_i$ is a Jordan block of modulus one and of size $\tau_i \rho_i$ with $\tau_i=1$ or $2$ and define $\rho:=\sum_{i=1}^{t} \rho_i$.
\medskip

In order to obtain a contradiction, assume that $k=(\sum_{i=\gamma+1-p}^{\gamma}m_i)-1$ and let $M=\Span\{x^1,\ldots,x^k\}$ be a $k$-supercyclic subspace with reduced basis given by Theorem \ref{theoreductiondebase}.
Then for any $1\leq i\leq k$, decompose $x^i=y^i\oplus z^i$ relatively to the direct sum decomposition of $T_p$ stated above. A straightforward use of the induction hypothesis provides $h:=\dim(\Span\{z^1,\ldots,z^k\})\geq \sum_{i=\gamma+2-p}^{\gamma}m_i$.
\\Furthermore, we can show that $\dim(\Span\{y^{h+1},\ldots,y^{k}\})\geq m_{\gamma+1-p}$, indeed it suffices to prove that $\Span\{y^{h+1},\ldots,y^{k}\}$ is supercyclic for $a_{\gamma+1-p} \mathcal{C}_{\gamma+1-p}$.
Thus take any $u$ belonging to the domain of $\mathcal{C}_{\gamma+1-p}$, then there exists $(n_i)_{i\in\N}$ and $(\lambda_{j}^{(n_i)})_{i\in\N,1\leq j\leq k}$ so that $T_{p}^{n_i}(\sum_{j=1}^{k}\lambda_{j}^{(n_i)}x^j)\underset{i\to+\infty}{\longrightarrow}u\oplus 0$ by $k$-supercyclicity of $T_p$.
Moreover the induction hypothesis implies that for any $1\leq j\leq h$, there exists $q\in\Z_{+}$ such that $\frac{a_{\gamma+1-(p-1)}^{n_i}\lambda_{j}^{(n_i)}}{n_{i}^{q}}\underset{i\to+\infty}{\longrightarrow}0$ for any $1\leq j\leq h$ and since $a_{\gamma+1-p}<a_{\gamma+2-p}$, we obtain:
\begin{equation}\label{equpol}
a_{\gamma+1-p}^{n_i}\lambda_{j}^{(n_i)}P(n_i)\underset{i\to+\infty}{\longrightarrow}0\text{ for any polynomial }P. 
\end{equation}
Now we come back to $T_{p}^{n_i}(\sum_{j=1}^{k}\lambda_{j}^{(n_i)}x^j)\underset{i\to+\infty}{\longrightarrow}u\oplus 0$, projecting onto the first components, and separating the sum, we get:
\begin{equation}\label{equ}
(a_{\gamma+1-p}\mathcal{C}_{\gamma+1-p})^{n_i}(\sum_{j=1}^{h}\lambda_{j}^{(n_i)}y_j)+(a_{\gamma+1-p}\mathcal{C}_{\gamma+1-p})^{n_i}(\sum_{j=h+1}^{k}\lambda_{j}^{(n_i)}y_j)\underset{i\to+\infty}{\longrightarrow}u.
\end{equation}
Then, focus on the first sum of the preceding line:
$$\scriptstyle(a_{\gamma+1-p}\mathcal{C}_{\gamma+1-p})^{n_i}(\sum_{j=1}^{h}\lambda_{j}^{(n_i)}y_j)=\begin{cases}
                                      \scriptstyle a_{\gamma+1-p}^{n_i}\mathcal{A}_{1}^{n_i}(\sum_{j=1}^{h}\lambda_{j}^{(n_i)}\chi_{1}^{j}+\sum_{j=1}^{\rho_1-1}\binom{n_i}{j}\sum_{g=1}^{h}\lambda_{g}^{(n_i)}\chi^{g}_{1+j})\\
\vdots\\
\scriptstyle a_{\gamma+1-p}^{n_i}\mathcal{A}_{1}^{n_i}\left(\sum_{j=1}^{h}\lambda_{j}^{(n_i)} \chi_{\rho_1}^{j}\right)\\
\vdots\\
\scriptstyle a_{\gamma+1-p}^{n_i}\mathcal{A}_{t}^{n_i}(\sum_{j=1}^{h}\lambda_{j}^{(n_i)}\chi_{r+1-\rho_t}^{j}+\sum_{j=1}^{\rho_t-1}\binom{n_i}{j}\sum_{g=1}^{h}\lambda_{g}^{(n_i)}\chi^{g}_{r+1-\rho_t+j})\\
\vdots\\
\scriptstyle a_{\gamma+1-p}^{n_i}\mathcal{A}_{t}^{n_i}\left(\sum_{j=1}^{h}\lambda_{j}^{(n_i)} \chi_{r}^{j}\right)\\
                                     \end{cases}\begin{array}{c}
\scriptstyle(L_1)\\
                                                 \vdots\\
\scriptstyle(L_{\rho_1})\\
\vdots\\
\scriptstyle(L_{r+1-\rho_t})\\
\vdots\\
\scriptstyle(L_r)\\
                                                \end{array}$$
% It is an easy observation to notice that $(L_1),\ldots,(L_r)$ are equal to $a_{\gamma+1-p}^{n_i}$ times some linear combinations of polynomials in $n_i$ multiplied by some $\lambda_{j}^{(n_i)}$, $1\leq j\leq h$. 
Since $\mathcal{A}_j$ is an isometry, (\ref{equpol}) shows that for any $1\leq j\leq r$, $(L_j)\underset{i\to+\infty}{\longrightarrow}0$.
Hence, the first sum into $(\ref{equ})$ converges to 0 leading to:
$$(a_{\gamma+1-p}\mathcal{C}_{\gamma+1-p})^{n_i}(\sum_{j=h+1}^{k}\lambda_{j}^{(n_i)}y_j)\underset{i\to+\infty}{\longrightarrow}u.$$
Thus, $\Span\{y^{h+1}\ldots,y^k\}$ is supercyclic for $a_{\gamma+1-p} \mathcal{C}_{\gamma+1-p}$ and $\dim(\Span\{y^{h+1}\ldots,y^k\})\geq m_{\gamma+1-p}$.

Hence $h:=\dim(\Span\{z^1,\ldots,z^k\})\geq \sum_{i=\gamma+2-p}^{\gamma}m_i$ and $\dim(\Span\{y^{h+1},\ldots,y^{k}\})\geq m_{\gamma+1-p}$.
\medskip

Then, as the basis is reduced by Theorem \ref{theoreductiondebase}, we have $x^{j}=y^j\oplus0$ for any $h+1\leq j\leq k$, and so $k=\dim(\Span\{x^1,\ldots,x^k\})\geq \sum_{i=\gamma+2-p}^{\gamma}s_i+s_{\gamma+1-p}=k+1$. This contradiction proves that $T_p$ is not $\left((\sum_{i=\gamma+1-p}^{\gamma}m_i)-1\right)$-supercyclic.
\medskip

Let us focus now on the second part of the induction hypothesis.
For this purpose, let $M=\Span\{x^1,\ldots,x^b\}$ be a $b$-supercyclic subspace whose basis is reduced and $\sum_{i=\gamma+1-p}^{\gamma}m_i\leq b\leq t(p)$. Let also $T_{p}^{n_i}(\sum_{j=1}^{b}\lambda_{j}^{(n_i)}x^j)\underset{i\to+\infty}{\longrightarrow}0$.

For every $1\leq i\leq b$, decompose $x^{i}=y^{i}\oplus z^{i}$ relatively to the direct sum decomposition of $T_p$. Then, we just have to invoke Lemma \ref{lemconrec} ($a_{\gamma+1-p}= a$, $a_{\gamma+2-p}= b$, $b= m$, $t(p)-t(p-1)= N$, $h=$number of non-zero vectors among $z^1,\ldots,z^b$, $x^i=y^i$ , $\gamma=$number of blocks in $\mathcal{C}_{\gamma+1-p}$, $\mathcal{C}_{\gamma+1-p}= \mathcal{C}$, $a_{\gamma+1-p}\mathcal{C}_{\gamma+1-p}= T$) and the result comes: there exists $q\in\Z_{+}$ such that: $\frac{a_{\gamma+1-p}^{n_i}\lambda_{j}^{(n_i)}}{n_{i}^{q}}\underset{i\to+\infty}{\longrightarrow}0$ for any $1\leq j\leq b$.
\\This achieves the proof of the induction and of the theorem.
\end{proof}

We are now ready to state global results of supercyclicity for operators on $\R^N$. These results follow straightforwardly from Theorem \ref{theovpdiff} above and Theorem \ref{theodsbloc} and generalise Hilden and Wallen's and Herzog's results for supercyclic operators.

\begin{cor}\label{coravprinc}
 Let $N\geq2$ and $T$ be an operator on $\R^N$, then $T$ is not $(\rho-1)$ supercyclic.
\end{cor}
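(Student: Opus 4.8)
The plan is to harvest the two heavy theorems of this subsection after a handful of routine reductions. First I would note that $n$-supercyclicity is preserved by similarity (if $T=PSP^{-1}$ then $P$ maps $n$-supercyclic subspaces of $S$ onto those of $T$, since $\Oc(P(M),T)=P(\Oc(M,S))$ and $P$ is a homeomorphism), and that the relative size $\rho$ is, by definition, read off a Jordan form, hence a similarity invariant. So we may assume $T$ is already in real Jordan form. Next, if $T$ fails to be invertible --- in particular if some block has zero modulus --- then $\Oc(M,T)\subseteq M\cup\mathrm{range}(T)$ for every subspace $M$; since $\rho\le N$, any candidate $(\rho-1)$-supercyclic subspace is proper, so $\Oc(M,T)$ lies in the union of two proper subspaces of $\R^N$ and cannot be dense. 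Thus we may assume every modulus is non-zero.

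Then I would normalise the moduli. Because the image of a subspace under a linear map is again a subspace, and a subspace is invariant under multiplication by any non-zero scalar, one has $\Oc(M,cT)=\bigcup_{k\ge0}c^kT^k(M)=\bigcup_{k\ge0}T^k(M)=\Oc(M,T)$ for every $c>0$; hence, replacing $T$ by a suitable positive multiple, we may assume the largest modulus of $T$ is $\le1$. Grouping the Jordan blocks according to their finitely many distinct moduli and factoring the moduli out, we write $T=\bigoplus_{i=1}^{\gamma}a_i\mathcal{C}_i$ with $0<|a_1|<\cdots<|a_\gamma|\le1$ and each $\mathcal{C}_i$ a direct sum of Jordan blocks of modulus one; writing $\rho_i$ for the relative size of $\mathcal{C}_i$, we have $\rho=\sum_{i=1}^{\gamma}\rho_i$.

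Now each $\mathcal{C}_i$ acts invertibly on its ambient $\R^{N_i}$, so $\R^{N_i}$ itself is $N_i$-supercyclic for $\mathcal{C}_i$; consequently the set of integers $m$ for which $\mathcal{C}_i$ is $m$-supercyclic is non-empty, and since $n$-supercyclicity is non-decreasing in $n$ (enlarge a supercyclic subspace by one dimension), it possesses a least element $m_i$, which is then optimal. Theorem \ref{theodsbloc} tells us $\mathcal{C}_i$ is not $(\rho_i-1)$-supercyclic, whence $m_i\ge\rho_i$. Theorem \ref{theovpdiff} now applies to $T=\bigoplus_{i=1}^{\gamma}a_i\mathcal{C}_i$ and yields that $T$ is not $\big((\sum_{i=1}^{\gamma}m_i)-1\big)$-supercyclic; since $\sum_{i=1}^{\gamma}m_i\ge\sum_{i=1}^{\gamma}\rho_i=\rho$, monotonicity of $n$-supercyclicity forces $T$ not to be $(\rho-1)$-supercyclic, which is the claim.

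Regarding difficulty: essentially all the mathematical content is already encapsulated in Theorems \ref{theodsbloc} and \ref{theovpdiff}, and the only things demanding care are the bookkeeping reductions above --- handling a non-invertible $T$, checking that positive rescaling does not affect $n$-supercyclicity, and verifying that the optimal constants $m_i$ exist. None of these is a genuine obstacle, so I expect the corollary to come out in a short argument once the two theorems are in hand.
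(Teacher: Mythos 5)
Your proof is correct and follows essentially the same route as the paper: reduce to real Jordan form, group blocks by modulus into unimodular pieces $\mathcal{C}_i$ with scalars $a_i$, apply Theorem \ref{theodsbloc} to bound each optimal constant by $m_i\ge\rho_i$, and then invoke Theorem \ref{theovpdiff} and monotonicity of $n$-supercyclicity. You are somewhat more explicit than the paper about the preliminary reductions (similarity invariance of both notions, excluding non-invertible $T$, the harmless rescaling, and the existence of the optimal constants $m_i$), but these are exactly the points the paper elides with ``without loss of generality.''
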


\begin{proof}
Without loss of generality, one may assume that $T$ is in Jordan form and also upon reordering blocks and considering a multiple of $T$ instead of $T$ itself, one may assume that the sequence formed with each Jordan block's modulus satisfies $\vert a_1\vert\leq\ldots\leq \vert a_\gamma\vert \leq1$.
As a consequence, one may realise $T$ as a direct sum of matrices $S_1,\ldots,S_t$, where $S_1$ contains all Jordan blocks with the smallest modulus and so on. Let $\rho^{(j)}$ denote the relative size of the matrix $S_j$, $j=1,\ldots,t$.
First, Theorem \ref{theodsbloc} implies that every matrix $S_j$ is no less than $\rho^{(j)}$-supercyclic. Then, one shall use Theorem \ref{theovpdiff} to come back to $T$, hence $T$ is no less than $\left(\sum_{j=1}^{t}\rho^{(j)}\right)$-supercyclic.
Then, one just have to recall the definition of $\rho$ and of $\rho^{(j)}$ providing $\left(\sum_{j=1}^{t}\rho^{(j)}\right)=\rho$. This proves the corollary.
\end{proof}

A direct application of the preceding corollary yields to a more concrete result:

\begin{cor}\label{corprincipal}
 Let $N\geq2$. There is no $(\lfloor\frac{N+1}{2}\rfloor-1)$-supercyclic operator on $\R^N$.
\\Moreover, there always exists a $(\lfloor\frac{N+1}{2}\rfloor)$-supercyclic operator on $\R^N$.
\end{cor}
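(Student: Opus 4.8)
The statement to prove is Corollary \ref{corprincipal}: for $N\geq 2$, there is no $(\lfloor\frac{N+1}{2}\rfloor-1)$-supercyclic operator on $\R^N$, and there always exists a $(\lfloor\frac{N+1}{2}\rfloor)$-supercyclic operator on $\R^N$. The plan is to obtain this as a direct consequence of Corollary \ref{coravprinc} together with Example \ref{exensup}.

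First I would handle the non-existence part. Let $T$ be any operator on $\R^N$. Corollary \ref{coravprinc} tells us that $T$ is not $(\rho-1)$-supercyclic, where $\rho$ is the relative size of $T$ (in Jordan form). By the remark following the definition of primary matrices, and more generally from the structure of the real Jordan decomposition, one always has $\rho\geq\lfloor\frac{N+1}{2}\rfloor$: indeed each classical Jordan block of size $k$ contributes $k$ to both $\rho$ and $N$, while each real Jordan block of relative size $k$ contributes $k$ to $\rho$ but $2k$ to $N$, so $\rho$ is minimized when $T$ consists of as many real blocks as possible, giving $\rho=N/2$ if $N$ is even and $\rho=(N+1)/2$ if $N$ is odd, i.e. $\rho\geq\lfloor\frac{N+1}{2}\rfloor$ in all cases. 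Since $n$-supercyclicity implies $n'$-supercyclicity for every $n'\geq n$ (one may enlarge the supercyclic subspace), an operator that is $(\lfloor\frac{N+1}{2}\rfloor-1)$-supercyclic would in particular be $(\rho-1)$-supercyclic, because $\lfloor\frac{N+1}{2}\rfloor-1\leq\rho-1$. This contradicts Corollary \ref{coravprinc}, so no operator on $\R^N$ is $(\lfloor\frac{N+1}{2}\rfloor-1)$-supercyclic.

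For the existence part, I would simply exhibit the operators of Example \ref{exensup}: if $N=2M$, take the direct sum of $M$ rotations $R_{\theta_1}\oplus\cdots\oplus R_{\theta_M}$ with $\{\pi,\theta_1,\ldots,\theta_M\}$ linearly independent over $\Q$, which is $M=\lfloor\frac{N+1}{2}\rfloor$-supercyclic; if $N=2M+1$, take $R_{\theta_1}\oplus\cdots\oplus R_{\theta_M}\oplus(1)$ with the same independence hypothesis, which is $(M+1)=\lfloor\frac{N+1}{2}\rfloor$-supercyclic. (For $N=1$ or $N=2$ this reduces to the classical one-dimensional supercyclic operator or a $\Q$-irrational rotation, recovering Herzog's and Hilden--Wallen's examples.) Since such $\theta_i$ always exist, the second assertion follows. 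I do not expect a genuine obstacle here: the only mild point requiring care is the elementary inequality $\rho\geq\lfloor\frac{N+1}{2}\rfloor$ and the monotonicity of $n$-supercyclicity in $n$, both of which are routine; the substance of the argument is entirely contained in the already-established Corollary \ref{coravprinc} and Example \ref{exensup}.
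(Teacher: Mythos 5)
Your proof is correct and follows exactly the same route as the paper: the non-existence part is deduced from Corollary \ref{coravprinc} via the elementary bound $\rho\geq\lfloor\frac{N+1}{2}\rfloor$ on the relative size of any real Jordan form, and the existence part is read off from Example \ref{exensup}. The only addition you make beyond the paper's one-line argument is to spell out the monotonicity of $n$-supercyclicity in $n$, which the paper leaves implicit; this is harmless and if anything slightly more careful.
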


\begin{proof}
The first part follows from Corollary \ref{coravprinc}. Indeed, if $N$ is even the lowest relative size of a matrix is $\frac{N}{2}$ and is $\frac{N+1}{2}$ if $N$ is odd. Thus the relative size of a matrix cannot be lower than $\lfloor\frac{N+1}{2}\rfloor$ on $\R^N$. The second part is due to Example \ref{exensup}.
\end{proof}

\begin{quest}
Does there exist a theorem similar to Theorem \ref{theovpdiff} in the case of a direct sum of Jordan blocks of modulus one?
\end{quest}

\begin{quest}
Does there exist a $(2N-2)$-supercyclic real Jordan block on $\R^{2N}$?
\\If so, what is the best supercyclic constant for a real Jordan block on $\R^{2N}$?
\end{quest}

\section{Strong $n$-supercyclicity}

The aim of this section is to study the existence of strong $n$-supercyclic operators in $\R^N$. Of course, this is interesting only if $k\leq N$. Bourdon, Feldman and Shapiro \cite{Bou} answer this question for the complex case. Indeed, they prove that $n$-supercyclicity cannot occur non-trivially in finite complex dimension, and thus strong $n$-supercyclicity cannot either.
However, in the real setting, we noticed in the previous section that $n$-supercyclicity can occur and thus the question for strong $n$-supercyclicity is still open.
For this purpose we need the following proposition from \cite{Ernststrongnsupop}. It provides a more concrete definition of strongly $n$-supercyclic operators:

\begin{prop}{(Proposition 1.13 \cite{Ernststrongnsupop})}\label{propbeq}
\textit{
Let $X$ be a completely separable Baire vector space. 
The following are equivalent:
\begin{enumerate}[(i)]
 \item $T$ is strongly $n$-supercyclic.
\item There exists an $n$-dimensional subspace $L$ such that for every $i\in\Z_{+}$, $T^{i}(L)$ is $n$-dimensional and $\mathcal{B}:=\cup_{i=1}^{\infty}\pi_{n}^{-1}(\tilde{T}^{i}(L))$ is dense in $X^{n}$.
\item There exists an $n$-dimensional subspace $L$ such that for every $i\in\Z_{+}$, $T^{i}(L)$ is $n$-dimensional and $\mathcal{E}:=\cup_{i=1}^{\infty}(T^{i}(L)\times\cdots\times T^{i}(L))$ is dense in $X^{n}$.
\end{enumerate}}
\end{prop}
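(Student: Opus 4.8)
The plan is to reduce both the notion of strong $n$-supercyclicity and the density statements in $X^{n}$ to three elementary facts about $\pi_{n}$ and the pair $(X_{n},X^{n})$, and then to chase definitions. One may assume $\dim X\geq n$, since otherwise $\Pn_{n}(X)=\emptyset$ and all three conditions fail trivially. First, by the very definition of the topology on $\Pn_{n}(X)$ recalled above, $\pi_{n}$ is both continuous (so $\pi_{n}^{-1}(U)$ is open in $X_{n}$ whenever $U$ is open) and open (so $\pi_{n}(V)$ is open in $\Pn_{n}(X)$ whenever $V$ is open in $X_{n}$). Second, $X_{n}$ is open in $X^{n}$. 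Third, $X_{n}$ is dense in $X^{n}$: inside any product neighbourhood $\prod_{j}(v_{j}+N_{j})$ one constructs a linearly independent tuple entrywise, using at each step that a neighbourhood of a point in a vector space of dimension greater than $k$ is never contained in a $k$-dimensional subspace. The last two facts together say that a subset of $X_{n}$ is dense in $X_{n}$ if and only if it is dense in $X^{n}$. Finally, under the standing hypothesis that $T^{i}(L)$ is $n$-dimensional, $\tilde{T}^{i}(L)$ denotes $T^{i}(L)$ viewed as a point of $\Pn_{n}(X)$, and $\pi_{n}^{-1}(\tilde{T}^{i}(L))$ --- the set of ordered bases of $T^{i}(L)$ --- is contained in $T^{i}(L)\times\cdots\times T^{i}(L)$.

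Granting this, the three implications are short. For $(i)\Rightarrow(ii)$, take $L:=M$ for a strongly $n$-supercyclic subspace $M$; then every $T^{i}(L)$ is $n$-dimensional by definition, and given a nonempty open $W\subseteq X^{n}$, the set $W\cap X_{n}$ is nonempty (density of $X_{n}$) and open in $X_{n}$, so $\pi_{n}(W\cap X_{n})$ is nonempty and open in $\Pn_{n}(X)$; density of the orbit of $M$ yields $i$ and $w\in W\cap X_{n}$ with $\pi_{n}(w)=\tilde{T}^{i}(M)$, whence $w\in\pi_{n}^{-1}(\tilde{T}^{i}(M))\cap W\subseteq\mathcal{B}\cap W$. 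For $(iii)\Rightarrow(i)$, let $U\subseteq\Pn_{n}(X)$ be nonempty and open; then $\pi_{n}^{-1}(U)$ is a nonempty subset of $X_{n}$ that is open in $X^{n}$ by the first two facts, so density of $\mathcal{E}$ provides an index $i$ and a tuple $w=(w_{1},\dots,w_{n})$ in both $\pi_{n}^{-1}(U)$ and $(T^{i}(L))^{n}$; since $w\in X_{n}$ its entries are linearly independent, and being contained in the $n$-dimensional space $T^{i}(L)$ they span it, so $\tilde{T}^{i}(L)=\pi_{n}(w)\in U$. As $L$ is $n$-dimensional and every $T^{i}(L)$ is $n$-dimensional, $L$ is a strongly $n$-supercyclic subspace.

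The implication $(ii)\Rightarrow(iii)$ is immediate from the last observation of the first paragraph, since $\mathcal{B}=\bigcup_{i}\pi_{n}^{-1}(\tilde{T}^{i}(L))\subseteq\bigcup_{i}\big(T^{i}(L)\times\cdots\times T^{i}(L)\big)=\mathcal{E}$, so density of $\mathcal{B}$ forces density of $\mathcal{E}$. I do not expect a real obstacle: the only point requiring attention is the traffic between $X_{n}$ and $X^{n}$ --- openness of $X_{n}$ is what turns continuity-pullbacks of Grassmannian opens into $X^{n}$-open sets, while density of $X_{n}$ is what lets an $X^{n}$-open set be caught inside $X_{n}$ --- and, relatedly, checking that $X_{n}$ is dense in $X^{n}$ for an arbitrary topological vector space with $\dim X\geq n$. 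The separability and Baire assumptions on $X$ are not actually needed for this equivalence; they come into play elsewhere in \cite{Ernststrongnsupop}.
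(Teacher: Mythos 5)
The paper does not actually prove this proposition; it imports it verbatim as Proposition~1.13 of \cite{Ernststrongnsupop}, so there is no in-paper argument to compare against. Your proof is correct and is, in my judgement, the natural one: isolate the four facts that $\pi_{n}$ is continuous and open, that $X_{n}$ is open in $X^{n}$, that $X_{n}$ is dense in $X^{n}$ when $\dim X\geq n$, and that $\pi_{n}^{-1}(\tilde{T}^{i}(L))\subseteq (T^{i}(L))^{n}$, and then run the cycle $(i)\Rightarrow(ii)\Rightarrow(iii)\Rightarrow(i)$ as a pure definition-chase. Your density argument for $X_{n}$ is also correct since a proper linear subspace of a topological vector space always has empty interior, and your remark that the Baire and separability hypotheses are not used in this equivalence is sound --- they are there for other results of \cite{Ernststrongnsupop}.

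Two small points if you wish to be fastidious. First, the unions defining $\mathcal{B}$ and $\mathcal{E}$ start at $i=1$ whereas the orbit in the definition of strong $n$-supercyclicity starts at $k=0$; in $(i)\Rightarrow(ii)$ one should therefore note that $\Pn_{n}(X)$ has no isolated points when $\dim X>n$ (and is a singleton when $\dim X=n$), so that deleting the single point $\tilde{T}^{0}(M)$ from a dense orbit leaves it dense. Second, the assertion that $X_{n}$ is open in $X^{n}$ is taken from the paper's own description of the Grassmannian topology; it is worth being aware that in full generality this relies on finite-dimensional subspaces being topologically complemented, which is not automatic in an arbitrary topological vector space, but this is a hypothesis the framework already grants you.
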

\begin{rem}
Moreover, from the definition of strong $n$-supercyclicity or with the previous proposition, one may observe that if $T$ is a strongly $n$-supercyclic operator on $\R^{N}$ with $n\leq N$, then $T$ is bijective.
\end{rem}
We turn out to the case of strongly $n$-supercyclic operators on a real finite dimensional vector space.
Our first result is interesting and provides a partial answer to the question:

\begin{prop}\label{propfortedualite}
Let $n<N$.
An operator $T$ on $\mathbb{R}^{N}$ is strongly $n$-supercyclic if and only if $(T^{-1})^{\ast}$ is also strongly $(N-n)$-supercyclic and the strong $n$-supercyclic subspaces for $T$ are orthogonal to the strongly $(N-n)$-supercyclic subspaces of $(T^{-1})^{\ast}$.
\end{prop}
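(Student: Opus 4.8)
The plan is to deduce the proposition from the classical order-reversing homeomorphism $M\mapsto M^{\perp}$ between $\mathbb{P}_n(\R^N)$ and $\mathbb{P}_{N-n}(\R^N)$, combined with one elementary identity relating the action of $T$ on subspaces to that of $(T^{-1})^{\ast}$ on orthogonal complements. First I would dispose of invertibility: in the ``only if'' direction, $T$ strongly $n$-supercyclic forces $T$ to be bijective (as noted in the remark following Proposition~\ref{propbeq}), so $(T^{-1})^{\ast}$ is well defined; in the ``if'' direction $(T^{-1})^{\ast}$ is assumed to exist, so $T$ is again invertible. Hence throughout we may assume $T$ invertible, and then every $T^{k}(M)$ is automatically $n$-dimensional, so the dimension clause in the definition of strong $n$-supercyclicity is free (and similarly for $(T^{-1})^{\ast}$, whose inverse is $T^{\ast}$). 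I would then record that $\Phi\colon\mathbb{P}_n(\R^N)\to\mathbb{P}_{N-n}(\R^N)$, $\Phi(M)=M^{\perp}$, is a homeomorphism: the topology on $\mathbb{P}_k(\R^N)$ defined in the Preliminaries is the usual (quotient) Grassmannian topology, which is metrised by $d(M,M')=\Vert P_M-P_{M'}\Vert$ with $P_M$ the orthogonal projection onto $M$; since $P_{M^{\perp}}=\mathrm{Id}-P_M$, the map $\Phi$ is a bijective isometry, with inverse $L\mapsto L^{\perp}$.

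The algebraic core is the identity $\bigl(T(M)\bigr)^{\perp}=(T^{\ast})^{-1}\bigl(M^{\perp}\bigr)=(T^{-1})^{\ast}\bigl(M^{\perp}\bigr)$, valid for every subspace $M$: indeed $y\perp T(M)$ if and only if $\langle T^{\ast}y,x\rangle=0$ for all $x\in M$, i.e. $T^{\ast}y\in M^{\perp}$. Iterating and using $\bigl((T^{-1})^{\ast}\bigr)^{-1}=T^{\ast}$ gives $\bigl(T^{k}(M)\bigr)^{\perp}=\bigl((T^{-1})^{\ast}\bigr)^{k}\bigl(M^{\perp}\bigr)$ for every $k\in\Z_{+}$; equivalently, $\Phi$ conjugates the map $M\mapsto T(M)$ on $\mathbb{P}_n(\R^N)$ to the map $L\mapsto (T^{-1})^{\ast}(L)$ on $\mathbb{P}_{N-n}(\R^N)$. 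Being a homeomorphism, $\Phi$ preserves density, so $\{T^{k}(M):k\in\Z_{+}\}$ is dense in $\mathbb{P}_n(\R^N)$ if and only if $\{((T^{-1})^{\ast})^{k}(M^{\perp}):k\in\Z_{+}\}$ is dense in $\mathbb{P}_{N-n}(\R^N)$. With the dimension remark this becomes $M\in\ES_n(T)\iff M^{\perp}\in\ES_{N-n}((T^{-1})^{\ast})$, so $\Phi$ restricts to a bijection $\ES_n(T)\to\ES_{N-n}((T^{-1})^{\ast})$; in particular $\ES_n(T)\neq\emptyset$ iff $\ES_{N-n}((T^{-1})^{\ast})\neq\emptyset$. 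The first statement is the claimed equivalence of strong $n$-supercyclicity of $T$ and strong $(N-n)$-supercyclicity of $(T^{-1})^{\ast}$; the bijection, which sends each strongly $n$-supercyclic subspace of $T$ to its orthogonal complement — a strongly $(N-n)$-supercyclic subspace of $(T^{-1})^{\ast}$, and conversely — is precisely the orthogonality assertion.

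I expect the only genuinely delicate point to be the justification that $M\mapsto M^{\perp}$ is a homeomorphism for the abstractly defined topology of the Preliminaries, i.e. that this topology coincides with the standard metrisable Grassmannian topology. This can either be cited from \cite{Ernststrongnsupop}, or checked by hand: the coarsest topology making $\pi_k$ open and continuous turns out to be exactly the quotient topology, and via Gram--Schmidt the quotient of the linearly independent $k$-tuples induces the same topology as the quotient of the orthonormal $k$-frames, which is compact and Hausdorff, so one concludes with the fact that a continuous bijection from a compact space onto a Hausdorff space is a homeomorphism. Everything else is then a short formal computation.
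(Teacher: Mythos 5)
Your proposal is correct and follows essentially the same route as the paper: the paper's proof combines exactly the two ingredients you identify, namely the identity $(T^{i}(M))^{\perp}=(T^{-i})^{\ast}(M^{\perp})$ (the paper's Lemma~\ref{lemfortedualitelem1}, equivalent to your $(T^{k}(M))^{\perp}=((T^{-1})^{\ast})^{k}(M^{\perp})$) and the fact that $\Phi(M)=M^{\perp}$ is a homeomorphism $\mathbb{P}_{n}(\R^{N})\to\mathbb{P}_{N-n}(\R^{N})$ (Lemma~\ref{lemfortedualitelem2}, which the paper cites from Milnor--Stasheff rather than re-proving via the projection metric as you sketch). The only difference is one of exposition: you supply inline justifications (invertibility, metrisability of the Grassmannian, the adjoint computation) that the paper delegates to the preceding remark and to the two auxiliary lemmas.
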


This duality property is very useful if one combines it with Corollary \ref{corprincipal}, by the way we get:

\begin{cor}\label{corfortedualite}
There is no strongly $n$-supercyclic operators on $\mathbb{R}^{2N+1}$ for any $N\geq 1$ and any $1\leq n<N$.
\\There is no strongly $n$-supercyclic operators on $\mathbb{R}^{2N}$ for any $N\geq 2$ and any $1\leq n<N$, $n\neq\frac{N}{2}$.
\end{cor}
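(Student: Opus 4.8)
The plan is to derive the corollary from the duality of Proposition~\ref{propfortedualite} combined with the lower bound on the supercyclic constant provided by Corollary~\ref{corprincipal}. The only preliminary observation I need is that every strongly $n$-supercyclic operator is $n$-supercyclic: if $M\in\Pn_{n}(\R^{d})$ is a strongly $n$-supercyclic subspace for $T$ on $\R^{d}$, then $\{T^{k}(M),\,k\in\Z_{+}\}$ is dense in $\Pn_{n}(\R^{d})$; since the union of a family of $n$-dimensional subspaces which is dense in the Grassmannian is dense in $\R^{d}$ (any non-zero vector lies in some $n$-dimensional subspace, and every subspace close to it in the Grassmannian topology contains a vector close to it), the orbit $\Oc(M,T)=\cup_{k}T^{k}(M)$ is dense in $\R^{d}$, so that $M$ is an $n$-supercyclic subspace for $T$. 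The same remark applies verbatim to $(T^{-1})^{\ast}$.

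Assume now, for a contradiction, that $T$ on $\R^{d}$ is strongly $n$-supercyclic with $1\le n<d$. By the observation above $T$ is $n$-supercyclic, so Corollary~\ref{corprincipal} forces $n\ge\lfloor\frac{d+1}{2}\rfloor$. By Proposition~\ref{propfortedualite}, $(T^{-1})^{\ast}$ is strongly $(d-n)$-supercyclic on $\R^{d}$; since $1\le d-n$, the observation applies once more and $(T^{-1})^{\ast}$ is $(d-n)$-supercyclic, whence $d-n\ge\lfloor\frac{d+1}{2}\rfloor$ by Corollary~\ref{corprincipal}. Adding these two inequalities yields $d\ge 2\lfloor\frac{d+1}{2}\rfloor$.

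It remains to read off the statement according to the parity of $d$. If $d$ is odd, say $d=2k+1$, then $2\lfloor\frac{d+1}{2}\rfloor=2(k+1)=d+1>d$, which is absurd; hence $\R^{2N+1}$ carries no strongly $n$-supercyclic operator for any $n$ with $1\le n<2N+1$, in particular for $1\le n<N$. If $d$ is even, say $d=2k$, then $2\lfloor\frac{d+1}{2}\rfloor=2k=d$, so the inequalities $n\ge k$ and $d-n\ge k$ leave only the possibility $n=k=\tfrac d2$; thus on $\R^{2N}$ every $n$ with $1\le n<2N$, $n\ne N$, is excluded, which in particular covers every $n$ with $1\le n<N$ and $n\ne\tfrac N2$ and gives the stated assertion. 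I do not expect any genuine obstacle here, since Proposition~\ref{propfortedualite} and Corollary~\ref{corprincipal} carry all the weight; the one step that is not a one-line citation is the passage from density in the Grassmannian to density of the union of the subspaces, and that is routine. The genuinely self-dual value $n=\tfrac d2$ in even dimension is intrinsically beyond the reach of this method and must be handled separately.
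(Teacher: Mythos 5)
Your proof is correct and is precisely the argument the paper has in mind: the corollary is stated immediately after Proposition~\ref{propfortedualite} with the remark that it follows by combining that duality with Corollary~\ref{corprincipal}, which is exactly what you do (together with the routine observation, also available from Proposition~\ref{propbeq}(iii), that strong $n$-supercyclicity implies $n$-supercyclicity). Note that your argument in fact yields the stronger and evidently intended statement --- no strongly $n$-supercyclic operator on $\R^{2N+1}$ for any $1\le n<2N+1$, nor on $\R^{2N}$ for any $1\le n<2N$ with $n\ne N$ --- which is what is actually invoked later in the proof of Theorem~\ref{theononexistencedimfinie}; the ranges ``$1\le n<N$'' and ``$n\ne N/2$'' in the printed corollary appear to be indexing typos.
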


This corollary provides many examples of $n$-supercyclic operators that are not strongly $n$-supercyclic and thus answers the question of Shkarin \cite{Shkauniv} by proving that $n$-supercyclicity and strong $n$-supercyclicity are not equivalent. 
\begin{exe}{A 2-supercyclic operator that is not strongly 2-supercyclic}
\\
Any rotation on $\R^3$ around any one-dimensional subspace and with angle linearly independent with $\pi$ on $\Q$ is 2-supercyclic but not strongly 2-supercyclic.
\end{exe}

We are now turning to the proof of Proposition \ref{propfortedualite}. We need the two following well-known lemmas:

\begin{lem}\label{lemfortedualitelem1}
Let $M$  be a subspace of $\mathbb{R}^{N}$ and let $T$ be an automorphism on $\mathbb{R}^{N}$.
Then, for any $i\in\mathbb{N}$, $(T^{i}(M))^{\perp}=(T^{-i})^{\ast}(M^{\perp})$.
\end{lem}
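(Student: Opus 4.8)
The plan is to prove the identity first for $i=1$ and then bootstrap to arbitrary $i$ by applying the $i=1$ case to the automorphism $T^{i}$ in place of $T$. The only algebraic ingredients needed are the defining relation $\langle Tx,y\rangle=\langle x,T^{\ast}y\rangle$ of the adjoint, together with the observation that, since $T$ is an automorphism, so is $T^{\ast}$ (indeed $\det T^{\ast}=\det T\neq0$), and moreover $(T^{\ast})^{-1}=(T^{-1})^{\ast}$; this last identity follows at once by taking adjoints on both sides of $TT^{-1}=T^{-1}T=\mathrm{Id}$.

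For the case $i=1$, I would simply unwind the definitions. A vector $y$ lies in $(T(M))^{\perp}$ if and only if $\langle y,Tm\rangle=0$ for every $m\in M$, which by the adjoint relation is equivalent to $\langle T^{\ast}y,m\rangle=0$ for every $m\in M$, i.e.\ to $T^{\ast}y\in M^{\perp}$, i.e.\ to $y\in(T^{\ast})^{-1}(M^{\perp})$. Every step in this chain is an equivalence, so $(T(M))^{\perp}=(T^{\ast})^{-1}(M^{\perp})=(T^{-1})^{\ast}(M^{\perp})$, using the commutation identity recalled above to rewrite the right-hand side in the stated form.

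Finally, for general $i\in\mathbb{N}$ I would apply the previous paragraph with the automorphism $T^{i}$ in place of $T$: this gives $(T^{i}(M))^{\perp}=((T^{i})^{-1})^{\ast}(M^{\perp})$, and since $(T^{i})^{-1}=(T^{-1})^{i}$ is exactly the operator denoted $T^{-i}$, we obtain $(T^{i}(M))^{\perp}=(T^{-i})^{\ast}(M^{\perp})$. There is no genuine obstacle here — it is a routine duality computation — and the only point worth a line of justification is the commutation $(T^{\ast})^{-1}=(T^{-1})^{\ast}$ that lets us present the answer in the form required by the subsequent proof of Proposition~\ref{propfortedualite}.
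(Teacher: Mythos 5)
Your proof is correct, and it is the standard duality computation. The paper itself does not supply a proof of this lemma (it simply remarks that it is classical), so there is nothing to compare approaches against; your chain of equivalences $y\in(T(M))^{\perp}\Leftrightarrow T^{\ast}y\in M^{\perp}\Leftrightarrow y\in(T^{\ast})^{-1}(M^{\perp})$, together with $(T^{\ast})^{-1}=(T^{-1})^{\ast}$ and the reduction of the general case to $i=1$ via the automorphism $T^{i}$, is exactly the argument one would expect.
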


\begin{lem}\label{lemfortedualitelem2}
Let $\Phi: \mathbb{P}_{n}(\mathbb{R}^{N})\rightarrow \mathbb{P}_{N-n}(\mathbb{R}^{N})$ be defined by the formula $\Phi(M)=M^{\perp}$ for every $M\in\mathbb{P}_{n}(\R^{N})$. Then $\Phi$  is a homeomorphism.
\end{lem}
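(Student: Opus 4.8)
The plan is to show that $\Phi$ is a continuous bijection whose inverse is given by the same recipe with $n$ and $N-n$ interchanged; continuity of that inverse then follows by symmetry, yielding the homeomorphism. That $\Phi$ is a bijection is immediate from $(M^{\perp})^{\perp}=M$ for every subspace: the map $\Psi\colon\Pn_{N-n}(\R^{N})\to\Pn_{n}(\R^{N})$, $L\mapsto L^{\perp}$, is a two-sided inverse of $\Phi$. So the whole statement reduces to the continuity of $M\mapsto M^{\perp}$ from $\Pn_{n}(\R^{N})$ to $\Pn_{N-n}(\R^{N})$ (and, by the symmetric argument, of $\Psi$).

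For the continuity I would use that $\pi_{n}\colon X_{n}\to\Pn_{n}(\R^{N})$ is a continuous open surjection, hence a quotient map, so that $\Phi$ is continuous if and only if $\Phi\circ\pi_{n}\colon X_{n}\to\Pn_{N-n}(\R^{N})$ is continuous; the latter I would check by exhibiting, near each point, a continuous local lift through $\pi_{N-n}$. Concretely, fix $(x_{1}^{0},\dots,x_{n}^{0})\in X_{n}$ and choose standard basis vectors $e_{j_{1}},\dots,e_{j_{N-n}}$ completing it to a basis of $\R^{N}$. On the open set $U\subset X_{n}$ of those $(x_{1},\dots,x_{n})$ for which $x_{1},\dots,x_{n},e_{j_{1}},\dots,e_{j_{N-n}}$ is still a basis (an open condition, non-vanishing of a determinant, and containing the chosen point), apply the Gram--Schmidt process to this ordered $N$-tuple and let $g(x_{1},\dots,x_{n})$ be the last $N-n$ vectors of the orthonormalised tuple. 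Since the Gram--Schmidt formulas express each output vector as a rational function of the inputs with denominators that are norms of nonzero vectors, $g\colon U\to X_{N-n}$ is continuous; and because the first $n$ orthonormalised vectors span $\Span\{x_{1},\dots,x_{n}\}$, the remaining $N-n$ of them form an orthonormal basis of $\Span\{x_{1},\dots,x_{n}\}^{\perp}$. Hence $\pi_{N-n}\circ g=\Phi\circ\pi_{n}$ on $U$, so $\Phi\circ\pi_{n}$ is continuous on $U$; as such sets $U$ cover $X_{n}$, it is continuous, whence $\Phi$ is continuous.

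Running the identical argument with $n$ replaced by $N-n$ shows that $\Psi=\Phi^{-1}$ is continuous, so $\Phi$ is a homeomorphism. I do not anticipate any genuine obstacle: the only points that deserve a line of care are the standard fact that a continuous open surjection is a quotient map (so that continuity of $\Phi$ may be tested after precomposing with $\pi_{n}$), and the continuity of the Gram--Schmidt local section, which is routine once one observes that its denominators never vanish on $U$.
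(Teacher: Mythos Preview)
Your argument is correct. The bijectivity via $(M^{\perp})^{\perp}=M$ is immediate, and your continuity proof is sound: $\pi_{n}$ is a continuous open surjection, hence a quotient map, so it suffices to check continuity of $\Phi\circ\pi_{n}$; your Gram--Schmidt local lift $g$ does this cleanly, since the first $n$ outputs of Gram--Schmidt span $\Span\{x_{1},\dots,x_{n}\}$ and the remaining $N-n$ therefore form an orthonormal basis of its orthogonal complement.

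As for comparison with the paper: the paper does not supply its own proof of this lemma but simply states that it ``may be found and proven in \cite{Mil}'' (Milnor--Stasheff, \emph{Characteristic Classes}). Your write-up is thus a self-contained argument where the paper defers to a reference. The Milnor--Stasheff treatment works via the identification of $\Pn_{n}(\R^{N})$ with a homogeneous space $O(N)/(O(n)\times O(N-n))$, under which $\Phi$ corresponds to swapping the two orthogonal factors and is manifestly a homeomorphism. Your Gram--Schmidt approach is more elementary and stays within the quotient-topology description already set up in the paper's introduction, which makes it a better fit for this particular exposition; the homogeneous-space viewpoint, on the other hand, generalises more readily and makes the smoothness (not just continuity) of $\Phi$ transparent.
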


The first lemma is a classical one and the second may be found and proven in \cite{Mil}.
Then, the proof of the proposition is straightforward:\\
\renewcommand{\proofname}{Proof of Proposition \ref{propfortedualite}}
\begin{proof}
The combination of Lemma \ref{lemfortedualitelem1} and Lemma \ref{lemfortedualitelem2} when $M$ is a strongly $n$-supercyclic subspace for $T$, implies that $\Phi(\{T^{i}(M)\}_{i\in\mathbb{N}})=\{(T^{-i})^{\ast}(M^{\perp})\}_{i\in\mathbb{N}}$ is dense in $\mathbb{P}_{N-n}(\mathbb{R}^{N})$.
\end{proof}
\renewcommand{\proofname}{Proof}
\subsection{Strongly 2-supercyclic operators on $\R^4$}
The idea is to prove by induction that there is no strongly $n$-supercyclic operator on $\R^N$ for $N\geq3$ and $1\leq n<N$. The first step is to prove this for $N$ small. It is already done for $N=3$ by Corollary \ref{corfortedualite}, we now focus on the case $N=4$. To this purpose, we begin with characterising 2-supercyclic subspaces for a direct sum of rotations and we prove then that none are strongly 2-supercyclic.

\begin{prop}\label{propesp2sup}

Let $R$ be a direct sum of two rotations: $$R=\left(\begin{array}{cc}
R_{\theta_{1}}&\multicolumn{1}{|c}{0} \\\cline{1-2} 
0&\multicolumn{1}{|c}{R_{\theta_{2}}}\\
\end{array}\right)\text{ with $\{\theta_{1},\theta_{2},\pi\}$ linearly independent over $\Q$.}$$
Then:
$$\ES_{2}(R)=\left\{ \Span\left\{\left(\begin{array}{c}
      x_{1} \\
      x_{2} \\
      ay_{1} \\
      ay_{2}\\
   \end{array}\right),\left(\begin{array}{c}
      bx_{1} \\
      bx_{2} \\
      y_{1} \\
      y_{2}\\
   \end{array}\right)\right),\left(\begin{array}{c}
      x_{1} \\
      x_{2} \\
   \end{array}\right),\left(\begin{array}{c}
      y_{1} \\
      y_{2}\\
   \end{array}\right\}\in\mathbb{R}^{2}\setminus\{0\},ab\neq1\right\}.$$
\end{prop}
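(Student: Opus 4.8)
The plan is to characterize the 2-dimensional subspaces $M$ of $\R^4$ whose orbit under $R$ is dense in the Grassmannian $\Pn_2(\R^4)$, and to show these are exactly the subspaces listed. I would proceed in two directions: first show that any subspace of the stated form is strongly $2$-supercyclic for $R$, and then show conversely that a strongly $2$-supercyclic subspace must have this form.

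For the forward inclusion, fix $x=(x_1,x_2)\neq 0$ and $y=(y_1,y_2)\neq 0$ and scalars $a,b$ with $ab\neq 1$, and set $M=\Span\{(x,ay),(bx,y)\}$ where I abbreviate bi-components as in the paper. Since $ab\neq 1$, for each $k$ the matrix $R^k M$ is spanned by $(R_{\theta_1}^k x, a R_{\theta_2}^k y)$ and $(b R_{\theta_1}^k x, R_{\theta_2}^k y)$, still a $2$-dimensional space (the determinant $1-ab$ of the "$2\times 2$ coefficient part" is unchanged by the isometries $R_{\theta_i}^k$). To prove density in $\Pn_2(\R^4)$, I would use Proposition~\ref{propbeq}(iii): it suffices that $\cup_k (R^k M)^2$ be dense in $(\R^4)^2$. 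A generic pair of vectors in $R^k M$ has the form $\big(R_{\theta_1}^k(\alpha x + \beta b x),\, R_{\theta_2}^k(\alpha a y + \beta y)\big)$ and similarly for the second vector with fresh scalars $\gamma,\delta$; writing things out, the four ``blocks'' one wants to approximate are $R_{\theta_1}^k$ applied to an arbitrary element of $\R x$ (twice, independently, via $\alpha+\beta b$ and $\gamma+\delta b$) and $R_{\theta_2}^k$ applied to an arbitrary element of $\R y$ (twice, independently, via $\alpha a+\beta$ and $\gamma a+\delta$). The $2\times 2$ map $(\alpha,\beta)\mapsto(\alpha+\beta b,\alpha a+\beta)$ is invertible precisely because $ab\neq 1$, so the pairs of coefficients are genuinely free. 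Since $R_{\theta_i}$ is supercyclic on $\R^2$ for $\theta_i$ irrational with $\pi$ over $\Q$, and since $\{\pi,\theta_1,\theta_2\}$ is $\Q$-independent, the Kronecker/Weyl density theorem (exactly as invoked in Example~\ref{exensup}) lets one choose a single sequence $k=n_i$ making all four blocks converge to any prescribed target in $\R^4\times\R^4$. Hence $M\in\ES_2(R)$.

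For the converse, let $M=\Span\{u,v\}$ be strongly $2$-supercyclic for $R$, with $u=(u',u'')$, $v=(v',v'')$ in bi-component notation. First, $M$ cannot contain a nonzero vector of the form $(w',0)$: if it did, then every $R^k M$ would contain $(R_{\theta_1}^k w',0)$, and one checks (as in the second bullet of Proposition~\ref{propjordpas2}, choosing open sets admitting no common line through the origin, or arguing directly with the product description) that the orbit misses an open set of $\Pn_2(\R^4)$ — in fact $M$ would fail even to be $2$-supercyclic by Proposition~\ref{propnrotpasn-1}. Symmetrically $M$ contains no nonzero $(0,w'')$. Consequently, projecting $M$ onto the first factor $\R^2$ and onto the second factor $\R^2$ are both injective, so $M$ is the graph of a linear isomorphism $\varphi:\R^2\to\R^2$, i.e. $M=\{(z,\varphi z):z\in\R^2\}$. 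Next I claim $\varphi$ maps every line to a line in a ``rank-one'' way: more precisely, $M$ must contain two vectors $(x,ay)$ and $(bx,y)$ with $x,y$ fixed nonzero vectors. To see this, note that $R^k M=\{(R_{\theta_1}^k z, R_{\theta_2}^k \varphi R_{\theta_1}^{-k} z)\}$, so $R^k M$ is the graph of $\varphi_k:=R_{\theta_2}^k\varphi R_{\theta_1}^{-k}$; density of $\{R^kM\}$ in $\Pn_2(\R^4)$ forces $\{\varphi_k\}$ (up to the subtlety that $\Pn_2(\R^4)$ also contains non-graph subspaces, handled by the first paragraph's no-$(w',0)$ observation being an open condition violated in the limit) to be dense in the space of $2\times 2$ invertible matrices modulo nothing — but $\varphi_k$ is conjugate-like $R_{\theta_2}^k\varphi R_{\theta_1}^{-k}$, whose singular values equal those of $\varphi$ for all $k$. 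Density then forces $\varphi$ to have a single singular value, i.e. $\varphi$ is a scalar multiple of an orthogonal map; after absorbing an orthogonal change of the bi-component frame (which does not leave the class of subspaces described, since $x,y$ range over all of $\R^2\setminus\{0\}$) one may take $\varphi=cI$, and then $\{(z,cz)\}$ is visibly of the stated form with $a=b$, $ab=c^2$; the general form with $ab\neq 1$ arises by the same reasoning applied with a non-scalar basis of $M$. Finally the constraint $ab\neq 1$ is exactly the condition that $R^kM$ stay $2$-dimensional (the determinant $1-ab$ above), which is required in the definition of strong $n$-supercyclicity.

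The main obstacle is the converse direction, specifically pinning down that a strongly $2$-supercyclic subspace must be a graph with the rigid ``$a$-$b$'' shape rather than an arbitrary graph: one must carefully exploit that the isometries $R_{\theta_1}^k$, $R_{\theta_2}^k$ preserve the singular values of the representing matrix $\varphi$, so that the orbit of $M$ in the Grassmannian is confined to graphs of maps with a fixed singular-value spectrum unless that spectrum is degenerate (a single value), and then check that this is compatible with density only in the degenerate case, which is precisely the parametrized family in the statement. I also expect the bookkeeping of ``non-graph'' subspaces in $\Pn_2(\R^4)$ (limits of graphs) to need a small separate argument, handled by the openness of the conditions ``$M$ contains no nonzero $(w',0)$ and no nonzero $(0,w'')$'' together with the fact that such non-graph subspaces themselves form an open set that the dense orbit must also meet — which is fine, since the no-$(w',0)$ obstruction applies to limits too via Proposition~\ref{propnrotpasn-1}.
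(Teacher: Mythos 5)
The proposal goes wrong in both directions, and the failures point to a misreading of the proposition and to a genuine error in the density argument.

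\textbf{Forward direction.} You set out to prove that each $M=\Span\{(x,ay),(bx,y)\}$ is \emph{strongly} $2$-supercyclic, using Proposition~\ref{propbeq}(iii), and claim that the Kronecker argument lets a single $k$ make both vectors $w_1,w_2\in R^kM$ converge to independently prescribed targets. This is false: every vector in $R^kM$ has first bi-component on the single line $\R\, R_{\theta_1}^k x$, so the pair of first bi-components of $w_1,w_2$ lies in the $2$-dimensional set $(\R\,R_{\theta_1}^kx)^2\subset\R^4$ for every $k$. There is no way to hit a pair of non-collinear target vectors, no matter how $k$ is chosen. This is exactly the obstruction the paper exploits in Corollary~\ref{cordoublerot} to show that $R$ is \emph{not} strongly $2$-supercyclic, so $\ES_2(R)$ taken literally would be $\emptyset$. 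The paper's own proof shows only $2$-supercyclicity of these $M$ (the notation $\ES_2$ in the statement is used loosely; Corollary~\ref{cordoublerot} then uses the proposition precisely to deduce that no such $M$ is strongly $2$-supercyclic). The paper's construction is also different and cleaner: it observes that $x-ay$ has zero second bi-component and $-bx+y$ has zero first bi-component, so $R^i$ applied to a combination $c_1(x-ay)+c_2(-bx+y)$ decouples into two independent supercyclic problems for $R_{\theta_1}$ and $R_{\theta_2}$, solvable simultaneously by Kronecker because $\{\pi,\theta_1,\theta_2\}$ is $\Q$-independent.

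\textbf{Converse direction.} Your first structural claim, that a ($2$-super)cyclic $M$ cannot contain a nonzero vector $(w',0)$, contradicts the proposition itself: every subspace in the stated family \emph{does} contain such a vector, namely $(x,ay)-a(bx,y)=((1-ab)x,\,0)$, and symmetrically a vector $(0,(1-ab)y)$. In fact the stated family is exactly $\{\Span\{(u,0),(0,v)\}:u,v\in\R^2\setminus\{0\}\}$. The citation of Proposition~\ref{propnrotpasn-1} to justify the claim is also off: for $N=2$ that proposition only rules out $1$-supercyclicity. Consequently the ``graph of $\varphi$'' and singular-value discussion that follows does not apply — the relevant subspaces are never graphs. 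The paper's converse is a short boundedness argument: if $M$ is not of the stated form, one may take the first bi-components of a basis either linearly independent or with one of them zero; then the set $\Gamma_1$ of coefficients landing the first half in a fixed bounded $\mathcal{C}_1$ is bounded, hence the second-half image $\Omega$ is bounded, and choosing $\mathcal{C}_2$ outside a ball of radius $\sup\Omega$ exhibits an open set $\mathcal{C}_1\times\mathcal{C}_2$ missed by the orbit, since the rotations preserve these annuli/balls. Your plan to pin down $\varphi$ by singular-value rigidity is an interesting idea, but it neither matches the actual structure of the answer nor patches the failure of the initial ``no $(w',0)$'' claim.
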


\begin{proof}
 We proceed by double inclusion.

First, let $x=\left(\begin{array}{c}
      x_{1} \\
      x_{2} \\
      ay_{1} \\
      ay_{2}\\
   \end{array}\right)$ and $y=\left(\begin{array}{c}
      bx_{1} \\
      bx_{2} \\
      y_{1} \\
      y_{2}\\
   \end{array}\right)$ satisfy the above hypothesis, we are going to prove that the span of these two vectors is $2$-supercyclic for R.\\
Remark that $x-ay$ is non-zero but its third and fourth components are null and $y-bx$ is also non-zero but its two first components are null.\\ 
Let  $U$ and $V$ be two non-empty open sets in $\R^{2}$.
As $\theta_{1},\theta_{2},\pi$ are linearly independent over $\Q$, there exist $i\in\N$ and $c_{1},c_{2}\in\R$ such that
$$c_{1}(1-ab)R_{\theta_{1}}^{i}\left(\begin{array}{c}x_1\\x_2\\\end{array}\right)\in U\text{ and }c_{2}(1-ab)R_{\theta_{2}}^{i}\left(\begin{array}{c}y_1\\y_2\\\end{array}\right)\in V.$$
Thus $R^{i}(c_{1}(x-ay)+c_{2}(-bx+y))\in U\times V$, hence  $\Span\{x,y\}$ is a 2-supercyclic subspace of $R$.

Now for the converse, let us suppose that there exists a two-dimensional subspace $M=\Span\left\{x,y\right\}$ which is 2-supercyclic for $R$ and which is not satisfying the conditions stated in the proposition. Hence, either the family $\left\{\left(\begin{array}{c}
      x_{1} \\
      x_{2} \\
   \end{array}\right),\left(\begin{array}{c}
      y_{1} \\
      y_{2}\\
   \end{array}\right)\right\}$ is linearly independent or we can assume that $\left(\begin{array}{c}y_{1}\\y_{2}\end{array}\right)=\left(\begin{array}{c}0\\0\end{array}\right)$.
Set $\mathcal{C}_{1}:=\{z\in\R^{2}:\Vert z\Vert<1\}$ and define also $\mathcal{C}_{2}:=\{z\in\R^{2}:\Vert z\Vert>t\}$ where $t$ is a positive real number we may choose later.
Then, as $M$ is a 2-supercyclic subspace for $R$, $\{R^{i}(\lambda x+\mu y)\}_{i\in\N,(\lambda,\mu)\in\R^{2}}$ is dense in $\R^{4}$.
As a result, there exist $i\in\N$ and $(\lambda,\mu)\in\R^{2}$ such that 
$$
   R^{i}(\lambda x+\mu y)\in \mathcal{C}_{1}\times \mathcal{C}_{2}
\Leftrightarrow\left(\begin{array}{c}
                      \lambda\left(\begin{array}{c}
                                    x_{1}\\
x_{2}\\
                                   \end{array}\right)+\mu\left(\begin{array}{c}
                                    y_{1}\\
y_{2}\\
                                   \end{array}\right)\\
\lambda\left(\begin{array}{c}
                                    x_{3}\\
x_{4}\\
                                   \end{array}\right)+\mu\left(\begin{array}{c}
                                    y_{3}\\
y_{4}\\
                                   \end{array}\right)\\

                     \end{array}\right)\in R^{-i}_{\theta_{1}}(\mathcal{C}_{1})\times R^{-i}_{\theta_{2}}(\mathcal{C}_{2})=\mathcal{C}_{1}\times\mathcal{C}_{2}
$$

Then, define $$\begin{aligned}
          \Gamma_{1}&:=\left\{(\lambda,\mu)\in\R^{2}:  \lambda\left(\begin{array}{c}
                                    x_{1}\\
				    x_{2}\\
                                   \end{array}\right)+\mu\left(\begin{array}{c}
                                    y_{1}\\
				    y_{2}\\
                                   \end{array}\right)\in\mathcal{C}_{1}
 \right\}&\\
\end{aligned}$$

Since $\mathcal{C}_{1}$ is bounded and the family $\left\{\left(\begin{array}{c}
                                    x_{1}\\
				    x_{2}\\
                                   \end{array}\right);\left(\begin{array}{c}
                                    y_{1}\\
				    y_{2}\\
                                   \end{array}\right)\right\}$ is linearly independent or $\left(\begin{array}{c}y_{1}\\y_{2}\end{array}\right)=\left(\begin{array}{c}0\\0\end{array}\right)$ then one may deduce that $\Gamma_{1}$ is bounded too.
Note that $\Omega:=\left\{\lambda\left(\begin{array}{c}
                                    x_{3}\\
				    x_{4}\\
                                   \end{array}\right)+\mu\left(\begin{array}{c}
                                    y_{3}\\
				    y_{4}\\
                                   \end{array}\right),(\lambda,\mu)\in\Gamma_{1}\right\}$ is also bounded because of the boundedness of $\Gamma_{1}$. We now define $t$ being an upper bound for $\Omega$, then one deduces that $\mathcal{C}_{2}\cap\Omega=\emptyset$. This contradicts the fact that $M$ is a 2-supercyclic subspace for $R$.
So $M$ satisfies the proposition's conditions.\\
\end{proof}

\begin{cor}\label{cordoublerot}

$R$ is not strongly 2-supercyclic on $\mathbb{R}^{4}$.
\end{cor}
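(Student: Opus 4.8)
The plan is to combine the explicit description of the $2$-supercyclic subspaces of $R$ obtained in Proposition \ref{propesp2sup} with the observation that $R$ keeps the orbit of such a subspace confined to a thin family of $2$-planes. First, I would note that a strongly $2$-supercyclic subspace for $R$ is in particular $2$-supercyclic: if $\{R^{i}(M)\}_{i}$ is dense in $\mathbb{P}_{2}(\mathbb{R}^{4})$ then, for every non-zero $v\in\mathbb{R}^{4}$, the $2$-planes meeting a prescribed neighbourhood of $v$ form a non-empty open subset of $\mathbb{P}_{2}(\mathbb{R}^{4})$, so some $R^{i}(M)$ meets it, and hence $\bigcup_{i}R^{i}(M)$ is dense in $\mathbb{R}^{4}$. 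Consequently, by Proposition \ref{propesp2sup}, any candidate strongly $2$-supercyclic subspace $M=\Span\{x,y\}$ must be of the displayed form, with $(x_{1},x_{2}),(y_{1},y_{2})\neq 0$ and $ab\neq 1$.

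Next I would rewrite this form in a more structural way. For such an $M$, solving the $2\times 2$ linear system coming from $\alpha x+\beta y=(x_{1},x_{2},0,0)$ — which is solvable precisely because its determinant is $1-ab\neq 0$ — shows that $(x_{1},x_{2},0,0)\in M$, and symmetrically $(0,0,y_{1},y_{2})\in M$; these two vectors are independent, so
$$M=\Span\{(x_{1},x_{2})\}\oplus\Span\{(y_{1},y_{2})\},$$
a ``product plane'', that is, the direct sum of a line in the first copy of $\mathbb{R}^{2}$ and a line in the second. Since $R=R_{\theta_{1}}\oplus R_{\theta_{2}}$ respects the decomposition $\mathbb{R}^{4}=\mathbb{R}^{2}\oplus\mathbb{R}^{2}$, we get $R^{i}(M)=R_{\theta_{1}}^{i}\bigl(\Span\{(x_{1},x_{2})\}\bigr)\oplus R_{\theta_{2}}^{i}\bigl(\Span\{(y_{1},y_{2})\}\bigr)$, which is again a product plane. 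Thus the whole orbit of $M$ lies inside the set $\mathcal{P}\subset\mathbb{P}_{2}(\mathbb{R}^{4})$ of product planes.

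To conclude I would check that $\mathcal{P}$ is a proper closed subset of $\mathbb{P}_{2}(\mathbb{R}^{4})$. It is the image of the compact space $\mathbb{P}_{1}(\mathbb{R}^{2})\times\mathbb{P}_{1}(\mathbb{R}^{2})$ under the map $(\ell_{1},\ell_{2})\mapsto\ell_{1}\oplus\ell_{2}$, which is continuous by the very definition of the topology on the Grassmannian through $\pi_{2}$; hence $\mathcal{P}$ is compact, and therefore closed in the Hausdorff space $\mathbb{P}_{2}(\mathbb{R}^{4})$. It is proper since, for instance, $\Span\{(1,0,1,0),(0,1,0,1)\}$, the graph of the identity map $\mathbb{R}^{2}\to\mathbb{R}^{2}$, is not of the form $\ell_{1}\oplus\ell_{2}$. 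Therefore $\overline{\{R^{i}(M):i\in\Z_{+}\}}\subseteq\mathcal{P}\subsetneq\mathbb{P}_{2}(\mathbb{R}^{4})$, so the orbit of $M$ in the Grassmannian is never dense. As this applies to every subspace permitted by Proposition \ref{propesp2sup}, $R$ has no strongly $2$-supercyclic subspace.

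There is no genuine obstacle once Proposition \ref{propesp2sup} is in hand: the content is simply that $R$ cannot push a product plane off the subvariety $\mathcal{P}$. The only points needing a short justification are the reduction ``strongly $2$-supercyclic $\Rightarrow$ $2$-supercyclic'' and the closedness of $\mathcal{P}$, both routine; the identity $M=\Span\{(x_{1},x_{2})\}\oplus\Span\{(y_{1},y_{2})\}$ is a two-line computation using $ab\neq 1$.
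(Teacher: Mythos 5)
Your proof is correct, and it reaches the conclusion by a genuinely more geometric route than the paper. Both arguments begin from Proposition \ref{propesp2sup}, which forces any candidate strongly $2$-supercyclic subspace $M$ to be (after your linear-algebra reduction using $ab\neq1$) a ``product plane'' $\ell_1\oplus\ell_2$ with $\ell_j$ a line in the $j$-th copy of $\mathbb{R}^2$. The paper then invokes the characterisation in Proposition \ref{propbeq}: density in $(\mathbb{R}^4)^2$ would require finding, for any two open $U_1,U_2\subset\mathbb{R}^2$, two vectors of a single $R^i(M)$ whose first bi-components land in $U_1$ and $U_2$ respectively; since that first bi-component always lies on the single line $R_{\theta_1}^i(\ell_1)$, choosing $U_1,U_2$ not simultaneously met by any line through the origin yields the contradiction. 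You instead observe directly that $R$ respects the splitting $\mathbb{R}^4=\mathbb{R}^2\oplus\mathbb{R}^2$, so the orbit $\{R^i(M)\}_i$ never leaves the set $\mathcal{P}$ of product planes, and then prove $\mathcal{P}$ is compact (as the continuous image of $\mathbb{P}_1(\mathbb{R}^2)\times\mathbb{P}_1(\mathbb{R}^2)$), hence closed, and proper (the graph of the identity is a witness). The two proofs are essentially dual views of the same obstruction — the orbit's first bi-component projection is $1$-dimensional — but yours buys a cleaner conceptual statement (the orbit is trapped in a proper closed subvariety of the Grassmannian), at the modest cost of having to justify the continuity of $(\ell_1,\ell_2)\mapsto\ell_1\oplus\ell_2$ and the Hausdorffness of $\mathbb{P}_2(\mathbb{R}^4)$, whereas the paper's version stays at the level of vectors and explicit open sets via Proposition \ref{propbeq}.
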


\begin{proof}
Assume that $R$ is strongly 2-supercyclic on $\mathbb{R}^{4}$, then any strongly 2-supercyclic subspace is given by Proposition \ref{propesp2sup}.
Thus if $x,y\in\R^4$ span a strongly 2-supercyclic subspace for $R$, then
$$R^{i}(\lambda x +\mu y)=\left(\begin{array}{c}
      (\lambda+b\mu)R^{i}_{\theta_{1}}\left(\begin{array}{c}
      x_{1} \\
      x_{2}\\
   \end{array}\right) \\
     (\lambda a+\mu)R^{i}_{\theta_{2}}\left(\begin{array}{c}
      y_{1} \\
      y_{2}\\
   \end{array}\right) \\
   \end{array}\right).$$
Moreover, according to Proposition \ref{propbeq} this means that for any two non-empty open sets $U_{1},U_{2}\subset\mathbb{R}^{2}$, there exist $i\in\mathbb{N}$ and $\lambda,\mu,\alpha,\beta\in\mathbb{R}$ such that:
$$\begin{array}{ccc}
(\lambda+b\mu)R^{i}_{\theta_{1}}\left(\begin{array}{c}
      x_{1} \\
      x_{2}\\
   \end{array}\right)\in U_{1}
&\text{ and }&(\alpha+b\beta)R^{i}_{\theta_{1}}\left(\begin{array}{c}
      x_{1} \\
      x_{2}\\
   \end{array}\right)\in U_{2}.
\end{array}$$
But this cannot happen if we choose $U_{1}$ and $U_{2}$ such that there does not exist a straight line passing through the origin and intersecting both $U_1$ and $U_2$. Therefore, $R$ is not strongly 2-supercyclic. 
\end{proof}

We are now going to deal with two other different-shaped operators on $\R^4$.

\begin{prop}\label{propinitrec4}
$\left(\begin{array}{cc}
\multicolumn{1}{c|}{A}  &A \\ \cline{1-2}
0 &\multicolumn{1}{|c}{A} \\
\end{array}\right)$ and $\left(\begin{array}{ccc}
\multicolumn{1}{c|}{A}   &0 \\ \cline{1-2}
0 &\multicolumn{1}{|c}{B} \\
\end{array}\right)$ with $A=\left(\begin{array}{cc}
a  &-b \\ 
b &a \\
\end{array}\right)$ and $B=\left(\begin{array}{cc}
c  &-d \\ 
d &c \\
\end{array}\right)$, $(a,b,c,d)\in\mathbb{R}^{4}$ are not strongly $2$-supercyclic.
\end{prop}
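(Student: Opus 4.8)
The plan is to handle the two matrices separately; write $J:=\left(\begin{smallmatrix}A&A\\ 0&A\end{smallmatrix}\right)$ for the first one and $S:=A\oplus B$ for the second. Two elementary facts will be used throughout: a strongly $2$-supercyclic operator is bijective (the remark after Proposition~\ref{propbeq}), and if $M$ is a strongly $2$-supercyclic subspace then $\bigcup_{i}\big(S^{i}(M)\big)^{2}$ is dense in $(\R^{4})^{2}$ by Proposition~\ref{propbeq}(iii), hence, projecting onto the first coordinate, $\bigcup_{i}S^{i}(M)$ is dense in $\R^{4}$, i.e. $M$ is $2$-supercyclic. For $J$ this settles the matter at once: if $a=b=0$ then $J=0$ is not bijective, hence not strongly $2$-supercyclic; otherwise $A=\lambda R_{\theta}$ with $\lambda=\sqrt{a^{2}+b^{2}}>0$ and $\theta=\arg(a+ib)$, and since $(cJ)^{k}(M)=c^{k}J^{k}(M)=J^{k}(M)$ as subspaces for every $c\neq0$, strong $2$-supercyclicity of $J$ is equivalent to that of $\lambda^{-1}J=\left(\begin{smallmatrix}R_{\theta}&R_{\theta}\\ 0&R_{\theta}\end{smallmatrix}\right)$; the latter matrix is not $2$-supercyclic by Proposition~\ref{propjordpas2}, so $J$ is not strongly $2$-supercyclic.

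For $S=\lambda_{1}R_{\theta_{1}}\oplus\lambda_{2}R_{\theta_{2}}$ (with $\lambda_{1}R_{\theta_{1}}=A$, $\lambda_{2}R_{\theta_{2}}=B$, $\lambda_{i}\ge0$): if $\lambda_{1}\lambda_{2}=0$ then $S$ is not bijective, so assume $\lambda_{1},\lambda_{2}>0$, and after exchanging the two $\R^{2}$-blocks (a conjugation, which preserves strong $2$-supercyclicity) and rescaling, assume $0<\lambda_{1}\le\lambda_{2}=1$. Suppose for a contradiction that a $2$-dimensional subspace $M\subset\R^{4}=\R^{2}\times\R^{2}$ is strongly $2$-supercyclic for $S$, and let $p'$ and $p''$ be the projections onto the two factors. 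The plan is to split according to $\dim p''(M)\in\{0,1,2\}$. If $\dim p''(M)=0$, then $M\subset\R^{2}\times\{0\}$, so $M=\R^{2}\times\{0\}$ since $\dim M=2$, and $S^{i}(M)=M$ for every $i$ because $\lambda_{1}^{i}R_{\theta_{1}}^{i}$ is invertible; this contradicts density of $\{S^{i}(M)\}$ in $\Pn_{2}(\R^{4})$.

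If $\dim p''(M)=1$, write $p''(M)=\R\eta$ with $\eta\neq0$; then every vector of $M$ has $p''$-component in $\R\eta$, hence every vector of $S^{i}(M)$ has $p''$-component in the line $\R R_{\theta_{2}}^{i}\eta$, so any two vectors of $S^{i}(M)$ have colinear $p''$-components. Pick $\varepsilon<\tfrac12$ and the nonempty open sets $U_{1}=\R^{2}\times B\big((1,0),\varepsilon\big)$ and $U_{2}=\R^{2}\times B\big((0,1),\varepsilon\big)$ in $\R^{4}$: no line through the origin in $\R^{2}$ meets both balls, so $\bigcup_{i}\big(S^{i}(M)\big)^{2}$ misses $U_{1}\times U_{2}$, contradicting Proposition~\ref{propbeq}(iii). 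Finally, if $\dim p''(M)=2$ then $p''|_{M}$ is an isomorphism, so $M=\{(\phi u,u):u\in\R^{2}\}$ for some linear map $\phi$; for $(v,w)\in S^{i}(M)$ we have $w=R_{\theta_{2}}^{i}u$ and $v=\lambda_{1}^{i}R_{\theta_{1}}^{i}\phi u$, whence $\|v\|=\lambda_{1}^{i}\|\phi u\|\le\|\phi\|_{\mathrm{op}}\|w\|$ because $\lambda_{1}^{i}\le1$ and $\|w\|=\|u\|$. Thus $\Oc(M,S)$ is contained in the proper closed cone $\{(v,w):\|v\|\le\|\phi\|_{\mathrm{op}}\|w\|\}$, so $M$ is not even $2$-supercyclic — a contradiction. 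This exhausts the cases, so $S$ is not strongly $2$-supercyclic.

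The hard part is the regime $\lambda_{1}\neq\lambda_{2}$, where $S$ genuinely can be $2$-supercyclic — a subspace with $\dim p''(M)=1$ and $\dim p'(M)=2$ is $2$-supercyclic as soon as $\{\pi,\theta_{1},\theta_{2}\}$ is independent over $\Q$ — so one cannot merely invoke the $n$-supercyclicity results of Section~2 and must use the reformulation of strong $2$-supercyclicity from Proposition~\ref{propbeq}(iii). The recurring mechanism, already present in the proofs of Proposition~\ref{propjordpas2} and Corollary~\ref{cordoublerot}, is to produce two open sets that no single iterate $S^{i}(M)$ can meet simultaneously, because the relevant components of its vectors would have to be colinear; incidentally, the case $\lambda_{1}=\lambda_{2}=1$ above re-proves Corollary~\ref{cordoublerot}.
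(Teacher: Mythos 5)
Your proof is correct, and its overall architecture for $A\oplus B$ is close to the paper's: reduce to $\lambda_1 R_{\theta_1}\oplus\lambda_2 R_{\theta_2}$ with the second block an isometry, then split on the rank of one of the two projections of $M$, use the ``no line through the origin meets both open sets'' trick (via Proposition~\ref{propbeq}(iii)) when that rank is $\le 1$, and a boundedness/cone constraint when the rank is $2$. The genuine difference is in how the rank-$2$ case is closed. The paper first uses Corollary~\ref{cordoublerot} (and hence the full characterisation of Proposition~\ref{propesp2sup}) to dispose of the case $c^2+d^2=1$, and only for $c^2+d^2<1$ argues via the bounded sets $\Gamma$ and $\Omega$ and the invariance $C^{-i}(U_2)\subseteq U_2$. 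You instead observe, by writing $M$ as the graph of a linear map $\phi$ over the isometric factor, that the entire orbit sits in the proper closed cone $\{\lVert v\rVert\le\lVert\phi\rVert_{\mathrm{op}}\lVert w\rVert\}$, which rules out even $2$-supercyclicity and works uniformly for $\lambda_1\le\lambda_2=1$ --- including $\lambda_1=\lambda_2$. This buys you a self-contained argument that does not invoke Corollary~\ref{cordoublerot} or Proposition~\ref{propesp2sup}, and it also makes explicit the preliminary step that strong $n$-supercyclicity implies $n$-supercyclicity (by projecting the dense set $\bigcup_i(T^i(L))^n\subset X^n$ onto one coordinate), which the paper uses silently for the block $\left(\begin{smallmatrix}A&A\\0&A\end{smallmatrix}\right)$. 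The one weak spot is the parenthetical claim near the end that, for $\lambda_1\neq\lambda_2$, any subspace with $\dim p''(M)=1$ and $\dim p'(M)=2$ is $2$-supercyclic once $\{\pi,\theta_1,\theta_2\}$ is $\Q$-independent: this needs a bit more care (one has to win a simultaneous Kronecker approximation with signs), but it is an aside and does not affect the proof.
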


\begin{proof}
Without loss of generality, one may assume $ \left(\begin{array}{c}a\\b\\\end{array}\right)\neq\left(\begin{array}{c}0\\0\\\end{array}\right)$ and $ \left(\begin{array}{c}c\\d\\\end{array}\right)\neq\left(\begin{array}{c}0\\0\\\end{array}\right)$ because strongly 2-supercyclic operators have dense range.
\\If $R=\left(\begin{array}{cc}
\multicolumn{1}{c|}{A}  &A \\ \cline{1-2}
0 &\multicolumn{1}{|c}{A} \\
\end{array}\right)$, then $R$ is not strongly 2-supercyclic according to Proposition \ref{propjordpas2} .
\\If $R=\left(\begin{array}{ccc}
\multicolumn{1}{c|}{A}   &0 \\ \cline{1-2}
0 &\multicolumn{1}{|c}{B} \\
\end{array}\right)$, we assume, in order to obtain a contradiction, that $R$ is strongly $2$-supercyclic.
Upon considering a scalar multiple, relabelling and rearranging blocks, one can suppose: $R=\left(\begin{array}{cc}
\multicolumn{1}{c|}{R_{\theta}}  &0 \\ \cline{1-2}
0 &\multicolumn{1}{|c}{C} \\
\end{array}\right)$ with $C=\left(\begin{array}{cc}
c  &-d \\
d &c \\
\end{array}\right)$ and $c^{2}+d^{2}\leq1$.
Let $M=\Span\{x,y\}$ be a strongly $2$-supercyclic subspace for $R$. 
\\If $c^{2}+d^{2}=1$, then Corollary \ref{cordoublerot} implies that $R$ is not strongly $2$-supercyclic.
\\If $c^{2}+d^{2}<1$,
since $M$ is strongly $2$-supercyclic for $R$ and using Proposition \ref{propbeq} then for any non-empty open sets $U_{1},U_{2},V_{1},V_{2}$ in $\mathbb{R}^{2}$, there exist $i\in\mathbb{N}$ and $(\lambda,\mu,\alpha,\beta)\in\mathbb{R}^{4}$ such that:
$$\begin{aligned}
\begin{cases}
R^{i}(\lambda x+\mu y)\in U_{1}\times U_{2}\\
R^{i}(\alpha x+\beta y)\in V_{1}\times V_{2}\\
\end{cases}
\Leftrightarrow
&\begin{cases}
\lambda \left(\begin{array}{c}x_{1}\\x_{2}\\\end{array}\right)+\mu \left(\begin{array}{c}y_{1}\\y_{2}\\\end{array}\right)\in R^{-i}_{\theta}(U_{1})\\
\alpha \left(\begin{array}{c}x_{1}\\x_{2}\\\end{array}\right)+\beta \left(\begin{array}{c}y_{1}\\y_{2}\\\end{array}\right)\in R^{-i}_{\theta}(V_{1})\\
\lambda \left(\begin{array}{c}x_{3}\\x_{4}\\\end{array}\right)+\mu \left(\begin{array}{c}y_{3}\\y_{4}\\\end{array}\right)\in C^{-i}(U_{2})\\
\alpha \left(\begin{array}{c}x_{3}\\x_{4}\\\end{array}\right)+\beta \left(\begin{array}{c}y_{3}\\y_{4}\\\end{array}\right)\in C^{-i}(V_{2})\\
\end{cases}
\end{aligned}$$
From this last identity, we deduce that the family $\left\{\left(\begin{array}{c}x_{1}\\x_{2}\\\end{array}\right),\left(\begin{array}{c}y_{1}\\y_{2}\\\end{array}\right)\right\}$ is linearly independent. Indeed, if not, then we choose $U_{1}$ and $V_{1}$ such that there does not exist a straight line passing through the origin and intersecting both $U_1,V_1$ to obtain a contradiction with the previous identity.
\\The family being linearly independent, let $U_{1}=\{z\in\R^2: \Vert z\Vert<1\}$ and $U_{2}=\{z\in\R^2: \Vert z\Vert>t\}$.
Set also $$\Gamma=\left\{(\lambda,\mu)\in\mathbb{R}^{2}:\lambda \left(\begin{array}{c}x_{1}\\x_{2}\\\end{array}\right)+\mu \left(\begin{array}{c}y_{1}\\y_{2}\\\end{array}\right)\in U_1\right\}.$$
Since the family $\left\{\left(\begin{array}{c}x_{1}\\x_{2}\\\end{array}\right),\left(\begin{array}{c}y_{1}\\y_{2}\\\end{array}\right)\right\}$ is linearly independent and $U_1$ is a bounded set, one may deduce that $\Gamma$ is bounded in $\mathbb{R}^{2}$ and then $\Omega=\left\{\lambda \left(\begin{array}{c}x_{3}\\x_{4}\\\end{array}\right)+\mu \left(\begin{array}{c}y_{3}\\y_{4}\\\end{array}\right), (\lambda,\mu)\in\Gamma\right\}$ is obviously bounded too and we define $t$ as an upper bound for $\Omega$.\\
On the other hand, we have that $C^{-1}=\frac{1}{c^{2}+d^{2}}\left(\begin{array}{cc}
c  &d \\ 
-d &c \\
\end{array}\right)=\frac{1}{\sqrt{c^{2}+d^{2}}}R_{\varphi}$, thus $C^{-i}(U_2)\subseteq U_2$ for any $i\in\N$. Hence, $\Omega\cap\cup_{i\in\N}C^{-i}(U_2)=\emptyset$, contradicting the strong 2-supercyclicity of $R$.
\end{proof}

\subsection{General result}
We are going to prove that there is no non-trivial strongly $n$-supercyclic operator by induction on the space dimension. The following proposition inspired from Bourdon, Feldman, Shapiro \cite{Bou} is useful for the induction step:

\begin{prop}\label{propHR}
Let $X$ be a Hausdorff topological vector space. Let also $T:X\to X$ be a bounded operator and $K$ a closed invariant subspace for $T$. If $T$ is strongly $n$-supercyclic then the quotient map $T_{K}:\frac{X}{K}\to \frac{X}{K}$ is strongly $n$-supercyclic too.
\end{prop}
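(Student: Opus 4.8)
The plan is to push the strongly $n$-supercyclic subspace of $T$ forward through the canonical projection $q\colon X\to X/K$. Since $K$ is closed, $X/K$ is again a Hausdorff topological vector space and $q$ is a continuous, open, linear surjection; since $K$ is $T$-invariant, $T_K$ is well defined and $q\circ T=T_K\circ q$, hence $q\circ T^k=T_K^k\circ q$ for every $k\in\Z_+$. I would first dispose of the degenerate cases: if $\dim X=n$ then $K=\{0\}$, $T_K=T$ and there is nothing to do, so assume $\dim X>n$; and assume $\dim(X/K)\ge n$, since otherwise $\Pn_n(X/K)=\emptyset$.

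The first real step is an auxiliary fact about Grassmannians. Set $\mathcal U:=\{L\in\Pn_n(X):L\cap K=\{0\}\}$, the set of $n$-dimensional subspaces on which $q$ is injective. I would check that $\mathcal U$ is open and nonempty: its preimage under $\pi_n$ is the set of tuples $(x_1,\dots,x_n)\in X_n$ whose images $q(x_1),\dots,q(x_n)$ are linearly independent, which is open by continuity of $q\times\cdots\times q$, and since $\pi_n$ is an open continuous surjection it is a quotient map, so $\mathcal U$ is open; nonemptiness follows by lifting a linearly independent $n$-tuple from $X/K$. Then I would define $\hat q\colon\mathcal U\to\Pn_n(X/K)$ by $\hat q(L)=q(L)$: it is well defined (on $L\in\mathcal U$ the map $q$ is injective, so $q(L)$ is $n$-dimensional), surjective (lift a basis), and satisfies $\hat q\circ\pi_n=\pi_n'\circ(q\times\cdots\times q)$ on $\pi_n^{-1}(\mathcal U)$, where $\pi_n'\colon(X/K)_n\to\Pn_n(X/K)$ is the map analogous to $\pi_n$; since $\pi_n$ restricted to the open saturated set $\pi_n^{-1}(\mathcal U)$ is still a quotient map, this identity forces $\hat q$ to be continuous.

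The transfer step then goes as follows. Let $M$ be a strongly $n$-supercyclic subspace for $T$ and put $K_k:=T^k(M)\cap K$. Each $T^k(M)$ is $n$-dimensional by hypothesis, so $T^k|_M\colon M\to T^k(M)$ and $T|_{T^k(M)}\colon T^k(M)\to T^{k+1}(M)$ are bijections; with $TK\subseteq K$ this yields $T(K_k)\subseteq K_{k+1}$ and $T$ injective on $K_k$, so $\dim K_k\le\dim K_{k+1}$. The sequence $(\dim K_k)_k$ is therefore nondecreasing and bounded by $n$, hence eventually equal to some $d$. Now $\{T^k(M):k\in\Z_+\}$ is dense in $\Pn_n(X)$, which has no isolated points since $\dim X>n$, and $\mathcal U$ is open and nonempty, so infinitely many $T^k(M)$ lie in $\mathcal U$; but $T^k(M)\in\mathcal U$ is equivalent to $\dim K_k=0$, which for large $k$ equals $d$, forcing $d=0$ and hence, by monotonicity, $\dim K_k=0$ for every $k$. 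Thus every $T^k(M)$ belongs to $\mathcal U$, so $\bar M:=\hat q(M)=q(M)$ is an $n$-dimensional subspace of $X/K$, and $T_K^k(\bar M)=q(T^k(M))=\hat q(T^k(M))$ is $n$-dimensional for every $k$. Finally $\{T^k(M):k\in\Z_+\}$ is contained in $\mathcal U$ and dense in $\Pn_n(X)$, hence dense in the open set $\mathcal U$, so its image under the continuous surjection $\hat q$, which is $\{T_K^k(\bar M):k\in\Z_+\}$, is dense in $\Pn_n(X/K)$. Hence $\bar M$ is a strongly $n$-supercyclic subspace for $T_K$.

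The point that needs the most care — and the main obstacle — is the step that the quotient does not drop dimension along the orbit, i.e. $T^k(M)\cap K=\{0\}$ for all $k$: one might expect to need $T$ invertible here (which does hold in the finite-dimensional applications, by the Remark preceding the proposition), but the monotonicity of $k\mapsto\dim(T^k(M)\cap K)$, which is itself a consequence of each $T^k(M)$ being $n$-dimensional, combined with density of the orbit and openness of $\mathcal U$, takes its place. The remaining technical ingredient, the openness of $\mathcal U$ and the continuity of $\hat q$, is a routine unwinding of the quotient topology defining $\Pn_n$.
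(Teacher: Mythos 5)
Your proof is correct. The paper itself merely asserts that the statement is ``just a verification using the characterisation of strong $n$-supercyclicity given in Proposition~\ref{propbeq}'' and supplies no further details, so there is no written argument to compare against; but the route hinted at there (push density of $\bigcup_i (T^i(M))^n$ in $X^n$ through $q\times\cdots\times q$) and yours (push density of $\{T^i(M)\}_{i}$ in $\Pn_n(X)$ through the induced map $\hat q$ defined on the open set $\mathcal U$ of $n$-planes transverse to $K$) are close in spirit. Both must confront the one genuinely nontrivial point, which you correctly isolate: one has to show $T^k(M)\cap K=\{0\}$ for \emph{every} $k$, so that the quotiented orbit really lies in $\Pn_n(X/K)$. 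Your monotonicity observation---$k\mapsto\dim(T^k(M)\cap K)$ is nondecreasing because $T$ is injective on each $n$-dimensional $T^k(M)$ and maps $K$ into $K$---combined with openness of $\mathcal U$ and density of the orbit (together with the absence of isolated points in $\Pn_n(X)$ when $\dim X>n$) settles this without assuming $T$ invertible. In the paper's finite-dimensional applications $T$ is in fact invertible with $T(K)=K$, so $\dim(T^k(M)\cap K)$ is constant along the orbit and the check is immediate; your version is the one that actually covers the full generality of the stated hypotheses. You also rightly flag the implicit assumption $\dim(X/K)\ge n$, without which the conclusion fails.
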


The proof is just a verification using the characterisation of strong $n$-supercyclicity given in Proposition \ref{propbeq}.

\begin{theo}\label{theononexistencedimfinie}

For any $N\geq3$, there is no strongly $n$-supercyclic operator on $\mathbb{R}^{N}$ for $1\leq n<N$.
\end{theo}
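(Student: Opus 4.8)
The plan is to argue by induction on the dimension $N\geq 3$, using $N=3$ and $N=4$ as base cases and reducing every larger dimension to a strictly smaller one by passing to a quotient.

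I would first settle the base cases. For $N=3$ one only has to treat $n=1$ and $n=2$: a strongly $1$-supercyclic operator is in particular supercyclic, which is impossible on $\mathbb{R}^{3}$ by Herzog's theorem \cite{Herzog}, and the case $n=2$ then follows from $n=1$ by the duality of Proposition \ref{propfortedualite}. For $N=4$ the cases $n=1$ and $n=3$ are disposed of the same way (Herzog for $n=1$, duality for $n=3$), so the core of the base step is $n=2$. Here I would first recall that a strongly $n$-supercyclic operator on $\mathbb{R}^{N}$ is invertible (the remark following Proposition \ref{propbeq}) and that strong $n$-supercyclicity is unchanged when $T$ is replaced by a nonzero scalar multiple (the orbit of a subspace is unchanged). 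Inspecting the real Jordan form of such a $T$ on $\mathbb{R}^{4}$, three cases occur. If $T$ has a real eigenvalue it has a $T$-invariant line $K$, and Proposition \ref{propHR} shows the induced operator on $\mathbb{R}^{4}/K\cong\mathbb{R}^{3}$ is strongly $2$-supercyclic, contradicting the case $N=3$. Otherwise $T$ has only non-real eigenvalues, so up to a scalar multiple and a change of basis it is either $\left(\begin{smallmatrix}A&0\\0&B\end{smallmatrix}\right)$ or $\left(\begin{smallmatrix}A&A\\0&A\end{smallmatrix}\right)$ with $A,B$ of the shape in Proposition \ref{propinitrec4}, and that proposition (which rests on Propositions \ref{propjordpas2} and \ref{propesp2sup} and Corollary \ref{cordoublerot}) finishes the argument.

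For the inductive step, fix $N\geq 5$ and assume the statement for all dimensions $3,\ldots,N-1$. Suppose, for contradiction, that $T$ on $\mathbb{R}^{N}$ is strongly $n$-supercyclic with $1\leq n<N$. Then $T$ is invertible, so by Proposition \ref{propfortedualite} the operator $(T^{-1})^{\ast}$ is strongly $(N-n)$-supercyclic; replacing $T$ by $(T^{-1})^{\ast}$ if necessary, we may assume $n\leq\lfloor N/2\rfloor$. As a real linear map, $T$ has a $T$-invariant subspace $K$ with $\dim K\in\{1,2\}$ (the line spanned by a real eigenvector, or the plane spanned by the real and imaginary parts of a complex eigenvector, which is genuinely $2$-dimensional when the eigenvalue is non-real). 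By Proposition \ref{propHR} the quotient $T_{K}$ on $\mathbb{R}^{N}/K\cong\mathbb{R}^{N-\dim K}$ is again strongly $n$-supercyclic. Since $N\geq 5$ we have $N-\dim K\geq N-2\geq 3$ and $n\leq\lfloor N/2\rfloor<N-2\leq N-\dim K$, so $T_{K}$ contradicts the induction hypothesis. This completes the induction.

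The genuinely delicate point is the base case $N=4$, $n=2$: it is the one dimension/codimension pair that neither the duality of Proposition \ref{propfortedualite} nor a quotient map can reduce further, which is exactly why the preparatory Propositions \ref{propesp2sup} and \ref{propinitrec4} and Corollary \ref{cordoublerot} were needed. Apart from that, the only thing to handle with care is the strict inequality $n<N-\dim K$: it is what forces the induction to start at $N=5$ and to keep both $N=3$ and $N=4$ as base cases, since an operator with no real eigenvalue may offer only a $2$-dimensional invariant subspace, so in dimension $N$ the quotient can be forced down to dimension $N-2$.
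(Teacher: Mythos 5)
Your proposal is correct and follows essentially the same strategy as the paper: an induction on the dimension with base cases $N=3$ (Herzog plus the duality of Proposition~\ref{propfortedualite}) and $N=4$ (Corollary~\ref{cordoublerot} and Proposition~\ref{propinitrec4} for $n=2$, Herzog/duality for $n=1,3$), and an inductive step that quotients by a $1$- or $2$-dimensional $T$-invariant subspace via Proposition~\ref{propHR}. The only cosmetic difference is that you normalise $n\le\lfloor N/2\rfloor$ by duality inside each inductive step and then invoke the induction hypothesis directly, whereas the paper first reduces to $N$ even and $n=N/2$ via Corollary~\ref{corfortedualite} before quotienting; the underlying tools are identical.
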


\begin{proof}
We give a proof by induction on the space dimension.
First, according to Corollary \ref{corfortedualite} and Herzog's result \cite{Hilwal}, there is no strongly $n$-supercyclic operator on $\mathbb{R}^{3}$ with $n=1,2$.
\\So, let $N\geq 4$.
We want to prove that there is no strongly $n$-supercyclic operator on $\R^N$ for $n<N$.
By Corollary \ref{corfortedualite}, we can suppose that $N\geq4$ is even and $n=\frac{N}{2}$. Then, let also $R$ be an operator on $\mathbb{R}^{N}$, using the Jordan real decomposition one may suppose:
$$R=\left(\begin{array}{ccccccccc}
\cline{1-1}
\multicolumn{1}{|c|}{J_{1}}&0&\cdots&\cdots&\cdots&0 \\ \cline{1-1}
0 &\ddots&0&\cdots&\cdots&0 \\ \cline{3-3}
0 &0&\multicolumn{1}{|c|}{J_{q}}&0&\cdots&0 \\ \cline{3-4}
0 &\cdots&0&\multicolumn{1}{|c|}{\mathcal{J}_{1}}&0&0 \\ \cline{4-4}
0 &\cdots&\cdots&0&\ddots&0 \\ \cline{6-6}
0 &0&0&0&0&\multicolumn{1}{|c|}{\mathcal{J}_{r}} \\ \cline{6-6}
\end{array}\right)$$
where $J_{i}$ is a classical Jordan block:
$$J_{i}=\left(\begin{array}{ccccc}
\mu_{i}   &\mu_{i}&0  &0 \\
0&\ddots &\ddots&\cdots\\
\vdots&0&\ddots&\mu_{i} \\
0 & \cdots      &0&\mu_{i} \\
\end{array}\right)$$
and $\mathcal{J}_{i}$ is a real Jordan block:
$$\mathcal{J}_{i}=\left(\begin{array}{cccc}
\multicolumn{1}{c|}{\mathcal{A}_{i}}   &\multicolumn{1}{|c|}{\mathcal{A}_{i}}&0  &0 \\ \cline{1-2}
0&\ddots &\ddots&\cdots\\ \cline{4-4}
\vdots&0&\ddots&\multicolumn{1}{|c}{\mathcal{A}_{i}} \\ \cline{4-4}
0 & \cdots      &0&\multicolumn{1}{|c}{\mathcal{A}_{i}} \\
\end{array}\right).$$
By contradiction, assume that $M$ is a strongly $\frac{N}{2}$-supercyclic subspace for $R$.
Then we have to consider two different cases, either $q=0$ or $q\neq 0$.

If $q\neq 0$, then $K=\Span\{(1,0,\ldots,0)\}$ is $R$-invariant. Consider then the quotient $\frac{\R^N}{K}$ and apply Proposition \ref{propHR} to deduce that $R_{K}$ is strongly $\frac{N}{2}$-supercyclic on $\mathbb{R}^{N-1}$. In addition, as $N\geq4$ then the following inequalities hold $1\leq \frac{N}{2}<N-1$ but this contradicts Corollary \ref{corfortedualite} because $N-1$ is odd.

Then, if $q=0$, there are two cases, either $N=4$ or $N\geq6$.
\\If $N=4$, then Corollary \ref{cordoublerot} and Proposition \ref{propinitrec4} implies that $R$ is not strongly $2$-supercyclic.
\\If $N\geq6$, notice that $K=\Span\{(1,0,\ldots,0),(0,1,0,\ldots,0)\}$ is $R$-invariant. One may consider the quotient by $K$, and apply Proposition \ref{propHR} providing that $R_{K}$ is strongly $\frac{N}{2}$-supercyclic on $\mathbb{R}^{N-2}$. Moreover, $1\leq\frac{N}{2}\neq \frac{N-2}{2}=\frac{N}{2}-1<N-2$. This contradicts the induction hypothesis.
These contradictions prove that there is no strongly $n$-supercyclic operator on $\R^N$ with $1\leq n<N$.
\end{proof}

\bibliographystyle{plain}

 \scshape

\vglue0.3cm
\hglue0.02\linewidth\begin{minipage}{0.9\linewidth}
Romuald Ernst\\
{Laboratoire de~Math\'ematiques (UMR 6620)}\\
Universit\'e Blaise Pascal\\
Complexe universitaire des C\'ezeaux\\
63177 Aubi\`ere Cedex, France \\
E-mail : \parbox[t]{0.45\linewidth}{\texttt{Romuald.Ernst@math.univ-bpclermont.fr}}
\end{minipage}

\end{document}